\newcommand{\N}{\mathbb{N}}
\newcommand{\symd}{\bigtriangleup}
\DeclarePairedDelimiter\ceil{\lceil}{\rceil}
\DeclarePairedDelimiter\floor{\lfloor}{\rfloor}
\DeclarePairedDelimiter\abs{|}{|}
\DeclarePairedDelimiter\set{\{}{\}}
\DeclarePairedDelimiterX\setof[2]{\{}{\}}{#1\,:\,#2}
\DeclareDocumentCommand{\k}{O{t}}{k^{#1}}
\DeclareMathOperator{\Ts}{T}
\DeclareMathOperator{\CTs}{CT}
\DeclareDocumentCommand{\G}{O{r}O{n}}{G^*_{#2,#1}}
\DeclareDocumentCommand{\Gs}{O{r}O{n}O{\ell}}{G^{#3*}_{#2,#1}}
\DeclareDocumentCommand{\T}{O{n}O{r}}{\Ts_{#2}(#1)}
\DeclareDocumentCommand{\CT}{O{m}O{r}}{\CTs_{#2}(#1)}
\newtheorem{thm}{Theorem}[section]
\newtheorem{lem}[thm]{Lemma}
\newtheorem{cor}[thm]{Corollary}
\newtheorem{prop}[thm]{Proposition}
\theoremstyle{definition}
\newtheorem{defn}[thm]{Definition}
\newtheorem{qu}{Question}
\newtheorem{rem}[thm]{Remark}
\newtheorem{obs}[thm]{Observation}
\newtheorem{case}{Case}
\numberwithin{case}{thm}
\numberwithin{subcase}{case}
\newcommand{\A}{\mathcal{A}}
\newcommand{\cC}{\mathcal{C}}
\DeclareMathOperator{\Ham}{Ham}
\DeclareMathOperator{\ex}{ex}
\DeclareDocumentCommand{\Gell}{O{r}O{n}O{\ell}}{\mathcal{G}_{#2,#1}^{#3}}
\DeclareDocumentCommand{\Hl}{O{r}O{n}O{\ell}}{\mathcal{H}_{#2,#1}^{#3}}
\DeclareDocumentCommand{\Hlhat}{O{r}O{n}O{\ell}}{\widehat{\mathcal{H}}_{#2,#1}^{#3}}
\begin{document}

\pagestyle{plain}

\title{Ore plus Tur\'{a}n}
\author{A. Dawkins and R. Kirsch}
\maketitle
\abstract{
Ore in 1961 determined the maximum number of edges in graphs not containing a Hamiltonian cycle, and Tur\'{a}n in 1941 found the maximum number of edges in graphs not containing a $K_{r+1}$. Motivated by the work of Adamus in 2009 and Ferrero and Lesniak in 2018 on the maximum number of edges in $r$-partite non-Hamiltonian graphs, we find the maximum number of edges in $K_{r+1}$-free non-Hamiltonian graphs. Then we extend this result from Hamiltonicity to traceability, chorded pancyclicity, Hamiltonian-connectedness, $k$-path Hamiltonicity, $k$-Hamiltonicity, $k$-Hamiltonian-connectedness, and $k$-connectedness. Finally we introduce a method for translating results on the maximum number of edges to results on the maximum number of $t$-cliques using the fact that colex Tur\'{a}n graphs are extremal, and thus determine the maximum number of $t$-cliques in each of these classes of graphs.
}

\section{Introduction}\label{sec:intro}

One of the foundational results in extremal graph theory is Tur\'{a}n's theorem, which gives the maximum number of edges in $n$-vertex, $K_{r+1}$-free graphs. We write $e(G)$ for the number of edges in $G$. 
\begin{thm}[Tur\'an \cite{turan}]\label{thm:turan}
    For every $n \ge 1$ and $r \ge 1$, if $G$ is an $n$-vertex, $K_{r+1}$-free graph, then $e(G) \le e(\T)$, where $\T$ is the $n$-vertex complete $r$-partite graph with parts as balanced as possible, and equality holds if and only if $G \cong \T$. 
\end{thm}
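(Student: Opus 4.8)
The plan is to argue by Zykov's symmetrization. First I would dispose of the case $n \le r$: here $\T$ is the complete graph $K_n$, the bound $e(G) \le \binom{n}{2}$ is immediate, and equality clearly forces $G \cong K_n$. So assume $n > r$, and let $G$ be an $n$-vertex, $K_{r+1}$-free graph with the maximum possible number of edges; it suffices to show $G \cong \T$. The crux is the claim that non-adjacency is transitive in $G$: if $u,v,w$ are distinct vertices with $uv \notin E(G)$ and $vw \notin E(G)$, then $uw \notin E(G)$.

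To prove this claim I would use the \emph{cloning} operation: given a vertex $x$ and a vertex $y \ne x$, delete $y$ and introduce a new vertex $x'$ with $N(x') = N_G(x)$. Cloning preserves $K_{r+1}$-freeness, because $x$ and $x'$ are non-adjacent (so no clique uses both) while any clique containing $x'$ but not $x$ becomes, after swapping $x'$ for $x$, a clique of the same size already present in $G$. Now suppose toward a contradiction that $uw \in E(G)$. If $\deg(u) > \deg(v)$, delete $v$ and put a clone of $u$ in its place; the resulting $n$-vertex graph is $K_{r+1}$-free with $e(G) - \deg(v) + \deg(u) > e(G)$ edges, contradicting maximality, and the case $\deg(w) > \deg(v)$ is symmetric. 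Otherwise $\deg(u) \le \deg(v)$ and $\deg(w) \le \deg(v)$; delete both $u$ and $w$ and put a clone of $v$ in each vacated spot. Since $u, w \notin N_G(v)$, the two clones are non-adjacent to each other and to $v$, so the new edge count is $e(G) - \deg(u) - \deg(w) + 1 + 2\deg(v)$, where the $+1$ records the deleted edge $uw$; this is at least $e(G) + 1$, again contradicting maximality. This proves the claim.

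Given the claim, the relation ``equal or non-adjacent in $G$'' is an equivalence relation, so $G$ is complete multipartite with parts equal to its classes, say $P_1, \dots, P_k$. Being $K_{r+1}$-free forces $k \le r$; and since $n > r \ge k$ some part has size at least $2$, so if $k < r$ we could split that part into two nonempty parts, staying complete multipartite with at most $r$ parts but gaining edges and contradicting maximality---hence $k = r$. Finally, writing $e(G) = \binom{n}{2} - \sum_{i=1}^{r} \binom{|P_i|}{2}$, I would observe that the balanced partition is the \emph{unique} minimizer of $\sum_i \binom{|P_i|}{2}$ over partitions of $n$ into $r$ positive parts: if $|P_i| \ge |P_j| + 2$, moving one vertex from $P_i$ to $P_j$ changes this sum by $|P_j| - |P_i| + 1 \le -1$, a strict decrease. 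So an extremal $G$ has all parts differing in size by at most one, i.e.\ $G \cong \T$, which yields both the inequality and the characterization of equality.

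The step I expect to be the main obstacle is getting the symmetrization claim exactly right: verifying that cloning preserves $K_{r+1}$-freeness, and checking that the third case truly gains an edge even though two vertices are removed at once (it is the term $+1$ from the deleted edge $uw$ that makes this work). An alternative, essentially Tur\'an's original argument, is induction on $n$: an edge-maximal $K_{r+1}$-free graph that is not complete contains a copy $A$ of $K_r$; every vertex outside $A$ has at most $r-1$ neighbors in $A$, whence $e(G) \le \binom{r}{2} + (n-r)(r-1) + \ex(n-r, K_{r+1})$, and since deleting one vertex from each part of $\T$ leaves a copy of $\T[n-r]$ one checks this upper bound equals $e(\T)$; tracking equality back through this chain again pins down $G \cong \T$.
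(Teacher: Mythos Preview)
Your proof is correct. The Zykov symmetrization argument is carried out cleanly: the cloning step preserves $K_{r+1}$-freeness for the reason you give, the edge count in the double-cloning case is right (the $+1$ from the deleted edge $uw$ is indeed what makes the strict gain work), and the balancing argument at the end correctly pins down uniqueness.

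However, there is nothing to compare against: the paper does not prove Tur\'an's theorem. It is stated as Theorem~1.1 with a citation to Tur\'an's original paper and used throughout as a black box (for instance in the proofs of Theorems~4.1, 4.2, and 4.4, where it is applied to the subgraph $H = G - V(J)$). So your proposal supplies a proof where the paper deliberately omits one; the Zykov approach you give is standard and would serve perfectly well, as would the inductive alternative you sketch at the end.
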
 The problem of determining the extremal number $\ex(n,F)$, the maximum number of edges in an $n$-vertex, $F$-free graph (where $F$ is a graph not depending on $n$), has been studied extensively \cite{Simonovits1997}. 

Separately, extensive research has been done on minimum degree, $\sigma_2$ (also called Ore-type), and edge density conditions sufficient to guarantee the existence of Hamiltonian cycles and related properties. See for example \cite{Araujo} for a brief history. As opposed to the usual phrasing of Ore's theorem, we choose to think of these edge density conditions from an extremal perspective and phrase the theorems in the form of a maximum number of edges.

\begin{thm}[Ore \cite{OreEdgeCond}]\label{thm:oreham} If $G$ is an $n$-vertex, non-Hamiltonian graph, then $e(G) \le e(K_{n-1}) +1$. For $n \ge 2$, equality holds if and only if $G$ is a $K_{n-1}$ plus a pendant edge, unless $G \cong K_{3,1,1}$.
\end{thm}

\subsection{$r$-Partite to $K_{r+1}$-free}

When considering only bipartite graphs, significantly weaker conditions suffice to guarantee Hamiltonicity and related properties \cite{AdamusBalanced, AdamusUnbalanced, BaggaVarma, MitchemSchmeichel, MoonMoser}. Recent work has considered $n$-vertex, $r$-partite graphs for $2 < r < n$ \cite{Adamus, Araujo, CFGJL, ChenJacobson,DKPT19,DS21,FerreroLesniak}. For example, Adamus determined the maximum number of edges among $n$-vertex balanced tripartite graphs that are not Hamiltonian. In order to state this result, we first define a family of graphs.
\begin{defn}
	For any integers $n \ge 2$ and $r \ge 2$, let $\G$ be the $n$-vertex graph formed from $\T[n-1]$ by adding a new vertex $x$ and an edge $\set{x,y}$, where $y$ is a vertex in a largest part of $\T[n-1]$.
\end{defn}

The graph $\G$ has a vertex $x$ of degree one so is not Hamiltonian.

\begin{thm}[Adamus \cite{Adamus}]\label{thm:adamus}
If $G$ is an $n$-vertex, balanced tripartite graph that is not Hamiltonian, then $e(G) \le e(\G[3])$. Equality holds if $G \cong \G[3]$.
\end{thm}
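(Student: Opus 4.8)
Write $n = 3m$, so that a balanced tripartite graph on $n$ vertices has parts $A,B,C$ of size $m$. I would work throughout with the complement $\overline G \defeq K_{m,m,m} - E(G)$ of $G$ inside the complete tripartite graph, so that $e(G) = 3m^2 - e(\overline G)$ and $d_G(v) = 2m - d_{\overline G}(v)$ for every vertex $v$. Since $\G[3]$ is $K_{m,m,m-1}$ with one added vertex joined to a single vertex in a part of size $m$, we have $e(\G[3]) = 3m^2 - 2m + 1$, so the theorem is equivalent to the statement that every non-Hamiltonian balanced tripartite $G$ on $n$ vertices satisfies $e(\overline G) \ge 2m - 1$, with equality only when $\overline G$ is a star $K_{1,2m-1}$ --- which forces $G \cong \G[3]$. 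The ``equality holds if $G \cong \G[3]$'' direction is immediate, since $\G[3]$ has a vertex of degree $1$. I would then split into two cases according to $\delta(G)$.

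In the case $\delta(G) \le 1$, take $w$ with $d_G(w) \le 1$; then $d_{\overline G}(w) \ge 2m-1$, hence $e(\overline G) \ge 2m-1$ at once. If moreover $e(\overline G) = 2m-1$, then $d_{\overline G}(w) = 2m-1$ and every edge of $\overline G$ is incident with $w$, so $G$ is $K_{m,m,m}$ with the single vertex $w$ joined only to one vertex $y$ of another part; relabelling the parts identifies this graph with $\G[3]$.

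The case $\delta(G) \ge 2$ is where the real content lies. The plan is to show $e(\overline G) \ge 2m$, so that equality in the theorem is impossible in this case; suppose for contradiction that $e(\overline G) \le 2m-1$, and aim to deduce that $G$ is Hamiltonian. I would verify that the degree sequence $d_1 \le \dots \le d_n$ of $G$ satisfies Chv\'atal's condition --- for every $i < n/2$, either $d_i \ge i+1$ or $d_{n-i} \ge n-i$ --- and then invoke Chv\'atal's theorem. The verification should reduce to counting: from $\sum_v d_{\overline G}(v) = 2e(\overline G) \le 4m-2$, the number of vertices with $d_G(v) \le i$, equivalently with $d_{\overline G}(v) \ge 2m-i$, is at most $(4m-2)/(2m-i)$, and a short computation shows this is smaller than $i$ whenever $3 \le i < n/2$ and $m \ge 4$, which gives $d_i \ge i+1$; the case $i=1$ is exactly the standing hypothesis $\delta(G) \ge 2$; and for $i=2$ one notes that two vertices of degree $2$ would contribute at least $2(2m-2)-1 = 4m-5 > 2m-1$ distinct edges to $\overline G$, so $G$ has at most one vertex of degree $2$ and $d_2 \ge 3$.

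Together the two cases would give $e(\overline G) \ge 2m-1$ with equality only for $\G[3]$, which is the theorem. I expect the main obstacle to be the low end of this argument: the Chv\'atal estimate is too weak for the smallest values of $m$, so those graphs have to be checked directly (an honest but finite task), and the $i=2$ sub-case above already hints at the ad hoc accounting this requires; one must also take care that the equality analysis in the first case really produces $\G[3]$ and not merely some graph with a pendant vertex. If the Chv\'atal route proved awkward, I would fall back on the classical approach: reduce to an edge-maximal non-Hamiltonian $G$, use that every missing cross-edge $uv$ yields a Hamiltonian path between $u$ and $v$ in $G$ and hence the Ore bound $d_G(u)+d_G(v) \le n-1$, and then run the usual edge-counting argument --- but controlling how the at most $2m-1$ missing edges distribute among the three bipartite pieces would then become the delicate point.
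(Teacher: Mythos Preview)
The paper does not give a proof of this statement: Theorem~\ref{thm:adamus} is quoted from Adamus's paper \cite{Adamus} as background in the introduction, and nowhere in the present paper is a proof supplied (the paper's own results for $r=3$, e.g.\ Theorem~\ref{theorem:kr+1Ham}, require $n\ge 26$ and concern the broader $K_4$-free class, so they do not recover Adamus's theorem either). So there is no ``paper's proof'' against which to compare your attempt.

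That said, your approach is sound and would go through. Passing to the tripartite complement and reducing to $e(\overline G)\ge 2m-1$ is exactly the right normalisation; the case $\delta(G)\le 1$ is immediate and does pin down $\G[3]$ as you describe. In the case $\delta(G)\ge 2$, your Markov/Chv\'atal count is correct: the inequality $\tfrac{4m-2}{2m-i}<i$ is equivalent to $i^2-2mi+4m-2<0$, which holds for all integers $3\le i\le \lfloor(3m-1)/2\rfloor$ once $m\ge 4$, and your ad~hoc treatment of $i=1,2$ is fine. You are right that $m\in\{1,2,3\}$ must be checked separately; this is genuinely finite. Two small remarks: first, the theorem as stated only asserts ``equality holds \emph{if} $G\cong\G[3]$'', so your uniqueness argument, while correct, is more than is asked; second, your identification of $K_{m,m,m}$ minus a star $K_{1,2m-1}$ with $\G[3]$ is correct once you observe that the pendant vertex can be placed in the part of $T_3(3m-1)$ of size $m-1$, making the result balanced tripartite.
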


Ferrero and Lesniak considered non-Hamiltonian $r$-partite graphs that are not necessarily balanced. They determined the maximum number of edges for such graphs with given part sizes. 
The following corollary of their work generalizes \Cref{thm:adamus}.

\begin{thm}[Corollary of Ferrero and Lesniak \cite{FerreroLesniak}]\label{cor:FL}
For sufficiently large $n$ and $r \ge 3$, if $G$ is an $n$-vertex, $r$-partite graph that is not Hamiltonian, then $e(G) \le e(\G)$. Equality holds if $G \cong \G$.
\end{thm}

\begin{figure}[h!]\begin{center}
		\begin{tabular}{ccc}
			& Unrestricted & Non-Hamiltonian\\
			Unrestricted & \tikzstyle{vx}=[inner sep=1.5pt,circle,fill=black,draw=black]
			\tikzstyle{edge}=[thick]
			\begin{tikzpicture}[scale=.8]
				\foreach \x/\y [count=\i] in {
					3.5/2, 
					3.26188029924677/2.8109612261834, 
					2.62312251950283/3.36444799303178, 
					1.78652774259007/3.4847321628214, 
					1.01770889908207/3.13362436153139, 
					0.560760539578254/2.42259883526214, 
					0.560760539578254/1.57740116473786, 
					1.01770889908207/0.866375638468613, 
					1.78652774259007/0.515267837178601, 
					2.62312251950283/0.635552006968222, 
					3.261880299246/1.1890387738166
				}
				{
					\node[vx] (V\i) at (\x,\y) {};
				}
				\foreach \i in {1,2,3,4,5,6,7,8,9,10,11}
				{
					\foreach \j in {1,2,3,4,5,6,7,8,9,10,11}
					{
						\ifthenelse{\i=\j}{}{\draw[edge] (V\i)--(V\j)};
					}
				}
			\end{tikzpicture} & \tikzstyle{vx}=[inner sep=1.5pt,circle,fill=black,draw=black]
			\tikzstyle{edge}=[thick]
			\begin{tikzpicture}[scale=.8]
				\foreach \x/\y [count=\i] in {
					3.5/2, 
					3.26188029924677/2.8109612261834, 
					2.62312251950283/3.36444799303178, 
					1.78652774259007/3.4847321628214, 
					1.01770889908207/3.13362436153139, 
					0.560760539578254/2.42259883526214, 
					0.560760539578254/1.57740116473786, 
					1.01770889908207/0.866375638468613, 
					1.78652774259007/0.515267837178601, 
					2.62312251950283/0.635552006968222, 
					3.5/1.1890387738166
				}
				{
					\node[vx] (V\i) at (\x,\y) {};
				}
				\foreach \i in {1,2,3,4,5,6,7,8,9,10}
				{
					\foreach \j in {1,2,3,4,5,6,7,8,9,10}
					{
						\ifthenelse{\i=\j}{}{\draw[edge] (V\i)--(V\j)};
					}
				}
				\draw[edge] (V11)--(V1);
			\end{tikzpicture}\\
			&&	Ore \cite{OreEdgeCond}\\
			$K_{r+1}$-free & \tikzstyle{vx}=[inner sep=1.5pt,circle,fill=black,draw=black]
			\tikzstyle{edge}=[thick]
			\begin{tikzpicture}[scale=.8]
				\draw[rotate around={-45:(1,3)}] (1,3) ellipse [x radius=.5, y radius=.75];
				\draw[rotate around={45:(1,1)}] (1,1) ellipse [x radius=.5, y radius=.75];
				\draw[rotate around={45:(3,3)}] (3,3) ellipse [x radius=.5, y radius=.75];
				\draw[rotate around={-45:(3,1)}] (3,1) ellipse [x radius=.5, y radius=.75];
				\foreach \x/\y [count=\i] in {2.75/3.25, 3/3, 3.25/2.75, 3.25/1.25, 3/1, 2.75/.75, 1.25/3.25, 1/3, .75/2.75, 1.25/.75, .75/1.25}
				{
					\node[vx] (V\i) at (\x,\y) {};
				}
				\foreach \i in {1,2,3}
				{
					\foreach \j in {4,5,6,7,8,9,10,11}
					{
						\ifthenelse{\i=\j}{}{\draw[edge] (V\i)--(V\j)};
					}
				}
				\foreach \i in {4,5,6}
				{
					\foreach \j in {7,8,9,10,11}
					{
						\ifthenelse{\i=\j}{}{\draw[edge] (V\i)--(V\j)};
					}
				}
				\foreach \i in {7,8,9}
				{
					\foreach \j in {10,11}
					{
						\ifthenelse{\i=\j}{}{\draw[edge] (V\i)--(V\j)};
					}
				}
			\end{tikzpicture} & \tikzstyle{vx}=[inner sep=1.5pt,circle,fill=black,draw=black]
			\tikzstyle{edge}=[thick]
			\begin{tikzpicture}[scale=.8]
				\draw[rotate around={-45:(1,3)}] (1,3) ellipse [x radius=.5, y radius=.75];
				\draw[rotate around={45:(1,1)}] (1,1) ellipse [x radius=.5, y radius=.75];
				\draw[rotate around={45:(3,3)}] (3,3) ellipse [x radius=.5, y radius=.75];
				\draw[rotate around={-45:(3,1)}] (3,1) ellipse [x radius=.5, y radius=.75];
				\foreach \x/\y [count=\i] in {2.75/3.25, 3/3, 3.25/2.75, 3.4/1.4, 3/1, 2.75/.75, 1.25/3.25, 1/3, .75/2.75, 1.25/.75, .75/1.25}
				{
					\node[vx] (V\i) at (\x,\y) {};
				}
				\foreach \i in {1,2,3}
				{
					\foreach \j in {5,6,7,8,9,10,11}
					{
						\ifthenelse{\i=\j}{}{\draw[edge] (V\i)--(V\j)};
					}
				}
				\foreach \i in {5,6}
				{
					\foreach \j in {7,8,9,10,11}
					{
						\ifthenelse{\i=\j}{}{\draw[edge] (V\i)--(V\j)};
					}
				}
				\foreach \i in {7,8,9}
				{
					\foreach \j in {10,11}
					{
						\ifthenelse{\i=\j}{}{\draw[edge] (V\i)--(V\j)};
					}
				}
				\draw[edge] (V3)--(V4);
			\end{tikzpicture}\\
			& Tur\'an \cite{turan} & $\G$
            
		\end{tabular}
\caption{Extremal graphs for four problems\label{fig:tofl}}
\end{center}
\end{figure}

See \cref{fig:tofl} for examples for $n=11$ and $r=4$ of the extremal graphs for \cref{thm:turan}, \cref{thm:oreham}, and \cref{cor:FL}. 
The extremal graph $\G$ in \cref{cor:FL} is a modification of a Tur\'{a}n graph, and it is a modification of the extremal graph of Ore's theorem (\cref{thm:oreham}) in which the complete graph is replaced by a Tur\'{a}n graph, as happens in many extremal problems where a $K_{r+1}$ is forbidden (e.g. \cite{Frohmader, KR20, K25, MNNRW23, turan}).

Therefore in this paper we relax the $r$-partite condition and focus on extremal problems for graphs that do not contain a $K_{r+1}$. One of our main theorems, \cref{theorem:kr+1all}\ref{part:Ham}, substantially strengthens \cref{cor:FL} by showing that the same conclusion follows from this weaker hypothesis. That is, the graphs $\G$ maximize the number of edges among $n$-vertex, $K_{r+1}$-free, non-Hamiltonian graphs for sufficiently large $n$. 
Moreover, we characterize the extremal graphs.

\vspace{1em}
\noindent\textbf{\cref{theorem:kr+1all}}\emph{\ref{part:Ham}}\textbf{.}
\emph{If $G$ is an $n$-vertex, $K_{r+1}$-free, non-Hamiltonian graph where $r \ge 3$ and 
\[n \ge \begin{cases}26 & \text{if }r=3\\ 11 & \text{if } r=4\\
2 & \text{if } r \ge 5\end{cases},\quad\text{ then }e(G) \leq e(\G) = e(\T[n-1][r]) + 1.\]Equality holds if and only if $G$ is $\T[n-1][r]$ with a pendant edge or $G$ is $K_{3,1,1}$, $K_{6,2,2,1}$, $K_{4,1,1,1}$, or $K_{5,1,1,1,1}$, with the exceptional graphs occurring in the cases where $r \ge 5$ and $n =5$, or $(r,n)$ is $(4,11)$, $(5,7)$, or $(5,9)$, respectively. }

\subsection{Maximum number of edges to maximum number of $t$-cliques}

Zykov \cite{Zykov} in 1949 proved that the Tur\'{a}n graph maximizes the number of $t$-cliques (sets of $t$ vertices that induce a $K_t$) among $n$-vertex, $K_{r+1}$-free graphs. This generalizes Tur\'{a}n's theorem to counting cliques rather than edges. More recently, the generalized Tur\'{a}n problem of determining $\ex(n,H,F)$, the maximum number of copies of $H$ in $n$-vertex, $F$-free graphs, has flourished, especially in the case $H=K_t$. Gerbner and Palmer have recently written an interesting survey covering a wide range of generalized Tur\'{a}n problems, discussing methods and results \cite{GP25}. In 2024 Chakraborti and Chen \cite{ChakrabortiChen} proved, as a consequence of a more general theorem on $\ex(n,K_t,C_{\ge k})$, that the extremal graphs of \cref{thm:oreham} also maximize the number of $t$-cliques in $n$-vertex non-Hamiltonian graphs (see \cref{cor:ore}\ref{part:cororeham}).

When we forbid both $K_{r+1}$ and a Hamiltonian cycle, the graphs $\G$ maximize not only the number of edges but also the number of $t$-cliques. Thus, \cref{fig:tofl} shows the extremal graphs for $t$-cliques as well as edges. 
We write $\k(G)$ for the number of $t$-cliques in $G$.

\vspace{1em}
\noindent\textbf{\cref{cor:cliques}}\emph{\ref{part:Hamclique}}\textbf{.}
\emph{If $G$ is an $n$-vertex, $K_{r+1}$-free, non-Hamiltonian graph where $r \ge 3$ and 
\[n \ge \begin{cases}26 & \text{if }r=3\\ 11 & \text{if } r=4\\
2 & \text{if } r \ge 5\end{cases},\quad\text{ then, for all } t \ge 2, \quad\k(G) \leq \k(\G).\] Equality holds if $G \cong \G$.}
\vspace{1em}

To prove \cref{cor:cliques}\ref{part:Hamclique} we notice that $\G$ is an example of a colex Tur\'{a}n graph, a graph consisting of the first $m$ edges in colexicographic order from an infinite complete $r$-partite graph. We introduce a new application, which may be of independent interest, of a theorem of Frohmader \cite{Frohmader} that relies on the rainbow Kruskal-Katona theorem of Frankl, F\"{u}redi, and Kalai \cite{FFK88} to give the maximum number of $t$-cliques in $m$-edge, $K_{r+1}$-free graphs.

\subsection{Hamiltonicity to related properties}

By generalizing the methods we develop to solve the above problems on Hamiltonicity, we obtain analogous results for the related properties traceability, Hamiltonian-connectedness, $k$-path Hamiltonicity, $k$-Hamiltonicity, $k$-Hamiltonian-connectedness, and $k$-connectedness. (The definitions of all of these properties appear in \cref{subsec:edgedegreestable}.) The maximum numbers of edges in graphs avoiding these properties were known, and are given in \cref{subsubsec:edge}.

We answer in Theorems \ref{theorem:kr+1all} and \ref{theorem:kr+1kpath1} the analogous questions when $K_{r+1}$ is forbidden as a subgraph. We show that in almost all cases the extremal graph is a modification of a Tur\'{a}n graph. Specifically in each part of \cref{thm:ore} and in the analogous theorems for the other properties (see \cref{thm:kstableedge}), the extremal graph contains a complete graph $K_{n-1}$ which when replaced by a Tur\'{a}n graph $\T[n-1][r]$ yields the extremal graph for the corresponding $K_{r+1}$-free problem.

\begin{figure}[h!]\vspace{0pt}\begin{center}
\begin{tabular}{cccc}
    \tikzstyle{vx}=[inner sep=1.5pt,circle,fill=black,draw=black]
			\tikzstyle{edge}=[thick]
			\begin{tikzpicture}[scale=.7]

             \draw[gray!20, line width=8pt] (3,3) -- (1,3);
                    \draw[gray!20, line width=8pt] (3,3) -- (3,1);
                    \draw[gray!20, line width=8pt] (3,3) -- (1,1);
                    \draw[gray!20, line width=8pt] (1,3) -- (3,1);
                    \draw[gray!20, line width=8pt] (1,3) -- (1,1);
                    \draw[gray!20, line width=8pt] (3,1) -- (1,1);
                    
				\draw[rotate around={-45:(1,3)}, fill=white] (1,3) ellipse [x radius=.5, y radius=.9];
				\draw[rotate around={45:(1,1)}, fill=white] (1,1) ellipse [x radius=.5, y radius=.9];
				\draw[rotate around={45:(3,3)}, fill=white] (3,3) ellipse [x radius=.5, y radius=.9];
				\draw[rotate around={-45:(3,1)}, fill=white] (3,1) ellipse [x radius=.5, y radius=.9];

				\foreach \x/\y [count=\i] in {4.5/3}
				{
					\node[vx] (V\i) at (\x,\y) {};
				}
			\end{tikzpicture} 
            & 
			    \tikzstyle{vx}=[inner sep=1.5pt,circle,fill=black,draw=black]
			\tikzstyle{edge}=[thick]
			\begin{tikzpicture}[scale=.7]

            \draw[gray!20, line width=8pt] (1,1) -- (3,1);
                    \draw[gray!20, line width=8pt] (3,3) -- (1,3);
                    \draw[gray!20, line width=8pt] (3,3) -- (3,1);
                    \draw[gray!20, line width=8pt] (3,3) -- (1,1);
                    \draw[gray!20, line width=8pt] (1,3) -- (3,1);
                    \draw[gray!20, line width=8pt] (1,3) -- (1,1);

				\draw[rotate around={-45:(1,3)}, fill=white] (1,3) ellipse [x radius=.5, y radius=.9];
				\draw[rotate around={45:(1,1)}, fill=white] (1,1) ellipse [x radius=.5, y radius=.9];
				\draw[rotate around={45:(3,3)}, fill=white] (3,3) ellipse [x radius=.5, y radius=.9];
				\draw[rotate around={-45:(3,1)}, fill=white] (3,1) ellipse [x radius=.5, y radius=.9];
				\foreach \x/\y [count=\i] in {
                3.35/2.65, 4.5/3}
				{
					\node[vx] (V\i) at (\x,\y) {};
				}
				
				\draw[edge] (V1)--(V2);
			\end{tikzpicture}
			&
            \tikzstyle{vx}=[inner sep=1.5pt,circle,fill=black,draw=black]
			\tikzstyle{edge}=[thick]
			\begin{tikzpicture}[scale=.7]
            \draw[gray!20, line width=8pt] (1,1) -- (1,3);
                    \draw[gray!20, line width=8pt] (1,1) -- (3,1);
                    \draw[gray!20, line width=8pt] (3,3) -- (1,3);
                    \draw[gray!20, line width=8pt] (3,3) -- (3,1);
                    \draw[gray!20, line width=8pt] (3,3) -- (1,1);
                    \draw[gray!20, line width=8pt] (1,3) -- (3,1);

				\draw[rotate around={-45:(1,3)}, fill=white] (1,3) ellipse [x radius=.5, y radius=.9];
				\draw[rotate around={45:(1,1)}, fill=white] (1,1) ellipse [x radius=.5, y radius=.9];
				\draw[rotate around={45:(3,3)}, fill=white] (3,3) ellipse [x radius=.5, y radius=.9];
				\draw[rotate around={-45:(3,1)}, fill=white] (3,1) ellipse [x radius=.5, y radius=.9];
				\foreach \x/\y [count=\i] in {
                3.35/2.65, 4.5/3, 1.3/3.35}
				{
					\node[vx] (V\i) at (\x,\y) {};
				}
				
				\draw[edge] (V1)--(V2);
                    \draw[edge] (V3)--(V2);
			\end{tikzpicture}

            &

            \tikzstyle{vx}=[inner sep=.8pt,circle,fill=black,draw=black]
			\tikzstyle{edge}=[]
                \tikzstyle{tedge}=[draw=gray, very thick]
			\begin{tikzpicture}[scale=.7]

                    \draw[gray!20, line width=8pt] (1,1) -- (1,3);
                    \draw[gray!20, line width=8pt] (1,1) -- (3,1);
                    \draw[gray!20, line width=8pt] (3,3) -- (1,3);
                    \draw[gray!20, line width=8pt] (3,3) -- (3,1);
                    \draw[gray!20, line width=8pt] (3,3) -- (1,1);
                    \draw[gray!20, line width=8pt] (1,3) -- (3,1);
                    
				\draw[rotate around={-45:(1,3)}, fill=white] (1,3) ellipse [x radius=.5, y radius=.9];
				\draw[rotate around={45:(1,1)}, fill=white] (1,1) ellipse [x radius=.5, y radius=.9];
				\draw[rotate around={45:(3,3)}, fill=white] (3,3) ellipse [x radius=.5, y radius=.9];
				\draw[rotate around={-45:(3,1)}, fill=white] (3,1) ellipse [x radius=.5, y radius=.9];
                    \draw[dashed][rotate around={45:(3,3)}] (2.15,3) ellipse [x radius=.75, y radius=1.35] node {$N(v)$};
				{
					\node[vx][label=above:$v$] (v) at (5,3) {};
				}
                \draw[gray, thick] (v) -- (1.9,3.5);
                \draw[gray, thick] (v) -- (3.41,1.5);
                
                \draw[rotate around={-25:(3.5,4.55)}] (4.75,3.75) arc
                    [
                    start angle=185,
                    end angle=275,
                    x radius=.65cm,
                    y radius =.65cm
                    ] node[right] {\small{$\ell+1$}};
			\end{tikzpicture}
\end{tabular}
\caption{Left: For $r=4$, the extremal $n$-vertex, $K_{r+1}$-free graphs which are not traceable, not Hamiltonian, and not Hamiltonian-connected, from left to right: $\Gs[4][n][-1]$, $\Gs[4][n][0]$, and $\Gs[4][n][1]$. Right: A $T_4(n-1)$ plus a vertex $v$ of degree $\ell+1$ whose neighbors lie in at most $3$ parts.\label{fig:extremalgraphs}}
\end{center}
\end{figure}

The extremal graphs are pictured in \cref{fig:extremalgraphs}. For $-1 \le \ell \le \floor{(r-1)n/r}-1$ we write $\Gs[r][n][\ell]$ for the colex Tur\'{a}n graph $\CT$ (see \cref{def:colexturan}), where $m = e(\T[n-1][r]) + \ell+1$. The graph $\Gs[r][n]$ is formed from $\T[n-1]$ by adding a new vertex $x$ such that the neighborhood of $x$ is isomorphic to $\T[\ell+1][r-1]$ and disjoint from a smallest part of the $\T[n-1]$. Notice that $\G = \Gs[r][n][0]$. The graphs $\Gs[r][n][-1]$, $\Gs[r][n][0]$, $\Gs[r][n][1]$, $\Gs[r][n][k]$, $\Gs[r][n][k]$, $\Gs[r][n][k-2]$, $\Gs[r][n][k]$, and $\Gs[r][n][k]$ are extremal for \cref{theorem:kr+1all}\ref{part:trace}--\ref{part:kconn}, \cref{theorem:kr+1kpath1}, and \cref{cor:chord}, respectively.

Because the extremal graphs $\Gs$ are colex Tur\'{a}n graphs, they also maximize the number of $t$-cliques by the same application of Frohmader's theorem we used for Hamiltonicity.

In fact we prove more general statements, \cref{thm:degcondsummary} and \cref{cor:posaclique}, determining the maximum numbers of edges and $t$-cliques in $n$-vertex, $K_{r+1}$-free graphs satisfying a generalized P\'{o}sa-like degree condition (having a fair number of low-degree vertices, as in \cite{Posa62, Posa}). This approach allows us to address the different forbidden properties essentially simultaneously.

\begin{thm}[summary of \cref{thm:degcondsummary} and \cref{cor:posaclique}]
    Let $\ell \ge -1$ and $r \ge 3$, and let $n$ be sufficiently large. Let $G$ be an $n$-vertex, $K_{r+1}$-free graph with degrees $d_1 \le \cdots \le d_n$. If there is an integer $j$ in $1 \le j \le (n-1-\ell)/2$ such that $d_j \le j+\ell$, then, for all $t \ge 2$, \[ e(G) \leq e(\Gs) = e(\T[n-1][r]) + (\ell+1) \quad\text{and}\quad \k(G) \le \k(\Gs).\]
    The bound is tight as equality holds if $G \cong \Gs$, and if equality holds in the edge bound then $G$ is in one of two graph families defined in \cref{sec:graphs}.
\end{thm}

Thus, we generalize \cref{cor:FL} about extremal numbers of edges in $r$-partite non-Hamiltonian graphs to determine extremal numbers of $t$-cliques in $K_{r+1}$-free graphs not having a range of Hamiltonicity-like conditions.

The paper is organized as follows. Preliminary definitions and theorems are stated in \cref{sec:prelim}. In \cref{sec:graphs} we define three families of $n$-vertex, $K_{r+1}$-free graphs, which we later use in characterizing the extremal graphs, and determine which of these graphs have the required Hamiltonicity-like properties. Then in \cref{sec:bounds} we show that a certain degree condition implies the maximum numbers of edges for $r \ge 3$ and narrow down the extremal graphs to two families. \cref{sec:edge} pieces together the degree conditions from \cref{sec:prelim} and the results of Sections \ref{sec:graphs} and \ref{sec:bounds} to determine the maximum numbers of edges and the extremal graphs which attain them for $r \ge 3$. The $r=2$ case behaves differently and is addressed in \cref{sec:r2}. In \cref{sec:clique} we maximize the number of $t$-cliques for all $t \ge 2$. We conclude with open problems in \cref{sec:open}.

\section{Preliminaries}\label{sec:prelim}

In this paper, graphs are simple graphs, having no loops or multiple edges. If $H$ is a subgraph of $G$, we write $\partial_G H$, or $\partial H$ where the graph $G$ is clear, for the set of edges of $G$ having exactly one vertex in $V(H)$. For a vertex $v$ in $G$ and a subgraph $H$ of $G$ we write $d_H(v)$ for the number of neighbors of $v$ in $H$. 

\subsection{Degree and edge conditions for stable properties}\label{subsec:edgedegreestable}

For $s>0$, a property $P$ defined on all $n$-vertex graphs is said to be \emph{$s$-stable} if whenever $G+uv$ has property $P$ (for distinct vertices $u$ and $v$ not adjacent in $G$) and $d_G(u)+d_G(v) \ge s$ then $G$ itself has property $P$. We address all of the following properties in this paper using the fact that they are $s$-stable for some $s=n+\ell$, listed in \cref{table}. 

A graph is \emph{traceable} if it contains a Hamiltonian path, \emph{Hamiltonian} if it contains a Hamiltonian cycle, and \emph{Hamiltonian-connected} if every pair of vertices is connected by a Hamiltonian path. 
An $n$-vertex graph is \emph{$k$-path Hamiltonian} for $0 \le k \le n-2$ if every path of length at most $k$ is contained in a Hamiltonian cycle. 
An $n$-vertex graph is \emph{$k$-Hamiltonian} for $0 \le k \le n-3$ if, for each set $S$ of at most $k$ vertices, $G-S$ is Hamiltonian. 
An $n$-vertex graph is \emph{$k$-Hamiltonian-connected} for $1 \le k \le n-2$ if, for each set $S$ of fewer than $k$ vertices, $G-S$ is Hamiltonian-connected. 
An $n$-vertex graph is \emph{$k$-connected} for $1 \le k \le n-1$ if, for each set $S$ of fewer than $k$ vertices, $G-S$ is connected.

The property of being Hamiltonian is $n$-stable \cite{OreEdgeCond}.
The property of $k$-path Hamiltonicity is $(n+k)$-stable by a proof similar to that of Theorem 1 in \cite{Kronk}. The property of $k$-Hamiltonian-connectedness is $(n+k)$-stable by a proof similar to that of Theorem 3 in \cite{Lick}, using the fact that Hamiltonian-connectedness is $(n+1)$-stable. For the stability of the other properties, see \cite{BC76} (and note that their definition of $k$-Hamilton-connectedness is the one in \cite{Berge}, so the $(n+1)$-stability of Hamiltonian-connectedness is given in \cref{table} under $0$-Hamilton-connectedness).

All of these properties hold for sufficiently large complete graphs. We write $n(P)$ for an integer such that each $K_n$ with $n \ge n(P)$ has property $P$, when such a value exists. 

\begin{table}[h]
\begin{center}
\begin{tabular}{l l l l}
    Property $P$ & $s$ & $\ell$ & $n(P)$\\\hline
    traceability & $n-1$ & $-1$ & $1$\\
    Hamiltonicity & $n$ & $0$ & $3$\\
    Hamiltonian-connectedness & $n+1$ & $1$ & $2$\\
    $k$-path Hamiltonicity & $n+k$ & $k$ & $k+3$\\
    $k$-Hamiltonicity & $n+k$ & $k$ & $k+3$\\
    $k$-Hamiltonian-connectedness & $n+k$ & $k$ & $k+2$\\
    $k$-connectedness & $n+k-2$ & $k-2$ & $k+1$
\end{tabular}
\caption{Each of the listed properties $P$ is $s$-stable for $s=n+\ell$ and holds for all complete graphs $K_n$ with $n \ge n(P)$.}
\label{table}
\end{center}
\end{table}

For all stable properties that hold for sufficiently large complete graphs, Chv\'atal-like degree conditions, edge extremal results, and minimum degree conditions are known.

\subsubsection{Chv\'atal-like degree conditions}

The following theorem can be read out of Bondy and Chv\'atal \cite{BC76}.

\begin{thm}[Bondy and Chv\'atal \cite{BC76}]\label{thm:kstable}
    Let $P$ be an $(n+\ell)$-stable property for which $n(P)$ exists. Let $G$ be a graph with degrees $d_1 \le \cdots \le d_n$ where $n\ge n(P)$. If $G$ does not have $P$, then there is an integer $1 \le i \le (n-1-\ell)/2$ for which $d_i \le i+\ell$ and $d_{n-i-\ell} \le n-i-1$.
\end{thm}

Prior to Bondy and Chv\'atal's 1976 paper \cite{BC76}, Chv\'atal-like degree conditions were proven separately for all of these properties: for traceability, Hamiltonicity, and $k$-Hamiltonicity by Chv\'atal in 1972 \cite{Chvatal}, for Hamiltonian-connectedness by Berge \cite{Berge} and Williamson \cite{Williamson}, for $k$-path Hamiltonicity by Kronk \cite{KronkGen,KronkVar}, for $k$-Hamiltonian-connectedness by Lick \cite{Lick}, and for $k$-connectedness by Bondy \cite{Bondy69} and Boesch \cite{Boesch74}. See \cite{Dawkins} for all of these theorems.

\subsubsection{Edge extremal results}\label{subsubsec:edge}
\cref{thm:kstable} implies an edge extremal theorem for each of the same properties.

\begin{thm}\label{thm:kstableedge}
    Let $P$ be an $(n+\ell)$-stable property for which $n(P)$ exists. If $G$ is an $n$-vertex graph that does not have property $P$, and $n \ge n(P)$, then $e(G) \le e(K_{n-1})+\ell+1$. If the graph consisting of $K_{n-1}$ plus a vertex of degree $\ell+1$ does not have property $P$, then the bound is tight. 
\end{thm}

\begin{proof}
    Let $G$ be a graph with degrees $d_1 \le \cdots \le d_n$. Suppose that $G$ does not have property $P$, so by \cref{thm:kstable} there is an integer $1 \le i \le (n-1-\ell)/2$ for which $d_i \le i+\ell$ and $d_{n-i-\ell} \le n-i-1$. Let $J$ be a pair of vertices of degree $d_{i}$ and $d_{n-i-\ell}$, and let $H=G-V(J)$. Notice 
    \begin{align*}
        n+\ell-1 &\ge \sum_{v\in V(J)} d(v)=2e(J)+|\partial J| \ge e(J)+|\partial J|.
    \end{align*}Then $e(G) = e(J)+|\partial J| + e(H) \le  n+\ell-1+e(K_{n-2})=e(K_{n-1})+\ell+1$.\qedhere
\end{proof}

\begin{rem}\label{rem:colexproperties}
For all properties $P$ in \cref{table} and their corresponding values of $\ell$ from the table, the graph consisting of $K_{n-1}$ plus a vertex of degree $\ell+1$ does not have property $P$, so the bound in \cref{thm:kstable} is tight.
\end{rem}

Prior to the publication of \cite{BC76}, edge extremal theorems were published for traceability, Hamiltonicity, and Hamiltonian-connectedness by Ore in 1961 and 1963 \cite{OreEdgeCond,OreHConnected}, for $k$-path Hamiltonicity by Kronk \cite{Kronk}, for $k$-Hamiltonicity by Chartrand, Kapoor, and Lick \cite{CKL70}, and for $k$-Hamiltonian-connectedness by Lick \cite{Lick}. See \cite{Dawkins} for all of these theorems. Ore additionally characterized all extremal graphs for traceability, Hamiltonicity, and Hamiltonian-connectedness.

\begin{thm}[Ore \cite{OreEdgeCond,OreHConnected}]\label{thm:ore} Let $G$ be a graph on $n$ vertices.
\begin{enumerate}[(a)]
    \item If $G$ is not traceable, then $e(G) \le e(K_{n-1})$. For $n \ge 2$, equality holds if and only if $G$ is a $K_{n-1}$ plus an isolated vertex, with one exceptional graph, $K_{3,1}$, when $n=4$.
    \item If $G$ is not Hamiltonian, then $e(G) \le e(K_{n-1}) +1$. For $n \ge 2$, equality holds if and only if $G$ is a $K_{n-1}$ plus a pendant edge, with one exceptional graph, $K_{3,1,1}$, when $n=5$.
    \item If $G$ is not Hamiltonian-connected, then $e(G) \le  e(K_{n-1}) +2$. For $n \ge 4$, equality holds if and only if $G$ is a $K_{n-1}$ plus a vertex of degree $2$, with one exceptional graph, $K_{3,1,1,1}$, when $n=6$.
\end{enumerate}
\end{thm}

\subsubsection{Minimum degree conditions}

\cref{thm:kstable} also implies the following minimum degree conditions for the same properties, which includes Dirac's minimum degree condition for $P=$ Hamiltonicity \cite{Dirac}.

\begin{thm}\label{thm:stablemindeg}
    Let $P$ be an $(n+\ell)$-stable property for which $n(P)$ exists, and let $G$ be a graph on $n \ge n(P)$ vertices. If $\delta(G) \ge (n+\ell)/2$, then $G$ has property $P$.
\end{thm}

\subsection{Hamiltonicity-like properties in complete multipartite graphs}\label{subsec:CMP}

We use the following three propositions, characterizing which complete multipartite graphs are $k$-Hamiltonian-connected, Hamiltonian, and traceable, respectively.

\begin{prop}\label{prop:partitekHamconn}
    Let $G$ be a complete multipartite graph on $n$ vertices whose largest part has size $m$. 
    Then $G$ is $k$-Hamiltonian-connected if and only if $m \le (n-k)/2$.
\end{prop}

\begin{proof}
   Suppose $G$ has a part of size greater than $(n-k)/2$. Let $G'$ be the result of deleting some set of $k-1$ vertices from the other parts. Then $G'$ is a complete multipartite graph with one part $P$ of size at least $(n-k+1)/2$ and the other parts having at most $(n-k+1)/2$ vertices. A path of length $\ell$ between two vertices outside $P$ contains at most $\ell/2$ vertices in $P$. A Hamiltonian path between these vertices in $G'$ then is impossible because it would have length $n-k$ but contain more than $(n-k)/2$ vertices in $P$. 
    Since $G'$ is not Hamiltonian-connected, $G$ is not $k$-Hamiltonian-connected.

    Conversely, suppose the largest part of $G$ has size $m \le (n-k)/2$. The degree of a vertex in a part of size $p$ is $n-p$, so the minimum degree of $G$ is $\delta(G)=n-m\ge(n+k)/2$. By \cref{thm:stablemindeg} with $P=k$-Hamiltonian-connectedness and \cref{table}, $G$ is $k$-Hamiltonian-connected.
\end{proof}

\begin{prop}\label{prop:partitekHam}
    Let $G$ be a complete multipartite graph on $n$ vertices whose largest part has size $m$.  
    Then $G$ is $k$-Hamiltonian if and only if $m \le (n-k)/2$.
\end{prop}

\begin{proof}
   Suppose $G$ has a part of size greater than $(n-k)/2$. Let $G'$ be the result of deleting some set of $k$ vertices from the other parts. Then $G'$ is a complete multipartite graph with one part of size at least $(n-k+1)/2$ and the other parts having at most $(n-k-1)/2$ vertices. As any cycle in $G$ is a cyclic ordering of vertices of $G$ in which the vertices in the largest part are non-consecutive, we have $G'$ is not Hamiltonian and so, by definition, $G$ is not $k$-Hamiltonian.

   Conversely, suppose the largest part of $G$ has size $m \le (n-k)/2$. The degree of a vertex in a part of size $p$ is $n-p$, so the minimum degree of $G$ is $\delta(G)=n-m\ge(n+k)/2$. This minimum degree condition, by \cref{thm:stablemindeg} with $P=k$-Hamiltonicity and \cref{table}, implies that $G$ is $k$-Hamiltonian.
\end{proof}

\begin{prop}\label{prop:partitetraceable}
    Let $G$ be a complete multipartite graph on $n$ vertices whose largest part has size $m$. Then $G$ is traceable if and only if $m \le (n+1)/2$.
\end{prop}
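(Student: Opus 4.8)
The plan is to mirror the proof of \Cref{prop:partitehamiltonian}, reducing traceability to Hamiltonicity by a vertex-addition trick.

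For the forward direction I would argue by contrapositive. Suppose the largest part $A$ has size $m > (n+1)/2$. If $G$ were traceable, a Hamiltonian path would give an ordering $v_1,v_2,\dots,v_n$ of $V(G)$ in which consecutive vertices are adjacent, hence lie in different parts; in particular no two vertices of $A$ occupy consecutive positions. Thus the set of positions occupied by $A$ is an independent set in the path on $n$ vertices, which has maximum size $\lceil n/2\rceil$. Since $\lceil n/2\rceil \le (n+1)/2 < m$, no such ordering exists, so $G$ is not traceable.

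For the reverse direction, suppose $m \le (n+1)/2$ (and $n \ge 1$, so $m \ge 1$; the cases $n \le 2$ are immediate). Form $G'$ from $G$ by adding one new vertex $x$ joined to every vertex of $G$. Then $G'$ is complete multipartite on $n+1$ vertices, with $\{x\}$ as an additional singleton part, so the largest part of $G'$ still has size $m$. The hypothesis $m \le (n+1)/2$ is exactly the condition of \Cref{prop:partitehamiltonian} guaranteeing that $G'$ is Hamiltonian. Deleting $x$ from a Hamiltonian cycle of $G'$ yields a Hamiltonian path in $G$, so $G$ is traceable.

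I do not expect a real obstacle here. The only points needing care are the integer rounding in the forward direction (checking $\lceil n/2\rceil \le (n+1)/2$, with equality exactly when $n$ is odd, so that an integer $m > (n+1)/2$ indeed exceeds $\lceil n/2\rceil$) and the observation in the reverse direction that adjoining the universal vertex $x$ does not enlarge the largest part, which holds because the new part is a singleton.
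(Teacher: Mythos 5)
Your proof is correct and follows essentially the same route as the paper: the forward direction is the same non-consecutive-positions argument, and the reverse direction is the same join-with-$K_1$ reduction to Hamiltonicity, except that you invoke \cref{prop:partitehamiltonian} on the augmented graph where the paper applies Dirac's theorem directly to the join of $G$ and $K_1$ (an immaterial difference, since \cref{prop:partitehamiltonian} is itself proved via Dirac).
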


\begin{proof}
    If $G$ has a part of size greater than $(n+1)/2$, then $G$ is not traceable because any path in $G$ is an ordering of vertices of $G$ in which the vertices in the largest part are non-consecutive.

    Conversely, suppose the largest part of $G$ has size $m \le (n+1)/2$. The degree of a vertex in a part of size $p$ is $n-p$, so the minimum degree of $G$ is $\delta(G) = n-m \ge (n-1)/2$. By \cref{thm:stablemindeg} with $P=$ traceability and \cref{table}, $G$ is traceable.
\end{proof}

The following lemma is used to prove \cref{prop:kpathham} for characterizing the extremal graphs that are not $k$-path Hamiltonian.

\begin{lem}\label{lem:partite_kpath}
    Let $G$ be a complete multipartite graph on $n$ vertices whose largest part has size $m$. If $m > (n-k)/2$ and the remaining $r-1$ parts of $G$ contain a path of length $k$ then $G$ is not $k$-path Hamiltonian.
\end{lem}

\begin{proof}
    Suppose $G$ has a largest part of size greater than $(n-k)/2$. Let $G'$ be the result of deleting the vertices of a path $P$ of length $k$ from the remaining $r-1$ parts of $G$. Then $G'$ is a complete multipartite graph on $n-k-1$ vertices with largest part of size at least $(n-k+1)/2$. 
    By \cref{prop:partitetraceable}, 
    $G'$ is not traceable. Thus there is no Hamiltonian path in $G'$, so 
    no Hamiltonian cycle in $G$ containing $P$. Therefore $G$ is not $k$-path Hamiltonian.
\end{proof}

\subsection{Comparing numbers of edges}
We frequently use the following fact to compare expressions for numbers of edges.
\begin{prop}\label{floorprop}
    For any real numbers $x$ and $y$, $\floor{x} - \floor{y} > x - 1-y$.
\end{prop}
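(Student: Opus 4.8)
The plan is to prove this via the two standard one-sided bounds on the floor function, applied separately to $x$ and to $y$. Recall that for every real $z$ we have $z - 1 < \floor{z} \le z$. I would use the lower bound on $\floor{x}$, namely $\floor{x} > x - 1$, and the upper bound on $\floor{y}$, namely $\floor{y} \le y$, equivalently $-\floor{y} \ge -y$.

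Adding these two inequalities gives $\floor{x} - \floor{y} > (x-1) + (-y) = x - 1 - y$, which is exactly the claimed inequality. The first inequality is strict, so the sum is strict as well, and no case analysis on the fractional parts of $x$ and $y$ is needed.

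There is no real obstacle here: the only thing to be slightly careful about is making sure the strictness is preserved when combining a strict inequality with a non-strict one, which it is. So the proof is a two-line consequence of the defining properties of $\floor{\cdot}$.
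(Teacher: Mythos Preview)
Your proof is correct and is essentially identical to the paper's own argument, which simply notes that the definition of the floor function gives $x-1<\floor{x}$ and $\floor{y}\le y$, and combines these two facts.
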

\begin{proof}
    The definition of the floor function implies that $x-1< \floor{x}$ and $\floor{y} \le y$.
\end{proof}

Finally, we use the following bounds and exact counts for the numbers of edges in Tur\'{a}n graphs.

\begin{prop}[\cite{Vatter,Lavrov}]\label{prop:turanedgecount}
    Let $n \ge r \ge 1$ and $s = n \mod r$. Then
    \[
        e(\T) = \frac{r-1}{2r}n^2 - \frac{s(r-s)}{2r},
    \]
    \[
       \frac{r-1}{2r}n^2 - \frac{r}{8} \le e(\T) \le \frac{r-1}{2r}n^2,
    \]
    and, for every $1 \le r \le 7$,
    \[
        e(\T) = \floor[\Big]{\frac{r-1}{2r}n^2}.
    \]
\end{prop}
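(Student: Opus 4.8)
### Proof Proposal for Proposition \ref{prop:turanedgecount}

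The plan is to verify the three claims in order, starting from the explicit edge count and deriving the bounds and the floor identity as consequences.

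\textbf{Step 1: The exact formula.} Write $n = qr + s$ where $q = \lfloor n/r \rfloor$ and $0 \le s < r$. Then $\T$ has $s$ parts of size $q+1$ and $r-s$ parts of size $q$. The number of edges is $\binom{n}{2}$ minus the number of non-edges, and the non-edges are exactly the pairs within a common part. Thus
\[
    e(\T) = \binom{n}{2} - s\binom{q+1}{2} - (r-s)\binom{q}{2}.
\]
I would expand this: $s\binom{q+1}{2} + (r-s)\binom{q}{2} = \frac{1}{2}\left(r q^2 + (2s - r)q + \,\text{[lower]}\right)$ — more carefully, $s\binom{q+1}{2}+(r-s)\binom{q}{2} = \binom{q}{2}r + sq = \frac{q(q-1)r}{2} + sq$. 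Now substitute $q = (n-s)/r$ and simplify. After routine algebra this collapses to $\frac{r-1}{2r}n^2 - \frac{s(r-s)}{2r}$; the cross terms in $n$ and the constant terms cancel precisely because $n \equiv s \pmod r$. This is a direct computation with no obstacle beyond bookkeeping.

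\textbf{Step 2: The bounds.} From Step 1, $e(\T) = \frac{r-1}{2r}n^2 - \frac{s(r-s)}{2r}$. Since $s(r-s) \ge 0$ for $0 \le s \le r$, the upper bound $e(\T) \le \frac{r-1}{2r}n^2$ is immediate. For the lower bound, the quantity $s(r-s)$ is maximized over integers (indeed over reals) $s \in [0,r]$ at $s = r/2$, giving $s(r-s) \le r^2/4$, hence $\frac{s(r-s)}{2r} \le \frac{r}{8}$, which yields $e(\T) \ge \frac{r-1}{2r}n^2 - \frac{r}{8}$.

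\textbf{Step 3: The floor identity for $1 \le r \le 7$.} I need $e(\T) = \lfloor \frac{r-1}{2r}n^2 \rfloor$, i.e. that $\frac{r-1}{2r}n^2 - e(\T) = \frac{s(r-s)}{2r} \in [0,1)$, AND that $\frac{r-1}{2r}n^2 - \lfloor \frac{r-1}{2r}n^2 \rfloor$ equals this same fractional quantity — equivalently, since $e(\T)$ is an integer, it suffices to show $0 \le \frac{s(r-s)}{2r} < 1$. Because $s(r-s) \le \lfloor r^2/4 \rfloor$ for integer $s$, it suffices that $\lfloor r^2/4 \rfloor < 2r$, i.e. $r^2/4 < 2r + 1$ roughly, which holds exactly for $r \le 7$ (at $r=8$, $s=4$ gives $s(r-s)=16 = 2r$, so the fractional part is $1$, a boundary failure — consistent with the claim being stated only up to $r=7$). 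I would present this as: for $1 \le r \le 7$ one checks $\max_{0 \le s < r} s(r-s) < 2r$ directly (the values are $0, 0, 1, 2, 4, 6, 9, 12$ for $r = 1,\dots,7$ versus $2r = 2,4,6,8,10,12,14$), so $\frac{s(r-s)}{2r} < 1$ and hence $e(\T) = \lfloor e(\T) + \frac{s(r-s)}{2r} \rfloor = \lfloor \frac{r-1}{2r}n^2 \rfloor$.

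The only mild subtlety is Step 3: one must be careful that $\lfloor r^2/4 \rfloor$, not $r^2/4$, is the relevant maximum of $s(r-s)$ over integers, and check the boundary $r=7$ versus $r=8$ to see why the hypothesis cuts off where it does. Everything else is straightforward algebra, and since the result is cited to \cite{Vatter,Lavrov}, a brief self-contained verification along these lines suffices.
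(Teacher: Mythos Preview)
Your proof is correct; the paper itself does not prove this proposition but merely states it with citations to \cite{Vatter,Lavrov}, so there is no authorial argument to compare against. One trivial slip: in Step~3 you list eight values ``$0,0,1,2,4,6,9,12$'' for the seven cases $r=1,\dots,7$; the correct list of $\max_{0\le s<r} s(r-s)$ is $0,1,2,4,6,9,12$, and each is indeed strictly less than the corresponding $2r$.
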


\section{Extremal Graphs}\label{sec:graphs}

In this section we define three families of potential extremal graphs, $\Gell$, $\Hl$, and $\mathcal{J}^\ell_{n,r}$, and address the Hamiltonicity and related properties of these families of graphs in \cref{prop:gell}, \cref{kpathtrace}, and \cref{Hfam}.

\begin{defn}[$\Gell$, $\Hl$, and $\mathcal{J}^\ell_{n,r}$]\label{def:graphs}
Let $\Gell$ be the set of $n$-vertex, $K_{r+1}$-free graphs consisting of $\T[n-1][r]$ plus a vertex of degree $\ell+1$. Notice that $\G$ is one of the two graphs in $\Gell[r][n][0]$, and $\Gs \in \Gell$ provided that $\ell+1 \le \floor{(r-1)n/r}$, so $\Gell$ is nonempty if $n \ge (1+1/(r-1))(\ell+1)+1$.

Let $\Hl$ be the set of $n$-vertex, $K_{r+1}$-free graphs consisting of $\T[(n+1+\ell)/2][r]$ plus an independent set of $(n-1-\ell)/2$ vertices of degree $(n-1+\ell)/2$. Notice that $\Hl$ is empty when $n \equiv \ell \pmod{2}$.

Let $\mathcal{J}^\ell_{n,r}$ be the set of $n$-vertex, $K_{r+1}$-free graphs consisting of $\T[n-1][r]$ plus a vertex of degree $\ell+1$ whose neighborhood is traceable (equivalently, by \cref{prop:partitetraceable}, in the neighborhood of the vertex of degree $\ell + 1$ each part has size at most $(\ell + 2)/2$). Notice that $\mathcal{J}^\ell_{n,r} \subseteq \Gell$, and $\Gs \in \mathcal{J}^\ell_{n,r}$ provided that $\ell+1 \le \floor{(r-1)n/r}$, so $\mathcal{J}^\ell_{n,r}$ is nonempty if $n \ge (1+1/(r-1))(\ell+1)+1$.
\end{defn}

In Section \ref{sec:edge} we prove that the graphs in $\Gell$ are extremal for sufficiently large $n$. First we confirm here that these graphs satisfy the requirements not to have various Hamiltonicity-like properties.

\begin{prop}\label{prop:gell is extremal}
    For integers $2 \le r \le n-1$ and $-1 \le \ell \le n-3$, suppose $G \in \Gell$. Then $G$ is an $n$-vertex, $K_{r+1}$-free graph. Let $d_1 \le \cdots \le d_n$ be its degrees. Then $d_j \le j+\ell$ for $j=1$, and $e(G)=e(\T[n-1])+\ell+1$.
\end{prop}

\begin{proof}
    Let $G \in \Gell$. By the definition of $\Gell$, $G$ contains a vertex of degree $\ell+1$, so satisfies $d_j \le j+\ell$ for $j=1$. Since $G$ consists of a $\T[n-1][r]$ plus a vertex of degree $\ell+1$, we have $e(G)=e(\T[n-1])+\ell+1$. 
\end{proof}

\begin{prop}\label{prop:gell}
    For integers $2 \le r \le n-1$ and $-1 \le \ell \le n-3$, suppose $G \in \Gell$.
    \begin{enumerate}[(a)]
        \item If $\ell = -1$, then $G$ is not traceable and $e(G)=e(\T[n-1])$.
        \item If $\ell = 0$, then $G$ is not Hamiltonian and $e(G)=e(\T[n-1])+1$.
        \item If $\ell = 1$, then $G$ is not Hamiltonian-connected and $e(G)=e(\T[n-1])+2$.
        \item For $\ell \ge 0$, if $G \in \mathcal{J}^\ell_{n,r}$, then $G$ is not $\ell$-path Hamiltonian and $e(G)=e(\T[n-1])+\ell+1$.
        \item For $\ell \ge 0$, $G$ is not $\ell$-Hamiltonian and $e(G)=e(\T[n-1])+\ell+1$.
        \item For $\ell \ge 1$, $G$ is not $\ell$-Hamiltonian-connected and $e(G)=e(\T[n-1])+\ell+1$.
        \item $G$ is not $(\ell+2)$-connected and $e(G)=e(\T[n-1])+\ell+1$.
    \end{enumerate}
\end{prop}

\begin{proof}The edge counts are given by \cref{prop:gell is extremal}. Since $G \in \Gell$, $G$ contains a vertex $v$ of degree $\ell+1$.
    \begin{enumerate}[(a)]
        \item If $\ell=-1$, then $G$ contains an isolated vertex so has no Hamiltonian path.
        \item If $\ell=0$, then $G$ contains a vertex of degree $1$ so has no Hamiltonian cycle.
        \item If $\ell=1$, then $G$ contains a vertex $v$ of degree two. Let $x$ and $y$ be the neighbors of $v$. Any path from $x$ to $y$ containing $v$ contains only three vertices, but $n \ge 4$, so there is no Hamiltonian path from $x$ to $y$.
        \item Since $G \in \mathcal{J}^\ell_{n,r}$, the vertex $v$ which has degree $\ell+1$ has a traceable neighborhood. Let $P$ be a path of length $\ell$ in the neighborhood of $v$, and let $u$ and $w$ be the endpoints of $P$. Any cycle containing $v$ and $P$ must contain the edges $\set{u,v}$ and $\set{v,w}$ and therefore contains no other vertices. The length of such a cycle is $\ell+2 \le n-1$, so no Hamiltonian cycle contains $P$.
        \item Deleting any $\ell$ of the neighbors of $v$ yields a graph containing a vertex of degree $1$, which therefore is not Hamiltonian.
        \item Deleting any $\ell-1$ neighbors of $v$ yields a non-Hamiltonian-connected graph. 
        \item Deleting the $\ell+1$ neighbors of $v$ yields a disconnected graph.\qedhere
    \end{enumerate}
\end{proof}

We use \cref{kpathcycle} to characterize the non-$k$-path Hamiltonian graphs in the family $\Gell$ in \cref{kpathtrace}.

\begin{lem}\label{kpathcycle}
    Let $G \in \Gell[r][n][k]$ and $n\ge 8+k+(2k+12)/(r-2)$ where $r \ge 3$ and $k \ge 1$. If $G$ contains a path of length at most $k+4$ that contains the exceptional vertex $v$ of degree $\ell+1$ not as an end vertex then the path is contained in a Hamiltonian cycle.
\end{lem}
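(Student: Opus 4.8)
The plan is to show directly that a path $Q$ of length at most $k+4$ through $v$ (not as an endpoint) extends to a Hamiltonian cycle, by "absorbing" the vertices of $Q$ that lie on either side of $v$ into the Turán graph $T_r(n-1)$ and then applying a Hamiltonicity criterion (Dirac or Chvátal) to a large complete-multipartite-like graph. Write $Q = x_a \dots x_1 v y_1 \dots y_b$ with $a+b \le k+4$ and $a,b \ge 1$; note $x_1,y_1$ are the two neighbors of $v$ on $Q$, both in $N(v) \subseteq V(T_r(n-1))$. The two edges $\{x_1,v\}$ and $\{v,y_1\}$ are forced in any cycle through $Q$ containing $v$, so building a Hamiltonian cycle containing $Q$ amounts to finding, in $G - v = T_r(n-1)$, a Hamiltonian path from $x_1$ to $y_1$ whose restriction contains the sub-paths $x_a\cdots x_1$ and $y_1 \cdots y_b$ — equivalently, a Hamiltonian path of $T_r(n-1)$ between $x_a$ and $y_b$ that uses all the edges of $Q - v$.

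First I would reduce to a Hamiltonian-path problem in a complete multipartite graph. Contract each of the two paths $P_x = x_a\cdots x_1$ and $P_y = y_1\cdots y_b$ of $T_r(n-1)$ to a single "super-vertex"; since $T_r(n-1)$ is complete multipartite, the contracted graph $G'$ on $n-1-(a-1)-(b-1) = n+1-a-b$ vertices is again complete multipartite provided each of $P_x, P_y$ is "monochromatic-free" enough — more carefully, one should instead just delete the internal vertices of the two paths and their incident structure, leaving a complete multipartite graph $G''$ on $n-1-(a+b-2) \ge n-1-(k+2) = n-k-3$ vertices, and then ask for a Hamiltonian path of $G''$ between $x_1$ and $y_1$; prepending $P_x$ and appending $P_y$ (whose edges are in $T_r(n-1)$) and the edges to $v$ recovers the Hamiltonian cycle of $G$ through $Q$. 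The largest part of $G''$ has size at most $\lceil (n-1)/r \rceil$, and $|V(G'')| \ge n-k-3$, so the hypothesis $n \ge 8+k+(2k+12)/(r-2)$ is exactly what is needed to guarantee $\lceil (n-1)/r\rceil \le (|V(G'')|-1)/2$, i.e. that $G''$ with any two prescribed endpoints has a Hamiltonian path — this follows from Proposition~\ref{prop:partitetraceable} applied to $G'' - \{x_1,y_1\}$, or more cleanly by noting $G''$ minus two vertices still has minimum degree at least half its order and invoking Dirac's theorem on the join with $K_1$ as in the proof of Proposition~\ref{prop:partitetraceable}. I would do this arithmetic carefully since it is the crux of the bound on $n$.

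The one subtlety — and the step I expect to be the main obstacle — is that $x_1$ and $y_1$ need not lie in distinct parts of $T_r(n-1)$, and some internal vertices $x_i, y_j$ might coincide in the part-structure with the endpoints in a way that blocks the naive argument; one must also handle the degenerate possibility that $x_1 = y_j$ or $x_i = y_j$ for some $i,j$ is impossible since $Q$ is a path, but two of these vertices sharing a part is possible. To deal with this I would argue as follows: set $S = \{x_2,\dots,x_a,y_2,\dots,y_b\}$ (the internal vertices other than $x_1, y_1$), delete $S$ from $T_r(n-1)$, and observe $T_r(n-1) - S$ is complete multipartite on at least $n-k-3$ vertices with largest part at most $\lceil(n-1)/r\rceil$. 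A complete multipartite graph $H$ has a Hamiltonian path between any two specified vertices $u,w$ provided the largest part of $H - \{u,w\}$ is at most $(|V(H)|-1)/2$ (this is Proposition~\ref{prop:partitetraceable} applied to $H$ with endpoints merged into a new vertex, or a short direct degree argument); removing $u,w$ drops the largest part by at most $2$ and the order by exactly $2$, so the inequality $\lceil(n-1)/r\rceil \le (n-k-5)/2$, which again reduces to $n \ge 8+k+(2k+12)/(r-2)$ up to rounding, suffices. Prepending $x_a\cdots x_1$ and appending $y_1\cdots y_b$ and finally the two edges to $v$ then yields a Hamiltonian cycle of $G$ containing $Q$, completing the proof. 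Throughout I would keep the rounding honest via the Proposition that $\lfloor x\rfloor - \lfloor y\rfloor > x-1-y$.
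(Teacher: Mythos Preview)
Your approach is genuinely different from the paper's and, once repaired, does work. The paper deletes the \emph{entire} path $P$ from $G$, observes that $G' = G - V(P)$ is complete multipartite with every part smaller than $|V(G')|/2$, takes a Hamiltonian cycle $C'$ of $G'$ (via \cref{prop:partitehamiltonian}), and then splices $P$ into $C'$ by a short adjacency argument using three consecutive vertices of $C'$. Your route instead keeps the two neighbours $x_1,y_1$ of $v$, deletes only the remaining internal path vertices, and asks for a Hamiltonian $x_1$--$y_1$ path in the resulting complete multipartite graph. The paper's version is more elementary in that it only needs Hamiltonicity of complete multipartite graphs (already available as \cref{prop:partitehamiltonian}); yours is conceptually cleaner but needs a Hamiltonian-connectedness statement that the paper never states or proves.

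There is, however, a genuine gap in your key lemma. You assert that a complete multipartite graph $H$ has a Hamiltonian $u$--$w$ path whenever the largest part of $H-\{u,w\}$ has size at most $(|V(H)|-1)/2$. This is false: take $H=K_{5,3}$ and $u,w$ in the part of size $5$; then $H-\{u,w\}=K_{3,3}$ has largest part $3\le 7/2$, yet $K_{5,3}$ is not even traceable. The correct sufficient condition is that the largest part of $H$ itself has size at most $(|V(H)|-1)/2$, since then $\delta(H)\ge (|V(H)|+1)/2$ and the classical minimum-degree criterion for Hamiltonian-connectedness applies. Your ``merge $u,w$ and apply \cref{prop:partitetraceable}'' justification does not work either, because after merging the two neighbours of the merged vertex on a Hamiltonian cycle need not be one neighbour of $u$ and one of $w$. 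Fortunately the numerics survive: with $|V(H)|\ge n-k-3$ and largest part at most $\lceil(n-1)/r\rceil$, the hypothesis $n\ge 8+k+(2k+12)/(r-2)$ gives $\lceil(n-1)/r\rceil\le (n-k-6)/2<(n-k-4)/2\le(|V(H)|-1)/2$, so the \emph{correct} criterion is satisfied. If you want to stay within the tools the paper provides, it is simpler to follow the paper's splice-into-a-Hamiltonian-cycle argument instead.
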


\begin{proof}
    Let $G$ be such a graph with path $P = (p_1,\dots,p_i)$ of length $i-1 \le k+4$ containing $v$ such that $v$ is not an end vertex of $P$. Since $n \ge 8+k+(2k+12)/(r-2)$, it follows that \begin{align}
        \lceil (n-1)/r\rceil &\le (n+r-2)/r \le (n-k-6)/2<(n-(k+5))/2.\label{G'partsize}
    \end{align} Let $G' = G - V(P)$, so $G'$ is a complete multipartite graph on $n-i \ge n-(k+5)$ vertices with parts of size at most $\lceil (n-1)/r\rceil< (n-(k+5))/2$ by \cref{G'partsize}. By \cref{prop:partitekHam} with $k=0$, $G'$ is Hamiltonian.

    Let $C'$ be a Hamiltonian cycle in $G'$. As $n \ge 8+k+(2k+12)/(r-2) \ge 8+k$, we have $|V(C')| \ge n-(k+5) \ge 3$. Let $a$, $b$, and $c$ be consecutive vertices in $C'$. As $a$ and $b$ are adjacent in $G' \subset G$, they lie in different parts of $G$. Then $p_{i}$ is adjacent to at least one of $a$ and $b$. Without loss of generality, say $p_i$ is adjacent to $a$. Then concatenating $P$ and $C'$ yields a Hamiltonian path $(p_1, \dots, p_i, a, \dots, b)$ in $G$. 

    If $p_1$ is adjacent to $b$, we have a Hamiltonian cycle $$(p_1, \dots, p_i, a, \dots, b, p_1)$$ in $G$, and so $G$ has a Hamiltonian cycle that contains $P$. If $p_1$ is not adjacent to $b$ then $p_1$ and $b$ lie in the same part, and so $p_1$ is adjacent to $c$.

    By \cref{G'partsize}, every part of $G'$ contains fewer than half the vertices of $G'$, so $C'$ has a pair of consecutive vertices $(u_1, u_2)$ both of which are in parts not containing $b$ so are adjacent to $b$. Replacing $(u_1,u_2)$ with $(u_1,b,u_2)$ in $C'$ and deleting $b$ from $(a,b,c)$ in $C'$ gives the Hamiltonian cycle $$(p_1, \dots, p_i, a, \dots, u_2, b, u_1, \dots, c, p_1)$$ in $G$, and so $G$ has a Hamiltonian cycle that contains $P$.
\end{proof}

\begin{prop}\label{kpathtrace}
    Let $r\ge 3$, $k \ge 0$, and $n \ge \max\{8+k+(2k+12)/(r-2), k+2r+k/(r-1)$\}. If $G \in \Gell[r][n][k]$ is not $k$-path Hamiltonian then $G \in \mathcal{J}^k_{n,r}$. 
\end{prop}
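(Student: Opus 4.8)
The plan is to prove the contrapositive: if $G\in\Gell[r][n][k]$ has exceptional vertex $v$ (of degree $k+1$) with $N(v)$ \emph{not} traceable, then $G$ is $k$-path Hamiltonian. When $k=0$ this is vacuous, since $N(v)$ is then a single vertex and hence traceable, so $\mathcal{J}^k_{n,r}=\Gell[r][n][k]$; thus assume $k\ge 1$. By \cref{prop:partitetraceable}, non-traceability of the complete multipartite graph $G[N(v)]$ means some part $B:=N(v)\cap V_j$, with $V_j$ a part of $T:=\T[n-1][r]$, has $|B|>(k+2)/2$. The whole argument then reduces, via \cref{kpathcycle} and the hypothesis $n\ge 8+k+(2k+12)/(r-2)$, to the claim: \emph{for every path $P$ of length at most $k$ in $G$ there is a path $P'$ with $P\subseteq P'$, $|E(P')|\le k+4$, and $v$ a non-end vertex of $P'$}; then \cref{kpathcycle} embeds $P'$, hence $P$, in a Hamiltonian cycle. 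One fact used throughout is that $V(P)\ne N(v)$: if $|V(P)|<k+1$ this is clear, and if $|V(P)|=k+1$ then $V(P)=N(v)$ would exhibit $P$ as a spanning path of $G[N(v)]$, contradicting non-traceability; in particular $N(v)\setminus V(P)\ne\emptyset$.

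The construction of $P'$ splits by where $v$ sits relative to $P$. If $v$ is already internal to $P$, take $P'=P$. If $v$ is an endpoint of $P$ (or $P=(v)$), prepend to the $v$-end a neighbor $p_0\in N(v)\setminus V(P)$, which exists because the at most $|V(P)|-1\le k$ non-$v$ vertices of $P$ cannot exhaust the $k+1$ neighbors of $v$ (when $P=(v)$, prepend one neighbor and append another, using $|N(v)|\ge 2$); this yields $P'$ of length $\le k+1$ with $v$ internal. If $v\notin V(P)$ and an endpoint of $P=(p_1,\dots,p_i)$, say $p_i$, lies in $N(v)$, put $P'=(p_1,\dots,p_i,v,q)$ with $q\in N(v)\setminus V(P)$; this has length $i+1\le k+2$ and $v$ internal.

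The remaining, main case is $v\notin V(P)$ with $p_1,p_i\notin N(v)$. The crucial counting step is that $|N(v)\setminus V(P)|\ge 2$: since $p_1,p_i\notin B$, the vertices of $B$ lying on $P$ occupy pairwise non-consecutive \emph{interior} positions of $P$, so $|B\cap V(P)|\le\lfloor(|V(P)|-1)/2\rfloor\le\lfloor k/2\rfloor$, and with $|B|>(k+2)/2$ this forces $|B\setminus V(P)|\ge 2$. Pick $q_2\in N(v)\setminus V(P)$; then pick an auxiliary vertex $w\in V(T)$ avoiding $V(P)$, the part of $T$ containing $p_i$, the part of $T$ containing $q_2$, and (if needed) the one other vertex of $N(v)\setminus V(P)$ — possible since that forbidden set has size at most $(k+2)+2\lceil(n-1)/r\rceil$, which the bound on $n$ keeps below $|V(T)|=n-1$. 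Finally pick $q_1\in N(v)\setminus(V(P)\cup\{q_2,w\})$, nonempty by the counting step. Then $P'=(p_1,\dots,p_i,w,q_2,v,q_1)$ is a path of $G$ — each consecutive pair is an edge (lying inside $T$ because its endpoints are in different parts, or incident to $v$), and all vertices are distinct — with $v$ penultimate and $|E(P')|=i+3\le k+4$, so \cref{kpathcycle} applies.

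The only real content is this last case: the counting inequality $|N(v)\setminus V(P)|\ge 2$ together with the simultaneous choice of $w$ adjacent to both $p_i$ and $q_2$ while dodging the at most one remaining neighbor of $v$; this is exactly where non-traceability of $N(v)$ and the lower bounds on $n$ are consumed. Everything else is bookkeeping, because the genuinely hard step — converting a short path through $v$ into a Hamiltonian cycle — is already packaged in \cref{kpathcycle}.
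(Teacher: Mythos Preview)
Your proof is correct and follows the same strategy as the paper's: extend every short path to one of length at most $k+4$ with $v$ internal, then apply \cref{kpathcycle}. The differences are cosmetic --- you take the contrapositive and exploit the large part $B\subseteq N(v)$ to force $|N(v)\setminus V(P)|\ge 2$ in the main case (the paper instead argues directly that any path not containing all of $N(v)$ extends, splitting on whether $|N(v)\setminus V(P)|\ge 2$ or $=1$, the latter forcing an endpoint into $N(v)$, which is your case 3), and your auxiliary construction $(P,w,q_2,v,q_1)$ is a mirror of the paper's $(u_1,v,u_2,u_3,P)$.
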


\begin{proof}For $k = 0$, $\mathcal{J}^k_{n,r} = \Gell[r][n][k]$ because the neighborhood of a vertex of degree one is traceable. Now suppose $k \ge 1$.

    Suppose $G$ is not $k$-path Hamiltonian. Then $G$ contains a path $P=(p_1, \ldots, p_i)$ of length $i-1 \le k$ such that $P$ is not contained in a Hamiltonian cycle. We show that if $V(P)$ does not contain the neighborhood $N(v)$ of the exceptional vertex $v$ of degree $\ell+1$, then $P$ is contained in a Hamiltonian cycle. That is, we show that $V(P)$ contains $N(v)$, and since $|V(P)| \le k+1 = |N(v)|$, the path $P$ is a Hamiltonian path of the induced subgraph in the neighborhood of $v$, so $G \in \mathcal{J}^k_{n,r}$.

    We show that $P$ can be extended to a path of length at most $k+4$ that contains $v$ not as an end vertex and so can be extended to a Hamiltonian cycle by \cref{kpathcycle} as $n \ge 8+k+(2k+12)/(r-2)$. If $v \in V(P)$ and $v$ is not an end vertex, we are done. If $v \in V(P)$ and $v$ is an end vertex, say $v=p_1$, there is a $u \in N(v) \setminus V(P)$ such that $(u,v,\dots,p_i)$ is a path of length at most $k+1$ that contains $v$ not as an end vertex. Therefore we assume $v \notin V(P)$. We consider two cases: if at least two vertices of $N(v)$ are not contained in the path $P$ and if exactly one vertex of $N(v)$ is not contained in the path $P$.

    \begin{case} There are distinct vertices $u_1$ and $u_2$ in $N(v)\setminus V(P)$.

    If there is any edge from $u_1$ or $u_2$ to $p_1$ or $p_i$, by concatenating $(u_1, v, u_2)$ and $P$, we have a path of length at most $k+3$ that contains $v$ not as an end vertex. If not, then $u_1, u_2, p_1, p_i$ are all in the same part. Let $X$ be the part containing $\{u_1, u_2, p_1, p_i\}$. As at most $i-1\le k$ vertices in $\set{p_2, \dots, p_{i-1},v}$ are distributed over the $r-1$ parts of $G - X$, there is always a part in $G$ containing at most $\floor{k/(r-1)}$ vertices of $\set{p_2, \dots, p_{i-1},v}$. As $n\ge k+2r+k/(r-1)$ and $r \ge 3$, the number of vertices in this part of $G-X$ which are also not in $\set{p_2, \dots, p_{i-1},v}$ is at least \begin{align}\label{vpath} \lfloor(n-1)/r\rfloor -\lfloor k/(r-1) \rfloor &\ge (n-r)/r - k/(r-1) \ge 1 \end{align} and so there is a vertex $u_3$ in $G-(V(P)\cup\set{u_1,v,u_2})$ that is adjacent to both $u_2$ and $p_1$. The path $(u_1, v, u_2, u_3, p_1, \dots, p_i)$ has length at most $k+4$ and contains $v$ not as an end vertex. 

    \end{case}

    \begin{case} There is exactly one vertex $u \in N(v)\setminus V(P)$, so $N(v)\setminus\set{u}\subseteq V(P)$.
    
    As $P$ has length at most $k$ and $|N(v)|=k+1$, at least one of the end vertices of $P$ (without loss of generality, say $p_1$) is in $N(v)$. Then the path $(u, v, p_1, \dots, p_i)$ of length at most $k+2$ contains $v$ not as an end vertex.\qedhere
    \end{case}
\end{proof}

Thus, by \cref{kpathtrace} and part 4 of \cref{prop:gell}, for large enough $n$, the graphs in $\Gell[r][n][k]$ which are not $k$-path Hamiltonian are exactly those in $\mathcal{J}^k_{n,r}$. We now turn our attention to properties of graphs in the family $\Hl$. For $4 \le r \le 7$, we prove in \cref{thm:kr+1general} that all extremal graphs are either in $\Gell$ or, for some smaller values of $n$, $\Hl$.

\begin{lem}\label{Hfamsize}
    If $G \in \Hl$, then $n \le 4r-\ell-3$ and $G$ consists of 
    an independent set $V(J)$ of $(n-1-\ell)/2$ vertices of degree $(n-1+\ell)/2$ and $H \cong \T[(n+1+\ell)/2][r]$ containing at least one partite set of size 1 and all other partite sets of size at most 2. 
    Furthermore, all vertices in partite sets of size $2$ are adjacent to all vertices of $J$.
\end{lem}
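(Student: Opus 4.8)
The plan is to unpack the definition of $\Hl$ and then play $K_{r+1}$-freeness against the neighbourhood of a single vertex of the attached independent set. Write $p \defeq (n+1+\ell)/2$; since $\Hl$ is nonempty, $n\not\equiv\ell\pmod 2$, so $p$, $(n-1-\ell)/2$, and $(n-1+\ell)/2$ are integers, $H\cong\T[p][r]$ is the ``Tur\'an part'' of $G$, and $J\defeq V(G)\setminus V(H)$ is an independent set of $n-p = (n-1-\ell)/2$ vertices, each of degree $(n-1+\ell)/2 = p-1$. (We may assume $J\ne\emptyset$, i.e.\ $\ell\le n-3$, which is needed for the statement and holds in every application.) The first step is the elementary but decisive observation that, since $J$ is independent, each $v\in J$ has $N(v)\subseteq V(H)$; as $|N(v)| = p-1 = |V(H)|-1$, this means $N(v) = V(H)\setminus\set{w_v}$ for a unique $w_v\in V(H)$. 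So every vertex of $J$ is nonadjacent to exactly one vertex of $H$.

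Next I would invoke $K_{r+1}$-freeness. Fix $v\in J$: since $v$ is adjacent to everything in $N(v)$, the induced subgraph $G[N(v)] = H - w_v$ is $K_r$-free. But $H-w_v$ is a complete multipartite graph whose parts are those of $H = \T[p][r]$ with one of them shrunk by a single vertex, and such a graph is $K_r$-free exactly when it has at most $r-1$ nonempty parts. Since $\T[p][r]$ has $\min(r,p)$ nonempty parts, this forces either $p\le r-1$ or ($p\ge r$ and) $w_v$ lying in a part of $H$ of size $1$. In both cases $\lfloor p/r\rfloor\le 1$, hence $p\le 2r-1$; since $\T[p][r]$ is balanced, every part then has size at most $\lceil p/r\rceil\le 2$, and at least one part has size $1$. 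Rewriting $p\le 2r-1$ via $n = 2p-1-\ell$ gives the asserted bound $n\le 4r-\ell-3$, and the structure of $H$ claimed in the lemma (one part of size $1$, all others of size at most $2$) is exactly what was just obtained.

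It remains to establish the ``furthermore'' clause. A part of size $2$ can occur only when $p\ge r$ (when $p\le r-1$ all parts have size at most $1$ and the clause is vacuous); in that regime, running the previous paragraph's argument for every $v'\in J$ shows each $w_{v'}$ lies in a size-$1$ part of $H$. Hence if $u$ lies in a part of size $2$, then $u\ne w_{v'}$ for all $v'\in J$, so $u\in N(v')$ for all $v'\in J$, i.e.\ $u$ is adjacent to every vertex of $J$.

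I do not expect a real obstacle: the argument is short, and the only points needing care are the two boundary regimes---excluding $J=\emptyset$ (covered by the standing hypothesis $\ell\le n-3$) and the degenerate case $p<r$ where $\T[p][r]$ is really $K_p$ and the ``$r$ nonempty parts'' bookkeeping must be replaced by the triviality that all parts then have size at most $1$. The whole lemma turns on the one-line fact that $N(v)$ induces a $K_r$-free subgraph of a complete $r$-partite graph while omitting only a single vertex, which simultaneously forces $w_v$ into a singleton part and caps every part size at $2$.
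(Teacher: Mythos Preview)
Your proof is correct and follows essentially the same approach as the paper's: both identify the unique non-neighbor $w_v\in V(H)$ of each $v\in J$, use $K_{r+1}$-freeness of $N[v]$ to force $w_v$ into a singleton part, and deduce from the balancedness of the Tur\'an graph that all parts have size at most $2$, whence $p\le 2r-1$ and $n\le 4r-\ell-3$. You are slightly more explicit than the paper in handling the degenerate regime $p<r$ (where $\T[p][r]\cong K_p$ and the ``furthermore'' clause is vacuous), but this is a minor refinement of presentation rather than a different argument.
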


\begin{proof}
    Let $G \in \Hl$. Then $G$ is an $n$-vertex, $K_{r+1}$-free graph consisting of an independent set $V(J)$ of $(n-1-\ell)/2$ vertices of degree $(n-1+\ell)/2$ in $G$, and $H \cong \T[(n+1+\ell)/2][r]$. Each vertex $v$ in $J$ has degree $(n-1+\ell)/2$ in $H$ so has exactly one non-neighbor in $H$. If the non-neighbor of $v$ were in a partite set of $H$ of size greater than $1$, then $H$ would have exactly $r$ partite sets each of size at least $1$, and $v$ would be adjacent to at least one vertex in each of the $r$ partite sets, forming a $K_{r+1}$ in $N[v]$, so the non-neighbor of $v$ is in a part of size $1$, and all vertices in partite sets of size greater than 1 must be adjacent to all vertices of $J$. Thus $H$ has some partite set of size $1$, and as $H$ is a Tur\'{a}n graph, all of its partite sets have size at most $2$. Therefore, $|V(H)| = \frac{n+1+\ell}{2} \le 2r-1$, so $n \le 4r-\ell-3$.
\end{proof}

\begin{prop}\label{prop:kpathham}
    Let $G \in \Hl$ where $0 \le \ell \le n-3$ and $r\ge 3$, and suppose $G$ is a complete multipartite graph. Then $G$ is not $\ell$-path Hamiltonian.
\end{prop}
\begin{proof}
    By \cref{Hfamsize}, $n \le 4r-\ell-3$ and $G$ consists of 
    an independent set $V(J)$ of $(n-1-\ell)/2$ vertices of degree $(n-1+\ell)/2$ and $H \cong \T[(n+1+\ell)/2][r]$ containing at least one partite set of size 1 and all other partite sets of size at most 2. 
    Furthermore, all vertices in partite sets of size $2$ are adjacent to all vertices of $J$. 
    
    Let $x, y \in V(J)$, and let $z$ be a vertex in a part of size $1$ in $H$. Since $G$ is complete multipartite, and $x$ and $y$ are nonadjacent, $x$ and $y$ are in the same part of $G$. Then $z$ is either in a different part or in the same part of $G$, so either adjacent or nonadjacent to both $x$ and $y$. Each vertex of $J$ is nonadjacent to one vertex of $H$, and because $G$ is complete multipartite, the vertices of $J$ must all be nonadjacent to the same vertex $z$ of $H$. Then $V(J) \cup \set{z}$ is a partite set of $G$ of size $(n+1-\ell)/2$, and $H-z \cong \T[(n-1+\ell)/2][r-1]$. 
    
    Let $m$ be the size of a largest part in $H-z$. Recall that by \cref{Hfamsize} we have $m \le 2$ and $n \ge 4r-\ell-3$. Since $r \ge 3 \ge m+1$, we have $n \ge 4r-\ell-3\ge 4(m+1)-3-\ell \ge 4m-\ell-1$, and $n \ge 4m-\ell-1$ implies $m \le ((n-1+\ell)/2+1)/2$. Then by \cref{prop:partitetraceable}, $H-z$ is traceable, i.e. has a path on $(n-1+\ell)/2 \ge \ell+1$ vertices; the last inequality follows from $n \ge \ell+3$. 
    By \cref{lem:partite_kpath}, $G$ is not $\ell$-path Hamiltonian. 
\end{proof}

\begin{lem}\label{Hfam}
Let $G \in \Hl[r][n][\ell]$ where $\ell \ge -1$ and $n \ge \ell+5$. \begin{enumerate}[(a)]
        \item For $\ell = -1$, if $G$ is not traceable then $G$ is a complete multipartite graph with one partite set of size $(n+2)/2$ and the other partite sets of sizes at most 2.
        \item For $\ell = 0$, if $G$ is not Hamiltonian then $G$ is a complete multipartite graph with one partite set of size $(n+1)/2$ and the other partite sets of sizes at most 2.
        \item For $\ell = 1$, if $G$ is not Hamiltonian-connected then $G$ is a complete multipartite graph with one partite set of size $n/2$ and the other partite sets of sizes at most 2.
        \item For $0 \le \ell \le n-3$, if $G$ is not $\ell$-Hamiltonian then $G$ is a complete multipartite graph with one partite set of size $(n+1-\ell)/2$ and the other partite sets of sizes at most 2. 
        \item For $0 \le \ell \le n-3$, if $G$ is not $\ell$-path Hamiltonian then $G$ is a complete multipartite graph with one partite set of size $(n+1-\ell)/2$ and the other partite sets of sizes at most 2. 
        \item For $1 \le \ell \le n-3$, if $G$ is not $\ell$-Hamiltonian-connected then $G$ is a complete multipartite graph with one partite set of size $(n+1-\ell)/2$ and the other partite sets of sizes at most 2.
        \item $G$ is $(\ell+2)$-connected.
    \end{enumerate}
\end{lem}

\begin{proof}
    Let $G$ be such an $n$-vertex, $K_{r+1}$-free graph with degrees $d_1 \le \cdots \le d_n$ consisting of an independent set $V(J)$ of $(n-1-\ell)/2$ vertices of degree $(n-1+\ell)/2$ in $G$, and $H \cong \T[(n+1+\ell)/2][r]$. We consider the equality of the neighborhoods $N(u)$ and $N(v)$ for $u,v \in V(J)$, as a partite set of size $(n+1-\ell)/2$ is only possible when $N(u)=N(v)$ for all $u,v \in V(J)$.
    
\begin{case} $N(u)=N(v)$ for all $u,v \in V(J)$

For $\ell \ge -1$, there is one vertex $h \in V(H)$ that is non-adjacent to all the vertices of $J$, so $G$ is a complete multipartite graph with a part $V(J)\cup \{h\}$ of size $(n+1-\ell)/2$. By \cref{Hfamsize}, the other parts of $G$ have sizes at most $2$ as they are also parts of $H$. For all $1 \le i < (n-1-\ell)/2$ we have $d_i=(n-1+\ell)/2>i+\ell$ and for $i=(n-1-\ell)/2$, since we're assuming $n \ge \ell+5$, we have $d_{n-1-\ell} \ge n-2 \ge n-i$. For $-1 \le \ell \le n-3$, by the contrapositive form of \cref{thm:kstable} with $P=(\ell+2)$-connectedness and \cref{table}, $G$ is $(\ell+2)$-connected.
\end{case}

In the remainder of the proof, we prove that if $G$ does not contain a partite set of size $(n+1-\ell)/2$ then $G$ is traceable, Hamiltonian, Hamiltonian-connected, $\ell$-path Hamiltonian and $\ell$-Hamiltonian, $\ell$-Hamiltonian-connected, or $(\ell+2)$-connected for $\ell=-1$, $\ell=0$, $\ell=1$, $0 \le \ell \le n-3$, $1 \le \ell \le n-3$, or $-1 \le \ell \le n-3$, respectively.

\begin{case}\label{case:unequal} $N(u)\neq N(v)$ for some $u,v \in V(J)$

In this case, each vertex in $H$ is adjacent to at least one vertex in $J$. By \cref{Hfamsize}, the vertices of $H$ belong to a partite set of size 1 or a partite set of size 2, and the vertices in $J$ are adjacent to all vertices in partite sets of size 2. For every $h \in V(H)$ in a partite set of size 1, the degree of $h$ in $H$ is $(n-1+\ell)/2$ and the degree in $J$ is at least 1. For every $h \in V(H)$ in a partite set of size 2, the degree of $h$ in $H$ is $(n-3+\ell)/2$ and the degree of $h$ in $J$ is $(n-1-\ell)/2$. Using $n \ge \ell+5$, for all $h \in V(H)$, $d_G(h) \ge (n+1+\ell)/2$. Therefore $G$ has degree sequence $((n-1+\ell)/2,\dots, (n-1+\ell)/2, d_{(n+1-\ell)/2}, \dots, d_n)$ where $d_{(n+1-\ell)/2} \ge (n+1+\ell)/2$. For all $1 \le i < (n-1-\ell)/2$ we have $d_i = (n-1+\ell)/2 > i+\ell$, and for $i=(n-1-\ell)/2$ we have $d_{n-i-\ell}=d_{(n+1-\ell))/2}
\ge (n+1+\ell)/2 > n-i-1$. 
By the contrapositive form of \cref{thm:kstable} and \cref{table}, for the appropriate values of $\ell$, $G$ has the properties as claimed above \cref{case:unequal}.\qedhere
\end{case}
\end{proof}

\section{Edge Bounds from Degree Conditions}\label{sec:bounds}

In this section we prove an upper bound on the number of edges in an $n$-vertex, $K_{r+1}$-free graph that also has low-degree vertices, for all $r \ge 3$. 
Specifically, we prove the following.

\begin{thm}\label{thm:degcondsummary}
    Let $\ell \ge -1$, $r \ge 3$, and \[n \ge \begin{cases} 6\ell+26 & \text{if } r=3\\ \max\{3+ \ell + \frac{4(\ell+2)}{r-3},5+\ell+\frac{r+2\ell+7}{2r-2}\} & \text{if } 4 \le r \le 7\\ \ell+5+\frac{4(\ell+4)}{r-4} & \text{if } r \ge 8\end{cases}\] be integers, or $(\ell,r,n)=(1,8,10)$. Let $G$ be an $n$-vertex, $K_{r+1}$-free graph with degrees $d_1 \le \cdots \le d_n$. If there is an integer $j$ in $1 \le j \le (n-1-\ell)/2$ such that $d_j \le j+\ell$, then \[ e(G) \leq e(\T[n-1][r]) + (\ell+1). \]
    For $r=3$ and $r \ge 8$, equality holds if and only if $G \in \Gell[r][n][\ell]$. For $4 \le r \le 7$, if $G \in \Gell[r][n]$ then equality holds, and if equality holds then either $G \in \Gell[r][n]$ or $G \in \Hl[r][n]$. In all cases, $\Gell$ is nonempty, and the bound is tight.
\end{thm}

Notice that every graph in $\Gell$ has $d_1 \le 1+\ell$, so satisfies the Chv\'{a}tal-like degree condition. 
We give separate proofs for $r \ge 8$, $4 \le r \le 7$, and $r=3$ in the following subsections. 
In each case we see that $n$ is large enough compared to $\ell$ that $\Gell$ is nonempty (see \cref{def:graphs}), so the upper bound on the number of edges is tight.

\subsection{$r \ge 8$}

In this subsection we determine the maximum number of edges in $n$-vertex, $K_{r+1}$-free graphs having a fair number of low-degree vertices for every $r \ge 8$.

\begin{thm}\label{thm:kr+1general8}
Let $\ell \ge -1$, $r \ge 8$, and $n\ge \ell+5 + \frac{4(\ell+4)}{r-4}$ be integers, or $(\ell,r,n) = (1,8,10)$. Let $G$ be an $n$-vertex, $K_{r+1}$-free graph with degrees $d_1 \le \cdots \le d_n$. If there is an integer $j$ in $1 \le j \le (n-1-\ell)/2$ such that $d_j \le j+\ell$, then \[ e(G) \leq e(\T[n-1][r]) + (\ell+1). \]
The set $\Gell$ is nonempty, and equality holds if and only if $G \in \Gell$.
\end{thm}

\begin{proof}
The fact that $\Gell$ is nonempty follows from $n\ge \ell+5 + \frac{4(\ell+4)}{r-4} \ge (1+1/(r-1))(\ell+1)+1$ for all $r \ge 8$ and \cref{def:graphs}. 
With $\ell$, $r$, $n$, and $G$ as in the theorem, suppose that for some $1 \le j \le (n-1-\ell)/2$ we have $d_j \le j+\ell$, so $G$ contains a set of $j$ vertices of degree at most $j+\ell$. Let $J$ be the subgraph of $G$ induced by such a set of $j$ vertices. Let $H := G-V(J)$. Notice 
\begin{equation}\label{eq:ej8}
j^2 + \ell j \geq \sum_{v \in V(J)} d_G(v) = \sum_{v \in V(J)} d_J(v) + \sum_{v \in V(J)} d_H(v) = 2e(J)+|\partial J| \geq e(J)+|\partial J|.\end{equation}
As $H$ is $K_{r+1}$-free, by Tur\'{a}n's theorem we have 
\begin{align}
    e(G) &= e(J) + \abs{\partial J} + e(H)\\
        &\le  j^2+\ell j + e(\T[n-j][r])\label{eq:Hbound8},
\end{align}
and equality holds if and only if
$e(J)=0$ by \cref{eq:ej8}, every vertex $v \in V(J)$ has $d(v) = j+\ell$ by \cref{eq:ej8}, and $H\cong \T[n-j][r]$ by \cref{eq:Hbound8} and \cref{thm:turan}. 
In the special case $(\ell,r,n)=(1,8,10)$, we have $j \in \set{1,2,3,4}$, and \cref{eq:Hbound8} shows that $e(G) < e(\T[n-1][r])+(\ell+1)$ except when $j=1$.

\begin{case}$j=1$

Substituting $j=1$ into \cref{eq:Hbound8}, we have \begin{align*}
    e(G) &\le e(\T[n-1][r])+(\ell+1).
\end{align*} If equality holds, then $H \cong \T[n-1][r]$ and the other vertex of $G$ has degree $j+\ell$, so $G \in \Gell$.

Conversely, if $G \in \Gell$, then, by \cref{prop:gell is extremal}, $G$ satisfies all hypotheses of the theorem and $e(G) = e(\T[n-1][r])+(\ell+1)$.
\end{case}

\begin{case} $2 \le j \le (n-1-\ell)/2$

Let $s = (n-1)\mod r$. 
First we prove
\begin{equation}\label{eq:123}
\frac{r-1}{2r}\left(2n-2\right)-\frac{j}{2}+\frac{s}{r}\\
    \ge j+1+\ell
\end{equation}
as follows. 
Notice 
\begin{align}
    \frac{2(r-1)n-2(2r-1)-2r\ell+2s}{3r}-j &\ge \frac{2(r-1)n-2(2r-1)-2r\ell+2s}{3r} - \left(\frac{n-1-\ell}{2}\right) \nonumber\\
    &= \frac{(r-4)n-r(\ell+5)+4(s+1)}{6r} \nonumber\\
    &\ge \frac{(r-4)n-r(\ell+5) + 4}{6r}\quad\text{as }s\ge 0\label{eq:s0}\\
    &= \frac{r-4}{6r}\left(n-\frac{r(\ell+5)-4}{r-4}\right)\ge 0.\nonumber
\end{align}
We obtain \cref{eq:123} by multiplying both sides of 
\[
\frac{2(r-1)n-2(2r-1)-2r\ell+2s}{3r} - j \ge 0
\]
by $3/2$, then adding $j+1+\ell$ to both sides. Note that $s-j+1$ is $(n-j) \mod r$ if $s \ge j-1$ and is negative otherwise. Letting $f(x) = x(r-x)$, in both cases we have $f(s-j+1) \le f((n-j)\mod r)$ since $f(x) < 0$ for $x < 0$ and $f(x) \ge 0$ for $0 \le x \le r-1$. Now, by \cref{prop:turanedgecount}, we have \begin{align}
    &e(\T[n-1][r])-e(\T[n-j][r])\nonumber\\
    &= \frac{r-1}{2r}(n-1)^2 - \frac{s(r-s)}{2r} - \left(\frac{r-1}{2r}(n-j)^2 - \frac{((n-j)\mod r)(r-((n-j)\mod r))}{2r}\right)\nonumber\\
    &\ge \frac{r-1}{2r}(n-1)^2 - \frac{s(r-s)}{2r} - \left(\frac{r-1}{2r}(n-j)^2 - \frac{(s-j+1)(r-(s-j+1))}{2r}\right)\label{eq:sineq}\\
    &=(j-1)\left(\frac{r-1}{2r}\left(2n-2\right)-\frac{j}{2}+\frac{s}{r}\right)\nonumber\\
    &\ge (j-1)(j+1+\ell)\quad \text{ by \cref{eq:123}}\nonumber \\
    &= j^2+\ell j -(\ell+1),\nonumber 
\end{align} 
which then implies $e(G) \le e(\T[n-1][r])+(\ell+1)$ by \cref{eq:Hbound8}.  
The inequality is strict when $s\ne 0$ by \cref{eq:s0} and strict when $s=0$ by \cref{eq:sineq} (using the fact that $j > 1$, so $s-j+1 < 0$ and $f(s-j+1) < 0 \le f((n-j)\pmod{r})$).\qedhere
\end{case}
\end{proof}

\subsection{$4 \le r \le 7$}
In this subsection we prove an upper bound on the number of edges in $n$-vertex, $K_{r+1}$-free graphs for $4 \le r \le 7$, where by \cref{prop:turanedgecount} we have $e(\T[n][r]) = \floor{\frac{r-1}{2r}n^2}$ for every $n \ge r$.

\begin{thm}\label{thm:kr+1general}
Let $\ell \ge -1$ be an integer, $4 \le r \le 7$, and $n \geq \max\{3+ \ell + \frac{4(\ell+2)}{r-3},5+\ell+\frac{r+2\ell+7}{2r-2}\}$. Let $G$ be an $n$-vertex, $K_{r+1}$-free graph with degrees $d_1 \le \cdots \le d_n$. If there is an integer $j$ in $1 \le j \le (n-1-\ell)/2$ such that $d_j \le j+\ell$, then 
\begin{align*}
    e(G) &\le \Big\lfloor{\frac{r-1}{2r}n^2-\Big(\frac{r-1}{r}n-\ell\Big)j+\frac{3r-1}{2r}j^2}\Big\rfloor\\
    &\le e(\T[n-1][r]) + (\ell+1)=\floor[\Big]{\frac{r-1}{2r}n^2 - \frac{r-1}{r}n +\frac{2\ell r + 3r-1}{2r}}.
\end{align*}
The set $\Gell$ is nonempty, and if $G \in \Gell$ then equality holds. Furthermore, if $e(G)=e(\T[n-1][r]) + (\ell+1)$, then either $G \in \Gell$ (so $G$ satisfies $d_j \le j + \ell$ for $j=1$), or $G \in \Hl$ (so $G$ satisfies $d_j \le j + \ell$ for $j=(n-1-\ell)/2$). 
\end{thm}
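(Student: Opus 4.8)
The plan is to follow the proof of \cref{thm:kr+1general8}, replacing the estimate $e(\T[m][r])\le\tfrac{r-1}{2r}m^2$ by the exact identity $e(\T[m][r])=\floor{\tfrac{r-1}{2r}m^2}$ of \cref{prop:turanedgecount}, valid for $4\le r\le 7$. Let $J$ be the subgraph of $G$ induced on a set of $j$ vertices of degree at most $j+\ell$, and put $H\defeq G-V(J)$. Summing degrees over $V(J)$ gives $j^2+\ell j\ge 2e(J)+\abs{\partial J}\ge e(J)+\abs{\partial J}$, so, since $H$ is $K_{r+1}$-free, Tur\'an's theorem yields
\[
e(G)=e(J)+\abs{\partial J}+e(H)\le j^2+\ell j+e(\T[n-j][r])=\floor[\Big]{\tfrac{r-1}{2r}n^2-\Bigl(\tfrac{r-1}{r}n-\ell\Bigr)j+\tfrac{3r-1}{2r}j^2},
\]
the last equality holding because $j^2+\ell j$ is an integer and \cref{prop:turanedgecount} applies to $\T[n-j][r]$. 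Write $q(j)$ for the quantity inside the final floor; this is the first claimed bound, and moreover equality $e(G)=\floor{q(j)}$ forces $e(J)=0$, $d_G(v)=j+\ell$ for every $v\in V(J)$, and $H\cong\T[n-j][r]$.

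Next I would reduce to the endpoints of the range $1\le j\le(n-1-\ell)/2$. Since the coefficient $\tfrac{3r-1}{2r}$ of $j^2$ is positive, $q$ is a convex quadratic in $j$, so on this interval it is maximized at $j=1$ or at $j^\star\defeq(n-1-\ell)/2$. A short computation gives $q(1)=\tfrac{r-1}{2r}(n-1)^2+\ell+1$, so $\floor{q(1)}=e(\T[n-1][r])+\ell+1$, which is exactly the target value. The inequality $q(j^\star)\le q(1)$, after clearing denominators and using $r\ge 4$, is equivalent to $n\ge\tfrac{(3r-1)+(r+1)\ell}{r-3}$, which is precisely the first hypothesis $n\ge 3+\ell+\tfrac{4(\ell+2)}{r-3}$. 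Hence $q(j)\le q(1)$ for all $j$ in the range, and $e(G)\le\floor{q(j)}\le\floor{q(1)}=e(\T[n-1][r])+\ell+1$.

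For the equality characterization, suppose $e(G)=e(\T[n-1][r])+\ell+1=\floor{q(1)}$. Then every inequality above is tight, so $e(J)=0$, each vertex of $J$ has degree exactly $j+\ell$, $H\cong\T[n-j][r]$, and $\floor{q(j)}=\floor{q(1)}$. If $j=1$ this says $G$ is $\T[n-1][r]$ plus a vertex of degree $\ell+1$, i.e.\ $G\in\Gell$; if $j=j^\star$ it says $G$ is $\T[(n+1+\ell)/2][r]$ plus an independent set of $(n-1-\ell)/2$ vertices of degree $(n-1+\ell)/2$, i.e.\ $G\in\Hl$. To exclude all other $j$, note that $j\mapsto q(1)-q(j)$ is concave, so over the remaining integers $2\le j<j^\star$ it is minimized at an endpoint; it therefore suffices to verify $q(1)-q(2)\ge 1$ and $q(1)-q(j^\star-1)\ge 1$. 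The former is equivalent to $n\ge\tfrac{(11r-3)+2r\ell}{2(r-1)}$, which is exactly the second hypothesis $n\ge 5+\ell+\tfrac{r+2\ell+7}{2r-2}$; the latter follows from the first hypothesis together with the observation that the interior range being nonempty already forces $n$ large enough that $q(j^\star)-q(j^\star-1)=\tfrac{(r+1)n-6r+2-(r-1)\ell}{2r}\ge 1$ for $r\le 7$. Since $q(1)-q(j)\ge 1$ gives $q(j)\le q(1)-1<\floor{q(1)}$, hence $\floor{q(j)}<\floor{q(1)}$, such $j$ contradict $\floor{q(j)}=\floor{q(1)}$. When $n\equiv\ell\pmod 2$ the same estimates, applied to the largest admissible integer $j$ (now $<j^\star$, and $\Hl=\varnothing$), show it cannot give equality, leaving only $G\in\Gell$. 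Thus $j\in\{1,j^\star\}$, as claimed.

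The main obstacle is the floor-function bookkeeping rather than any single hard inequality. Replacing each $e(\T[m][r])$ by $\tfrac{r-1}{2r}m^2$ would erase exactly the phenomenon the theorem records: unlike the $r\ge 9$ part of \cref{thm:kr+1general8}, the family $\Hl$ genuinely occurs at equality for $4\le r\le 7$, because the two hypotheses on $n$ can sit below the cutoff $n\le 4r-\ell-3$ of \cref{Hfamsize}, so the configuration $j=j^\star$ cannot be ruled out. The care goes into pinning down the two thresholds: the first makes $q(j^\star)\le q(1)$, so $j=j^\star$ cannot beat $j=1$; the second separates $q$ at every interior $j$ from $q(1)$ by a full unit, so no interior $j$ attains equality. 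The convexity arguments and the arithmetic verifications are routine once one commits to these exact bounds and keeps track of which integer jumps in $\floor{q(j)}$ are actually possible.
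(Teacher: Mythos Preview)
Your approach mirrors the paper's: the same decomposition into $J$ and $H$, the same bound $e(G)\le\lfloor q(j)\rfloor$, and the same reduction to $j\in\{1,j^\star\}$ for equality. Using convexity of $q$ to get $\lfloor q(j)\rfloor\le\lfloor q(1)\rfloor$ directly is clean, and your identification of the two hypotheses with the checks $q(j^\star)\le q(1)$ and $q(1)-q(2)\ge1$ is exactly right. But the exclusion of interior $j$ near the right end has a gap.

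You assert that ``the interior range being nonempty already forces $n$ large enough that $q(j^\star)-q(j^\star-1)\ge 1$.'' This is false as stated: for $r=7$, $\ell=-1$, $n=5$ both hypotheses hold ($B_1=3$, $B_2=5$) and the interior range contains $j=2$, yet $q(j^\star)-q(j^\star-1)=\tfrac{(r+1)n-6r+2-(r-1)\ell}{2r}=\tfrac{3}{7}<1$. What you actually need is weaker---the right-endpoint check is only distinct from the left-endpoint check when the largest integer $M<j^\star$ satisfies $M\ge3$, i.e.\ when $n\ge 8+\ell$---and \emph{that} condition does imply $n\ge\tfrac{8r-2+(r-1)\ell}{r+1}$ for $\ell\ge-1$. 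But you have not argued this.

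A second, related issue: when $n\equiv\ell\pmod 2$ the largest admissible integer is $M=j^\star-\tfrac12$, not $j^\star-1$. Your decomposition $g(M)=g(j^\star)+[q(j^\star)-q(M)]$ then requires $q(j^\star)-q(j^\star-\tfrac12)\ge 1$, which is a \emph{different} inequality (equivalent to $n\ge\tfrac{17r-3+2(r-1)\ell}{2(r+1)}$), and concavity only gives $g(j^\star-\tfrac12)\ge\tfrac12\bigl(g(j^\star-1)+g(j^\star)\bigr)\ge\tfrac12$ from what you have established. Saying ``the same estimates'' apply glosses over a genuinely separate computation. (It does go through once $M\ge3$, but this must be checked.)

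The paper avoids both issues by treating all $2<j<j^\star$ uniformly: it replaces the $1$ lost to the floor by $\tfrac{(3r-1)(j-1)}{4r}$, which exceeds $1$ for every $j\ge3$, after which $q(1)-q(j)-\tfrac{(3r-1)(j-1)}{4r}$ factors as $\tfrac{(j-1)(3r-1)}{2r}\bigl(\tfrac{2(r-1)}{3r-1}n-\tfrac{2r\ell}{3r-1}-(j+1)-\tfrac12\bigr)$, and the inner bracket is nonnegative for every $j\le\tfrac{n-2-\ell}{2}$ precisely by the first hypothesis. This single computation covers both parities of $n-\ell$ with no right-endpoint casework.
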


\begin{proof}
    The fact that $\Gell$ is nonempty follows from $n \geq 3+ \ell + \frac{4(\ell+2)}{r-3} \ge (1+1/(r-1))(\ell+1)+1$ for all $4 \le r \le 7$ and \cref{def:graphs}. 
    Let $\ell \ge -1$ be an integer and $G$ be an $n$-vertex, $K_{r+1}$-free graph. Suppose for some $1 \le j \le (n-1-\ell)/2$ we have $d_j \le j+\ell$, so $G$ contains a set of $j$ vertices of degree at most $j+\ell$. Let $J$ be the subgraph of $G$ induced by such a set of $j$ vertices. Notice 
\begin{equation}\label{eq:ej}
j^2 + \ell j \geq \sum_{v \in V(J)} d(v) = 2e(J)+|\partial J| \geq e(J)+|\partial J|.\end{equation}
Let $H := G-V(J)$. As $H$ is $K_{r+1}$-free, by Tur\'{a}n's theorem and \cref{prop:turanedgecount} we have 
\begin{align}
    e(G) &= e(J) + \abs{\partial J} + e(H)\\
        &\le  j^2+\ell j + \Big\lfloor{\frac{r-1}{2r}(n-j)^2}\Big\rfloor\label{eq:2}\\
        &= \Big\lfloor{\frac{r-1}{2r}n^2-\Big(\frac{r-1}{r}n-\ell\Big)j+\frac{3r-1}{2r}j^2}\Big\rfloor,\label{eq:4}
\end{align}
and if equality holds then
$e(J)=0$ by \cref{eq:ej}, every vertex $v \in V(J)$ has $d(v) = j+\ell$ by \cref{eq:ej}, and $H\cong \T[n-j][r]$ by \cref{eq:2} and \cref{thm:turan}. 
We show that the bound given by \cref{eq:4} is at most $e(\T[n-1][r]) + (\ell+1)$ for $j=1$ and $j= (n-1-\ell)/2$. Using \cref{eq:4}, for $j=1$ we have 
\begin{align*}
& \frac{r-1}{2r}n^2 -\Big(\frac{r-1}{r}\Big)n+\frac{2\ell r +3r-1}{2r} - \left(\frac{r-1}{2r}n^2-\Big(\frac{r-1}{r}n-\ell\Big)j+\frac{3r-1}{2r}j^2\right)= 0. 
\end{align*} Using \cref{eq:4} for $j=(n-1-\ell)/2$ we have 
\begin{align*}
& \frac{r-1}{2r}n^2 -\Big(\frac{r-1}{r}\Big)n+\frac{2\ell r +3r-1}{2r} - \left(\frac{r-1}{2r}n^2-\Big(\frac{r-1}{r}n-\ell\Big)j+\frac{3r-1}{2r}j^2\right)\\
&= \left(\frac{r-1}{r}n-\ell\right)(j-1) - \frac{3r-1}{2r}(j^2-1)= \frac{j-1}{2r} \left( 2(r-1)n-
2\ell r - (3r-1)(j+1)\right)\\
&= \frac{j-1}{2r} \left( 2(r-1)n-
2\ell r - (3r-1)\left(\frac{n+1-\ell}{2}\right)\right)\\
&= \frac{(j-1)(r-3)}{4r} \left( n- \frac{\ell(r+1)}{r-3} - \frac{3r-1}{r-3} \right)= \frac{(j-1)(r-3)}{4r} \left( n- 3 -\ell- \frac{4(\ell+2)}{r-3} \right) \ge 0,
\end{align*} where the last inequality follows from $r\ge 4$ and $n \ge 3+\ell+\frac{4(\ell+2)}{r-3}$.
Therefore
\begin{align*}
    e(G) &\le \floor[\Big]{\frac{r-1}{2r}n^2-\Big(\frac{r-1}{r}n-\ell\Big)j+\frac{3r-1}{2r}j^2}\le \floor[\Big]{\frac{r-1}{2r}n^2 -\Big(\frac{r-1}{r}\Big)n+\frac{2\ell r +3r-1}{2r}} \\
    &= e(\T[n-1][r]) + (\ell+1)
\end{align*} 
for $j=1$ and $j=(n-1-\ell)/2$. In the case that $j=1$, $H \cong \T[n-1][r]$, and the other vertex of $G$ has degree $j+\ell$, so $G \in \Gell[r][n][\ell]$. 

Conversely, if $G \in \Gell[r][n][\ell]$, then by \cref{prop:gell is extremal} $G$ satisfies all hypotheses of the theorem and $e(G) = e(\T[n-1][r]) + (\ell+1)$.

We now consider separately when $j=2$ and $2<j<(n-1-\ell)/2$ to show the bound given by \cref{eq:2} is strictly less than $e(\T[n-1][r])+(\ell+1)$ for $1<j<(n-1-\ell)/2$, so if equality holds then $j=1$ and $G \in \Gell$ or $j=(n-1-\ell)/2$ and $G \in \Hl$. 

\begin{case} $j=2$

Using \cref{eq:4}, when $j=2$ we have
\begin{align*}
&e(\T[n-1][r])+(\ell+1)-e(G) \\
&\ge \Big\lfloor{\frac{r-1}{2r}n^2 - \frac{r-1}{r}n+\frac{2\ell r +3r-1}{2r}}\Big\rfloor-\Big\lfloor{\frac{r-1}{2r}n^2-\frac{2(r-1)}{r}n+2\ell+\frac{4(3r-1)}{2r}}\Big\rfloor\\
&>\frac{r-1}{2r}n^2 - \frac{r-1}{r}n+\frac{2\ell r +3r-1}{2r}-1 -\left(\frac{r-1}{2r}n^2-\frac{2(r-1)}{r}n+2\ell+\frac{4(3r-1)}{2r}\right)\\
&= \frac{r-1}{r}n-\frac{4(3r-1)+2\ell r-r+1}{2r}\\ 
&= \frac{r-1}{r}\left(n-\frac{11r+2\ell r-3}{2r-2}\right)\ge 0,
\end{align*}
where the last inequality follows from $r \ge 4$ and $n \ge 5+\ell+\frac{r+2\ell+7}{2r-2} = \frac{11r+2\ell r-3}{2r-2}$.

\end{case}

\begin{case} $2<j<(n-1-\ell)/2$

Since $r \ge 4$, when $2 < j < (n-1-\ell)/2$ we have $j-1 \ge 2$ and
\begin{align}
\frac{(3r-1)(j-1)}{4r} \ge \frac{2(3r-1)}{4r} = 1+\frac{2r-2}{4r}&>1.\label{eq:strict}
\end{align}

We now prove \cref{eq:nbound}.
\begin{align}
     0&\le \frac{2r-2}{3r-1}n-\frac{2\ell r}{3r-1}-1-\frac{n-1-\ell}{2}=\frac{2r-2}{3r-1}n-\frac{2\ell r}{3r-1}-1-\frac{n-2-\ell}{2}-\frac{1}{2}. \label{eq:nbound}
\end{align}
For $r \ge 4$ and $n \geq 3+ \ell + \frac{4(\ell+2)}{r-3}$ we have 
\begin{align*}
    0 &\le n(r-3)-(3+\ell)(r-3)-4(\ell+2)\\
     &= (4r-4)n-2(2\ell r+3r-1)-(n-1-\ell)(3r-1)\\
    0 &\le \frac{2r-2}{3r-1}n-\frac{2\ell r}{3r-1}-1-\frac{n-1-\ell}{2}=\frac{2r-2}{3r-1}n-\frac{2\ell r}{3r-1}-1-\frac{n-2-\ell}{2}-\frac{1}{2},
\end{align*}
completing the proof of \cref{eq:nbound}. Using \cref{eq:4} we have 
\begin{align*}
    &e(\T[n-1][r])+(\ell+1)-e(G) \\
    &\ge \floor[\Big]{\frac{r-1}{2r}n^2 -\frac{r-1}{r}n+\frac{2\ell r +3r-1}{2r}}-\floor[\Big]{\frac{r-1}{2r}n^2-\Big(\frac{r-1}{r}n-\ell\Big)j+\frac{3r-1}{2r}j^2}\\
    &> \frac{r-1}{2r}n^2 -\frac{r-1}{r}n+\frac{2\ell r +3r-1}{2r}-\left(\frac{r-1}{2r}n^2-\Big(\frac{r-1}{r}n-\ell\Big)j+\frac{3r-1}{2r}j^2 +1\right)\\
    &>\frac{r-1}{2r}n^2 -\frac{r-1}{r}n+\frac{2\ell r +3r-1}{2r}-\left(\frac{r-1}{2r}n^2-\Big(\frac{r-1}{r}n-\ell\Big)j+\frac{3r-1}{2r}j^2 +\frac{(3r-1)(j-1)}{4r}\right)\\
    &= \Big(\frac{r-1}{r}n-\ell\Big)(j-1)- \left(\frac{3r-1}{2r}(j^2-1) + \frac{(3r-1)(j-1)}{4r}\right)\\
    &= \frac{(j-1)(3r-1)}{2r}\left(\frac{2r-2}{3r-1}n-\frac{2\ell r}{3r-1}-j-1-\frac{1}{2}\right)\\
    &\ge \frac{(j-1)(3r-1)}{2r}\left(\frac{2r-2}{3r-1}n-\frac{2\ell r}{3r-1}-1-\frac{n-2-\ell}{2}-\frac{1}{2}\right)\ge 0,
\end{align*}
where the last inequality follows from \cref{eq:nbound}. \qedhere
\end{case}
\end{proof}

\subsection{$r=3$}

Now we address the most difficult case, $r=3$. Here the following additional lemma is needed. We use it only when $r=3$ but state it for general $r$.

\begin{lem} \label{lem:k3Jbound}
Let $j \ge 0$ be an integer and $r \ge 1$. Let $G$ be a $K_{r+1}$-free graph on $n$ vertices containing at least $j$ vertices of degree at most $d$. Let $J$ be the subgraph of $G$ induced by such a set of $j$ vertices, and let $H:=G-V(J)$. Let $i$ be the maximum integer such that $H$ contains $i$ disjoint copies of $K_r$. Then \[e(J) + \abs{\partial J} \le \floor[\Big]{\frac{j}{2}(n-j-i+d)}.\]
\end{lem}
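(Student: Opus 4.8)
The quantity we want to bound, $e(J)+|\partial J|$, counts each edge of $J$ twice and each edge of $\partial J$ once, so it equals $\frac12\sum_{v\in V(J)}(d_J(v)+d_H(v)) + \frac12\sum_{v\in V(J)}d_J(v) = \frac12\bigl(\sum_{v\in V(J)}d_G(v) + 2e(J)\bigr)$; more usefully, $e(J)+|\partial J| = \sum_{v\in V(J)}d_G(v) - e(J)$. Since every vertex of $J$ has degree at most $d$, we get $e(J)+|\partial J|\le jd - e(J)$, but this is too weak. The plan instead is to bound $|\partial J|$ more carefully by exploiting the $i$ disjoint $K_r$'s in $H$: a vertex $v\in V(J)$ cannot be adjacent to all $r$ vertices of any of these cliques, or else $v$ together with that clique forms a $K_{r+1}$. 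Hence $v$ misses at least one vertex in each of the $i$ disjoint copies of $K_r$, so $d_H(v)\le (n-j) - i$, and therefore $d_G(v) = d_J(v)+d_H(v)\le (j-1) + (n-j-i) = n-1-i$ if we only use that $J$ has $j$ vertices. Combined with $d_G(v)\le d$, we get $d_G(v)\le \min\{d,\,n-1-i\}$; but we want a bound that interpolates, so the real move is to keep $d_J(v)$ symbolic.

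The key computation: write $e(J)+|\partial J| = \sum_{v\in V(J)} d_G(v) - e(J) = \sum_{v\in V(J)}\bigl(d_J(v)+d_H(v)\bigr) - e(J)$. Using $d_H(v)\le n-j-i$ for each $v\in V(J)$ (from the disjoint-cliques argument, valid since $n\ge j+ir$ guarantees the $i$ cliques live in $H$ which has $n-j$ vertices), this is at most $\sum_{v} d_J(v) + j(n-j-i) - e(J) = 2e(J) + j(n-j-i) - e(J) = e(J) + j(n-j-i)$. Now I need an upper bound on $e(J)$. On one hand $e(J)\le\binom{j}{2}$; on the other, summing degrees, $2e(J) = \sum_{v\in V(J)} d_J(v) \le \sum_{v\in V(J)}\min\{j-1,\ d - d_H(v)\}$. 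The cleanest sufficient estimate is $2e(J)\le \sum_{v} (d - d_H(v))$ is not directly available; instead use $2e(J)\le j(j-1)$ and also $e(J)+|\partial J|\le \sum_v d_G(v)\le jd$. Averaging: from $e(J)+|\partial J|\le e(J) + j(n-j-i)$ and $e(J)+|\partial J| \le jd - e(J)$ (since $e(J)+|\partial J| = \sum_v d_G(v) - e(J) \le jd - e(J)$), adding these two inequalities gives $2\bigl(e(J)+|\partial J|\bigr) \le j(n-j-i) + jd$, hence $e(J)+|\partial J|\le \frac{j}{2}(n-j-i+d)$, and since the left side is an integer we may take the floor. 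That is exactly the claimed bound.

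The main obstacle to watch for is making the disjoint-$K_r$ argument airtight: I must confirm that the $i$ copies of $K_r$ in the definition are vertex-disjoint and lie entirely in $H$ (they do, by the statement of the lemma, and $n\ge j+ir$ ensures $H$ is large enough to contain $ir$ distinct vertices in those cliques), and that no vertex of $J$ is complete to any one of them — this is immediate from $K_{r+1}$-freeness of $G$. The only subtlety is that different vertices of $J$ may miss different vertices of the cliques, but that is fine: each individual $v\in V(J)$ satisfies $d_H(v)\le (n-j)-i$ because within each of the $i$ disjoint cliques it has at least one non-neighbor, and these non-neighbors for a fixed $v$ are in distinct cliques hence distinct vertices. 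Everything else is the elementary two-inequality averaging above, and the floor comes for free since $e(J)+|\partial J|\in\mathbb{Z}$. I would present the proof in precisely that order: set up $J$, $H$, $i$; establish $d_H(v)\le n-j-i$ for $v\in V(J)$; derive the two inequalities $e(J)+|\partial J|\le e(J)+j(n-j-i)$ and $e(J)+|\partial J|\le jd - e(J)$; add and divide by $2$; take the floor.
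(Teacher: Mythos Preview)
Your proof is correct and uses essentially the same approach as the paper: both rely on the two bounds $d_G(v)\le d$ and $d_H(v)\le n-j-i$ (the latter coming from the fact that $v$ misses at least one vertex in each of the $i$ disjoint $K_r$'s in $H$). The only cosmetic difference is that the paper combines these in a single step by writing $e(J)+|\partial J|=\sum_{v\in V(J)}\bigl(\tfrac12 d_G(v)+\tfrac12 d_H(v)\bigr)$ and bounding each summand directly, whereas you obtain the same result by averaging the two inequalities $e(J)+|\partial J|\le jd-e(J)$ and $e(J)+|\partial J|\le e(J)+j(n-j-i)$.
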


\begin{proof}
    Let $K$ be the induced subgraph in $H$ on the vertices of $i$ disjoint copies of $K_r$. Let $H' := H-V(K)$, so $\abs{V(H')}= \abs{V(H)} - \abs{V(K)} = n-j-ir$. As $G$ is $K_{r+1}$-free, each $v \in V(J)$ has at most $r-1$ neighbors in each copy of $K_r$ in $K$. By assumption, for every $v \in V(J)$, \[d_J(v)+d_H(v) = d(v) \le d\] where \[d_H(v) = d_{H'}(v) + d_{K}(v) \le (n-j-ir)+i(r-1) = n-j-i.\]Therefore 
    \begin{align*}
        e(J)+\abs{\partial J} &= \frac{1}{2}\sum_{v \in V(J)} d_J(v) + \sum_{v \in V(J)} d_H(v)
        =\sum_{v \in V(J)} \left(\frac{1}{2}(d_J(v)+d_H(v))+\frac{1}{2}d_H(v)\right)\\
        &\le\floor[\Big]{\sum_{v \in V(J)} \frac{1}{2}(d)+\frac{1}{2}(n-j-i)}
        =\floor[\Big]{\frac{j}{2}(n-j-i+d)}.\qedhere
    \end{align*}
\end{proof}

To prove our edge density results when $r=3$, we first prove \cref{thm:k4general} on degree conditions that imply an upper bound on the number of edges. 
We split the proof into two cases, addressed in Theorems \ref{thm:k4general1} and \ref{thm:k4general2}, depending on how $j$ compares to $\frac{n-2}{2}-\frac{3\ell}{4}$. 

\begin{thm}\label{thm:k4general1}
Let $\ell \ge -1$ be an integer and $n \geq 6\ell+26$. Let $G$ be an $n$-vertex, $K_{4}$-free graph with degrees $d_1 \le \cdots \le d_n$. If there is an integer $j$ in $1 \le j \le \frac{n-2}{2}-\frac{3\ell}{4}$ such that $d_j \le j+\ell$, then \[ e(G) \leq e(\T[n-1][3]) + (\ell+1)  = \floor[\Big]{\frac{n^2}{3} - \frac{2n}{3} +\frac{3\ell+4}{3}}. \] Equality holds if and only if $G \in \Gell[3][n]$. 
\end{thm}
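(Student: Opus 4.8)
The plan is to follow the pattern of the proofs of \cref{thm:kr+1general8} and \cref{thm:kr+1general}, bounding $e(H)$ for $H:=G-V(J)$ by Tur\'an's theorem directly; \cref{lem:k3Jbound} is not needed here (it enters only in the complementary range of $j$, handled in \cref{thm:k4general2}). Let $J$ be the subgraph of $G$ induced by a set of $j$ vertices of degree at most $j+\ell$, and set $H:=G-V(J)$, a $K_4$-free graph on $n-j$ vertices. Summing the degrees of the vertices of $J$, exactly as in the proof of \cref{thm:kr+1general8}, gives $e(J)+\abs{\partial J}\le \sum_{v\in V(J)} d_G(v)\le j(j+\ell)$, with equality only if $e(J)=0$ and every vertex of $J$ has exactly $j+\ell$ neighbors, all of them in $H$. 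Combining with $e(H)\le e(\T[n-j][3])$ from \cref{thm:turan} yields
\[
    e(G)=e(J)+\abs{\partial J}+e(H)\le j^2+\ell j + e(\T[n-j][3]),
\]
where equality additionally forces $H\cong \T[n-j][3]$ by the uniqueness in \cref{thm:turan}.

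Next I would carry out the arithmetic comparing this bound to $e(\T[n-1][3])+(\ell+1)$. Using $e(\T[n-1][3])=\floor{(n-1)^2/3}$ and $e(\T[n-j][3])=\floor{(n-j)^2/3}$ from \cref{prop:turanedgecount}, together with the floor estimate $\floor{x}-\floor{y}>x-1-y$, the comparison reduces to the real-valued quantity
\[
    \phi(j)\defeq \frac{(n-1)^2}{3}-\frac{(n-j)^2}{3}-(j^2+\ell j-\ell-1)=\frac{(j-1)\bigl(2n-4j-4-3\ell\bigr)}{3}.
\]
On the interval $1\le j\le \frac{n-2}{2}-\frac{3\ell}{4}$ the numerator $(j-1)(2n-4j-4-3\ell)$ is a product of two nonnegative factors, so $\phi(j)\ge 0$; the floor estimate then gives $e(G)\le e(\T[n-1][3])+(\ell+1)$, and whenever $\phi(j)\ge 1$ it gives the \emph{strict} inequality $e(G)<e(\T[n-1][3])+(\ell+1)$.

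For the equality characterization, suppose $e(G)=e(\T[n-1][3])+(\ell+1)$. Then $e(J)=0$, each $v\in V(J)$ has all $j+\ell$ of its neighbors in $H\cong \T[n-j][3]$, and, since $G$ is $K_4$-free, $N_G(v)$ induces a triangle-free subgraph of $\T[n-j][3]$ and hence meets at most two of its three parts; comparing with the part sizes of $\T[n-j][3]$ gives $j+\ell\le \tfrac{2(n-j)+2}{3}$, i.e.\ $j\le \tfrac{2n-3\ell+2}{5}$. If $j\ge 2$, substituting this back and using $n\ge 6\ell+26$ shows $2n-4j-4-3\ell\ge 3$, so $\phi(j)\ge 1$, contradicting equality. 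Hence $j=1$, so $G$ is $\T[n-1][3]$ together with one further vertex of degree exactly $\ell+1$, that is, $G\in\Gell[3][n]$. Conversely, any $G\in\Gell[3][n]$ is $K_4$-free, has minimum degree $\ell+1$ for $n$ large (so the hypothesis holds with $j=1$), and has exactly $e(\T[n-1][3])+(\ell+1)$ edges, so it attains the bound.

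The main obstacle is the equality analysis, not the bound: the inequality $e(G)\le j^2+\ell j+e(\T[n-j][3])$ is already tight as a real inequality at \emph{both} endpoints $j=1$ and $j=\frac{n-2}{2}-\frac{3\ell}{4}$ (precisely, $\phi$ vanishes there), so a purely arithmetic argument cannot exclude the right endpoint. Excluding it forces one to bring in the structural fact that the neighborhood of a low-degree vertex lies inside the Tur\'an graph $H$ and must be triangle-free, hence misses an entire part of $H$; converting this into the numerical contradiction above is exactly where the hypothesis $n\ge 6\ell+26$ is used.
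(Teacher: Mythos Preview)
Your proof is correct. It differs from the paper's in organization rather than in substance: the paper proceeds by a five-way case analysis on $j$ (treating $j=1$, $j=2$, $j=3$, the open interior $3<j<\tfrac{n-2}{2}-\tfrac{3\ell}{4}$, and the right endpoint separately), and invokes the structural ``triangle-free neighborhood'' argument only at the right endpoint, where $\phi$ vanishes. You instead factor the comparison quantity once as $\phi(j)=\tfrac{(j-1)(2n-4j-4-3\ell)}{3}$, get the bound from $\phi\ge 0$ on the whole interval, and then, assuming equality, apply the triangle-free neighborhood constraint \emph{uniformly} for every $j\ge 2$ to obtain $j\le\tfrac{2n-3\ell+2}{5}$ and hence $\phi(j)\ge 1$, a contradiction. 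Your route is more compact and avoids the ad hoc treatment of $j=2,3$ and the interior; the paper's case split, on the other hand, makes transparent that for $j$ strictly inside the interval one already has $\phi(j)\ge 1$ from $n\ge 6\ell+26$ alone, so the structural input is genuinely required only at the right endpoint. Both arguments use the same three ingredients---the Tur\'an bound on $H$, the floor estimate, and the fact that equality forces each low-degree vertex to miss a full part of $\T[n-j][3]$---and both rely on $n\ge 6\ell+26$ in the same place.
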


\begin{proof} Let $\ell \ge -1$ be an integer and $G$ be an $n$-vertex, $K_{4}$-free graph with degrees $d_1 \le \cdots \le d_n$. Suppose there is an integer $j$ in $1 \le j \le \frac{n-2}{2}-\frac{3\ell}{4}$ such that $d_j \le j+\ell$. 
    Let $J$ be the subgraph of $G$ induced by a set of $j$ vertices of degree at most $j+\ell$. 
    Notice \begin{equation}\label{eq:degsumbound} j^2 + \ell j \geq \sum_{v \in V(J)} d(v) = 2e(J)+|\partial J| \geq e(J)+|\partial J|.\end{equation} Let $H := G-V(J)$. Applying Tur\'{a}n's theorem to $H$, we have 
\begin{equation}
    e(G) = e(J) + \abs{\partial J} + e(H)
        \le  j^2+\ell j + \floor[\Big]{\frac{1}{3}(n-j)^2}
        = \floor[\Big]{\frac{n^2}{3}-\left(\frac{2n}{3}-\ell\right)j+\frac{4}{3}j^2}\label{eq:10}, 
\end{equation}
with equality if and only if $e(J)=0$ (by \cref{eq:degsumbound}), every vertex of $J$ has degree $j + \ell$ (by \cref{eq:degsumbound}), and $H \cong \T[n-j][3]$ (by \cref{thm:turan}).

If $G \in \Gell[3]$ then by \cref{prop:gell is extremal} we have $e(G)=e(\T[n-1][3]) + (\ell+1)$ and $d_1 \le \ell+1$. We consider values of $j$ separately to show that the bound given by \cref{eq:10} is strictly less than $e(\T[n-1][3]) + (\ell+1)$ for all $1<j\le \frac{n-2}{2}-\frac{3\ell}{4}$, so if equality holds then $j=1$, in which case we prove that $G \in \Gell[3]$.

\begin{case}$j=1$

When $j=1$, using \cref{eq:10} we have
\begin{align*}
    e(G) \le 
    \floor[\Big]{\frac{n^2}{3}-\frac{2n}{3}+\frac{3\ell+4}{3}} = e(\T[n-1][3]) + (\ell+1).
\end{align*}
If equality holds, then, by the remarks following \cref{eq:10}, $G$ consists of a subgraph $J$ which is one vertex of degree $\ell+1$ and a subgraph $H \cong \T[n-1]$, and by hypothesis $G$ is $K_4$-free, so $G \in \Gell[3]$.
\end{case}

\begin{case}
    $2 \le j < \frac{n-2}{2}-\frac{3\ell}{4}$

    Using \cref{eq:10}, we have 
\begin{align}
    e(\T[n-1][3]) + (\ell+1) - e(G) &\ge \floor[\Big]{\frac{n^2}{3}-\frac{2n}{3}+\frac{3\ell+4}{3}} - \floor[\Big]{\frac{n^2}{3}-\Big(\frac{2n}{3}-\ell\Big)j+\frac{4}{3}j^2}\nonumber\\
    &>\frac{n^2}{3}-\frac{2n}{3}+\frac{3\ell+4}{3}-1 - \left(\frac{n^2}{3}-\Big(\frac{2n}{3}-\ell\Big)j+\frac{4}{3}j^2\right)\nonumber\\
    &=(j-1)\left(\frac{2n}{3}-\ell-\frac{4j}{3}-\frac{4}{3}-\frac{1}{j-1}\right) \label{eq:ast}.
    \end{align}

    For $2 \le j \le 3$, as $n \ge 6\ell+26$ and $\ell \ge -1$, 
    \[
        \frac{2n}{3} \ge \frac{52+12\ell}{3}
        >\frac{42+3\ell}{3}
        > \frac{4j}{3}+\ell+\frac{4}{3}+\frac{1}{j-1},
    \]
    so the expression (\ref{eq:ast}) is positive.

    For $j \ge 4$, as $j<\frac{n-2}{2}-\frac{3\ell}{4}$ is an integer, we have 
    \begin{align*}
        \frac{n-2}{2}-\frac{3\ell}{4}-\frac{1}{4} &\ge j\\
        n &\ge 2j+\frac{3\ell}{2}+\frac{5}{2}
        \ge 2j+\frac{3\ell}{2}+2+\frac{3}{2(j-1)}\quad\text{using }j\ge 4\\
        \frac{2n}{3} &\ge \frac{4j}{3}+\ell+\frac{4}{3}+\frac{1}{j-1},
    \end{align*} 
    so the expression (\ref{eq:ast}) is positive.
\end{case}

\begin{case}$j=\frac{n-2}{2}-\frac{3\ell}{4}$

Substituting $j=\frac{n-2}{2}-\frac{3\ell}{4}$ into \cref{eq:10} yields
\[
e(G) \le \floor[\Big]{\frac{n^2}{3}-\frac{2n}{3}+\frac{3\ell+4}{3}}=e(\T[n-1][3]) + (\ell+1)
\]
with equality holding if and only if $G$ consists of $H \cong \T[\frac{n+2}{2}+\frac{3\ell}{4}][3]$ and an independent set $V(J)$ of $\frac{n-2}{2}-\frac{3\ell}{4}$ vertices of degree $j+\ell=\frac{n-2}{2}+\frac{\ell}{4}$. Each vertex in $J$ is non-adjacent to $|V(H)|-(j+\ell)=2+\frac{\ell}{2}$ vertices of $H$. If a vertex in J has a neighbor in every partite set of $H$ then $G$ contains $K_4$ as a subgraph, but $G$ is $K_4$-free. Thus each vertex in $J$ is non-adjacent to some partite set of $H$, so $H$ contains a partite set of size at most $2+\frac{\ell}{2}$, which implies
    \begin{align*}
        \frac{n+2}{2}+\frac{3\ell}{4} &= |V(H)| \le 3\Big(3+\frac{\ell}{2}\Big) -1\\
    n &\le 14+\frac{3\ell}{2}.
    \end{align*}
    For every $\ell \ge -1$, we have the bounds $n \ge 6\ell+26 > 14+\frac{3\ell}{2} \ge n$, a contradiction. 
    Therefore equality does not hold for $j=\frac{n-2}{2}-\frac{3\ell}{4}$.\qedhere
\end{case}
\end{proof}

\begin{thm}\label{thm:k4general2}
Let $\ell \ge -1$ be an integer and $n \geq 6\ell+26$. Let $G$ be an $n$-vertex, $K_{4}$-free graph with degrees $d_1 \le \cdots \le d_n$. If there is an integer $j$ in $\frac{n-2}{2}-\frac{3\ell}{4}+\frac{1}{4} \le j \le \frac{n-1-\ell}{2}$ such that $d_j \le j+\ell$, then \[ e(G) < e(\T[n-1][3]) + (\ell+1)  = \floor[\Big]{\frac{n^2}{3} - \frac{2n}{3} +\frac{3\ell+4}{3}}. \] 
\end{thm}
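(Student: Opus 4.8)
The plan is to reuse the setup of the previous theorem: let $J$ be the subgraph of $G$ induced by a set of $j$ vertices of degree at most $j+\ell$, and $H := G - V(J)$. The key difference is that now $j$ is large, roughly $n/2$, so the crude bound $e(J) + |\partial J| \le j^2 + \ell j$ from \cref{eq:degsumbound} combined with Tur\'an on $H$ is too weak — it is exactly the regime where the concave-up quadratic in \cref{eq:10} is increasing again and the estimate fails. This is precisely why \cref{lem:k3Jbound} was introduced, and I would apply it here with $r = 3$ and $d = j + \ell$. So the first step is to let $i$ be the maximum number of disjoint triangles in $H$, check that $n \ge j + 3i$ (which holds since $H$ has $n - j$ vertices and $i$ disjoint triangles use $3i$ of them, so $3i \le n-j$), and conclude from \cref{lem:k3Jbound} that
\[
e(J) + |\partial J| \le \Big\lfloor \tfrac{j}{2}\big(n - j - i + j + \ell\big)\Big\rfloor = \Big\lfloor \tfrac{j}{2}(n - i + \ell)\Big\rfloor .
\]
Meanwhile, since $H$ is $K_4$-free and has exactly $i$ disjoint triangles but no $i+1$, a Tur\'an-type/covering argument bounds $e(H)$: the vertices outside a maximal triangle packing induce a triangle-free graph, and one controls the edges incident to the $3i$ packed vertices; the cleanest route is to note $e(H) \le \binom{3i}{2} + (\text{triangle-free bound on } n-j-3i \text{ vertices}) + 3i(n-j-3i) \le \tfrac{(n-j)^2}{3}$ but with a savings term in $i$ when $i$ is small relative to $n-j$. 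More usefully, I expect one wants a bound of the shape $e(H) \le \tfrac14(n-j)^2 + \tfrac{i}{2}(n - j - i)$ or similar, valid because a $K_4$-free graph with a small triangle packing number is nearly bipartite.

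The heart of the argument is then to combine these two estimates: $e(G) = e(J) + |\partial J| + e(H) \le \tfrac{j}{2}(n - i + \ell) + e(H)$, and show this is strictly less than $e(\T[n-1][3]) + (\ell+1) = \lfloor \tfrac{n^2}{3} - \tfrac{2n}{3} + \tfrac{3\ell+4}{3}\rfloor$ throughout the range $\tfrac{n-2}{2} - \tfrac{3\ell}{4} + \tfrac14 \le j \le \tfrac{n-1-\ell}{2}$. The parameter $i$ cuts both ways — it helps in the $J$-bound (larger $i$ means each $V(J)$ vertex has fewer neighbors in $H$) but hurts in the $e(H)$ bound — so I would treat it as a free variable in $0 \le i \le \lfloor (n-j)/3\rfloor$ and optimize, or split into the cases $i$ small versus $i \approx (n-j)/3$ (the latter being when $H$ is close to the Tur\'an graph $\T[n-j][3]$, in which case $e(J) + |\partial J|$ is forced down substantially). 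Since $j$ is confined to an interval of length only $O(\ell)$ around $n/2$, after substituting $j = n/2 - \Theta(\ell)$ everything becomes a quadratic in $n$ (and linear in $\ell$) which is positive for $n \ge 6\ell + 26$; the threshold $6\ell + 26$ should fall out of making the worst case over $i$ and over the endpoints of the $j$-interval come out nonnegative with a strict margin.

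The main obstacle I anticipate is getting a sharp enough upper bound on $e(H)$ in terms of both $n-j$ and the triangle-packing number $i$: the naive Tur\'an bound $e(H) \le \lfloor (n-j)^2/3 \rfloor$ ignores $i$ entirely and, plugged into the sum with the $J$-estimate, gives roughly $\tfrac{j}{2}(n+\ell) - \tfrac{ij}{2} + \tfrac{(n-j)^2}{3}$, where the $-\tfrac{ij}{2}$ saving is only useful if $i$ is guaranteed to be reasonably large — but $i$ could be as small as $0$ (if $H$ happens to be bipartite) or even if $H$ is triangle-free. Wait: if $i = 0$ then $H$ is triangle-free so $e(H) \le (n-j)^2/4$, a much better bound, which compensates; so really the argument must interpolate, using $e(H) \le \tfrac14(n-j)^2$ when $i$ is small and the $-ij/2$ saving when $i$ is large, and I would make this precise by proving a single inequality $e(J) + |\partial J| + e(H) \le \tfrac{j}{2}(n - i + \ell) + \tfrac14(n-j-i)^2 + \tfrac{i}{2}(n-j-i) + \tfrac{i^2}{2}$ (or whatever the correct clean form is) and then checking it is decreasing in $i$ past a point, so the maximum is at one endpoint, reducing to a computation in $n, \ell$ alone. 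Carefully pinning down that $e(H)$ bound — likely via "delete one vertex from each of the $i$ triangles to get a triangle-free graph on $n-j-i$ vertices, then add back at most $(n-j-1)$ edges per deleted vertex" — is the step that needs the most care, but it is bookkeeping rather than a conceptual difficulty.
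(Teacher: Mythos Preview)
Your plan is essentially the paper's approach: set up $J$ and $H$, let $i$ be the maximum number of disjoint triangles in $H$, invoke \cref{lem:k3Jbound} for $e(J)+|\partial J|$, and split on $i$. The paper takes exactly two cases. When $i=0$, $H$ is triangle-free so $e(H)\le(n-j)^2/4$, and even the crude bound $e(J)+|\partial J|\le j^2+\ell j$ suffices after maximizing the resulting concave-up quadratic at $j=(n-1-\ell)/2$. When $i\ge1$, the paper bounds $e(H)$ by the first decomposition you sketch: with $K$ the subgraph on the $3i$ packed vertices and $H'=H-V(K)$, one has $e(H)=e(K)+|\partial_H K|+e(H')\le 3i^2+2i(n-j-3i)+\lfloor(n-j-3i)^2/4\rfloor$, using $K_4$-freeness both for $e(K)\le e(\T[3i][3])$ and for $|\partial_H K|$ (each vertex of $H'$ sees at most two vertices of each packed triangle), and using that $H'$ is triangle-free by maximality of $i$.

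Two points where your proposal needs tightening. First, your placeholder constants $\binom{3i}{2}$ and $3i(n-j-3i)$ forget to use $K_4$-freeness and are too loose to close the inequality; the sharper $3i^2$ and $2i(n-j-3i)$ are needed. Second, your alternative route ``delete one vertex from each of the $i$ triangles to get a triangle-free graph on $n-j-i$ vertices'' does not work: maximality of the packing only says every triangle of $H$ meets \emph{some} packed triangle, not that it meets every chosen transversal vertex, so the remainder need not be triangle-free. With the correct $e(H)$ bound, adding the \cref{lem:k3Jbound} estimate $\lfloor\tfrac{j}{2}(n-i+\ell)\rfloor$ gives an expression concave-up in $j$ with vertex at $j=2i-\ell$; one checks via $i\le(n-j)/3$ and $n\ge 6\ell+26$ that this vertex lies left of the interval, substitutes $j=(n-1-\ell)/2$, and verifies the strict inequality.
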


\begin{proof}
Let $\ell \ge -1$ be an integer and $G$ be an $n$-vertex, $K_{4}$-free graph with degrees $d_1 \le \cdots \le d_n$. Suppose there is an integer $j$ in $\frac{n-2}{2}-\frac{3\ell}{4}+\frac{1}{4} \le j \le (n-1-\ell)/2$ such that $d_j \le j+\ell$. Let $J$ be the subgraph of $G$ induced by a set of $j$ vertices of degree at most $j+\ell$. 
Notice \begin{equation} j^2 + \ell j \geq \sum_{v \in V(J)} d(v) = 2e(J)+|\partial J| \geq e(J)+|\partial J|.\end{equation} Let $H := G-V(J)$. 

Let $i$ be the maximum integer such that $H$ contains $i$ disjoint copies of $K_3$. We consider separately when $i=0$ and $i \ge 1$.

\begin{case}$i=0$

When $i=0$, $H$ is $K_3$-free, so applying Tur\'{a}n's theorem to $H$, we have \begin{align*}
    e(G) & = e(J) + \abs{\partial J} + e(H)
     \le \floor[\Big]{j^2+\ell j+\frac{(n-j)^2}{4}}\\
    &= \floor[\Big]{\frac{5}{4}j^2+\left(\ell -\frac{n}{2}\right)j+\frac{n^2}{4}}
     \le \floor[\Big]{\frac{5n^2}{16}-\frac{3n}{8}+\frac{\ell n}{8}+\frac{\ell}{8}-\frac{3\ell^2}{16}+\frac{5}{16}},
\end{align*}
where the last inequality follows from maximizing the concave-up quadratic function of $j$ by substituting $j= (n-1-\ell)/2$ since the interval $\frac{n-2}{2}-\frac{3\ell}{4}+\frac{1}{4} \le j \le (n-1-\ell)/2$ lies to the right of the vertex of the parabola at $j=\frac{n-2\ell}{5}$. 
When $\ell = -1$, as $n \ge 6\ell+26$, we have 
\begin{align*}
    e(G) \le \floor[\Big]{\frac{5n^2}{16}-\frac{n}{2}} \le \floor[\Big]{\frac{n^2}{3}-\frac{2n}{3}+\frac{3\ell+4}{3}}-1 < e(\T[n-1][3]) + (\ell+1).
\end{align*}
For $\ell \ge 0$, we have
\begin{align*}
    e(\T[n-1][3]) + (\ell+1) -e(G) &\ge \floor[\Big]{\frac{n^2}{3}-\frac{2n}{3}+\frac{3\ell+4}{3}}-\floor[\Big]{\frac{5n^2}{16}-\frac{3n}{8}+\frac{\ell n}{8}+\frac{\ell}{8}-\frac{3\ell^2}{16}+\frac{5}{16}}\\
    &>\frac{n^2}{3}-\frac{2n}{3}+\frac{3\ell+4}{3}-1-\left(\frac{5n^2}{16}-\frac{3n}{8}+\frac{\ell n}{8}+\frac{\ell}{8}-\frac{3\ell^2}{16}+\frac{5}{16}\right)\\
    &=\frac{1}{48}\left(n^2-14n-6\ell n+1+42\ell+9\ell^2\right)> 0,
\end{align*}
where the last inequality follows from the fact that $\ell \ge 0$ so $n \ge 6\ell+26 > 14+6\ell$ and $1+42\ell+9\ell^2>0$.
\end{case}

\begin{case} $i\ge 1$

For this case, we prove and then use the inequality
\begin{equation}\label{eq:54l}
    36i^2-24i\ell-24i+54\ell+13+9\ell^2 > 0.
\end{equation}
Viewing the left side of \cref{eq:54l} as a concave-up quadratic function of $i$, we see that it is minimized when $i=(\ell+1)/3$, so
\begin{align*}
    36i^2-24i\ell-24i+54\ell+13+9\ell^2 &\ge 36\left(\frac{\ell+1}{3}\right)^2-24\left(\frac{\ell+1}{3}\right)\ell-24\left(\frac{\ell+1}{3}\right)+54\ell+13+9\ell^2\\
    &= (5\ell+1)(\ell+9) > 0
\end{align*}
for all $\ell \ge 0$. For $\ell = -1$, using the fact that $i \ge 1$,
\[
    36i^2-24i\ell-24i+54\ell+13+9\ell^2 = 36i^2-54+13+9 \ge 4 > 0,
\]
completing the proof of \cref{eq:54l}.

As $i \ge 1$, let $K$ be the induced subgraph of $H$ on the vertices of $i$ disjoint copies of $K_3$. Let $H' := H-V(K)$, so $H'$ is $K_3$-free. Since $G$ is $K_4$-free, each vertex of $H$ has at most two neighbors in each of the $i$ disjoint triangles in $K$, and $$\abs{\partial_H K} \le 2i(n-j-3i).$$

As $i$ is the maximum number of disjoint copies of $K_3$ in $H$, we have $$i \le (n-j)/3 \le (2n+3+3\ell)/12,$$ where the last inequality follows from $j \ge \frac{n-2}{2}-\frac{3\ell}{4}+\frac{1}{4}$. For $n \ge 6\ell+26$, we have \begin{equation}\label{iupperbound}
    2i-\ell \le (2n+3-3\ell)/6 <\frac{n-2}{2}-\frac{3\ell}{4}+\frac{1}{4}.
\end{equation}

Applying \cref{lem:k3Jbound} with $d=j+\ell$ and Tur\'{a}n's theorem to $K$ and $H'$, we have \begin{align*}
    e(G)  &= e(J) +|\partial J|+e(K)+\abs{\partial_H K}+e(H')\\
    &\le \floor[\Big]{\frac{j}{2}\left(n-i+\ell\right)+e(\T[3i][3])+2i(n-j-3i)+e(\T[n-j-3i][2]}\\
    &= \floor[\Big]{\frac{n^2}{4}+\frac{in}{2}-\frac{3i^2}{4}-ij+\frac{\ell j}{2}+\frac{j^2}{4}}\\
    &\le \floor[\Big]{\frac{5n^2}{16}-\frac{n}{8}+\frac{\ell n}{8}-\frac{\ell}{8}-\frac{3\ell^2}{16}+\frac{i\ell}{2}+\frac{i}{2}-\frac{3i^2}{4}+\frac{1}{16}},
\end{align*}
where the last inequality follows from maximizing the concave-up quadratic function of $j$ by substituting $j=(n-1-\ell)/2$ since by \cref{iupperbound} the interval $\frac{n-2}{2}-\frac{3\ell}{4}+\frac{1}{4} \le j \le \frac{n-1-\ell}{2}$ lies to the right of the vertex of the parabola $j=2i-\ell$ for $n\ge6\ell+26$. 

Therefore \begin{align*}
    &e(\T[n-1][3]) + (\ell+1)-e(G) \\
    &\ge \floor[\Big]{\frac{n^2}{3}-\frac{2n}{3}+\frac{3\ell+4}{3}}-\floor[\Big]{\frac{5n^2}{16}-\frac{n}{8}+\frac{\ell n}{8}+\frac{1}{16}-\frac{3i^2}{4}+\frac{i}{2}+\frac{i\ell}{2}-\frac{\ell}{8}-\frac{3\ell^2}{16}}\\
    &>\frac{n^2}{3}-\frac{2n}{3}+\frac{3\ell+4}{3}-1-\left(\frac{5n^2}{16}-\frac{n}{8}+\frac{\ell n}{8}+\frac{1}{16}-\frac{3i^2}{4}+\frac{i}{2}+\frac{i\ell}{2}-\frac{\ell}{8}-\frac{3\ell^2}{16}\right)\\
    &=\frac{1}{48}\left(n^2-26n-6\ell n+36i^2-24i\ell-24i+54\ell+13+9\ell^2\right)\ge 0,
\end{align*}
where the last inequality follows from $n^2-26n-6\ell n= n(n-26-6\ell) \ge 0$ and \cref{eq:54l}.\qedhere
\end{case}
\end{proof}

\begin{thm}\label{thm:k4general}
    Let $\ell \ge -1$ be an integer and $n \geq 6\ell+26$. Let $G$ be an $n$-vertex, $K_{4}$-free graph with degrees $d_1 \le \cdots \le d_n$. If there is an integer $j$ in $1 \le j \le \frac{n-1-\ell}{2}$ such that $d_j \le j+\ell$, then \[ e(G) \leq e(\T[n-1][3]) + (\ell+1)  = \floor[\Big]{\frac{n^2}{3} - \frac{2n}{3} +\frac{3\ell+4}{3}}. \] The set $\Gell$ is nonempty, and equality holds if and only if $G \in \Gell[3][n]$.
\end{thm}

\begin{proof}
    The fact that $\Gell$ is nonempty follows from $n \geq 6\ell+26 \ge (1+1/(r-1))(\ell+1)+1$ for $r=3$ and \cref{def:graphs}. Then this theorem follows from Theorems \ref{thm:k4general1} and \ref{thm:k4general2}.
\end{proof}

\section{Edge Conditions and Extremal Graphs}\label{sec:edge}

In this section we use the Chv\'atal-like degree conditions from \cref{thm:kstable} and \cref{table} together with the results of \cref{sec:graphs} and Theorems \ref{thm:kr+1general8}, \ref{thm:kr+1general}, and \ref{thm:k4general} to prove our main results on the maximum numbers of edges in $n$-vertex, $K_{r+1}$-free graphs avoiding Hamiltonicity-like properties and the extremal graphs attaining these values.

\subsection{Lemmas for exceptional extremal graphs}

The lemmas in this subsection are used in the following subsections to characterize the extremal graphs outside of $\Gell[r][n][k]$. Recall that Theorems \ref{thm:kr+1general8} and \ref{thm:k4general} state that for $r=3$ or $r \ge 8$ and sufficiently large $n$, equality holds if and only if $G \in \Gell[r][n][\ell]$, but \cref{thm:kr+1general} leaves open the possibility of extremal graphs in $\Hl$. In this section we determine these exceptional extremal graphs for $4 \le r \le 7$. First we narrow down the values of $n$ where they occur.

\begin{lem}\label{lem:nrange} Let $\ell \ge -1$, $4 \le r \le 7$, and $n \geq \max\{3+ \ell + \frac{4(\ell+2)}{r-3},5+\ell+\frac{r+2\ell+7}{2r-2}\}$ be integers. Let $G$ be an $n$-vertex, $K_{r+1}$-free graph with degrees $d_1 \le \cdots \le d_n$. Suppose that $G \notin \Gell$ 
and \[ e(G) = e(\T[n-1][r]) + (\ell+1)  = \floor[\Big]{\frac{r-1}{2r}n^2 - \frac{r-1}{r}n +\frac{2\ell r + 3r-1}{2r}}.\]
Then 
\begin{itemize}
    \item if $r=4$, then $5\ell+11 \le n \le 13-\ell$,
    \item if $\ell=-1$ and $5 \le r \le 7$, then $n \le 5$,
    \item if $\ell\ge 0$ and $r=5$, then $3\ell+7\le n \le \min\set{17-\ell,10\ell+10}$, and
    \item if $\ell\ge 0$ and $6 \le r \le 7$, then $n \le \min\{4r-3-\ell,6\ell+6\}$.
\end{itemize}
\end{lem}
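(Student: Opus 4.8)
The plan is to push the equality hypothesis through \cref{thm:kr+1general} and then read off the numerics from \cref{Hfamsize}. Since $e(G)=e(\T[n-1][r])+(\ell+1)$ is attained with $j=(n-1-\ell)/2$, \cref{thm:kr+1general} forces $G\in\Hl[r][n][\ell]$ (its other possibility, $G\in\Gell[r][n][\ell]$, would make the $\big((n-1-\ell)/2\big)$-th smallest degree a degree of the Tur\'an graph $\T[n-1][r]$, which for $n$ in the stated range exceeds $j+\ell$, contradicting the hypothesis). In particular $n\not\equiv\ell\pmod 2$, and \cref{Hfamsize} applied to $G$ gives $n\le 4r-\ell-3$: this is the claimed upper bound $13-\ell$ when $r=4$, is $17-\ell$ when $r=5$, and is $4r-3-\ell$ when $6\le r\le 7$. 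The lower bounds need no new work --- they are the hypothesis $n\ge\max\{3+\ell+\tfrac{4(\ell+2)}{r-3},\ 5+\ell+\tfrac{r+2\ell+7}{2r-2}\}$ rewritten after one checks which argument of the maximum dominates: the first argument is $5\ell+11$ and dominates when $r=4$, is $3\ell+7$ and dominates when $r=5$ and $\ell\ge 0$, and in the remaining two regimes the maximum is left as stated.

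For the extra upper bounds $n\le 10\ell+10$ ($r=5$), $n\le 6\ell+6$ ($6\le r\le 7$), and $n\le 5$ ($\ell=-1$, $r\ge 5$), I would count $e(G)$ in two ways. Since $G\in\Hl[r][n][\ell]$, its independent set of $(n-1-\ell)/2$ vertices each sends all $(n-1+\ell)/2$ of its edges into $\T[(n+1+\ell)/2][r]$, so $e(G)=e\big(\T[(n+1+\ell)/2][r]\big)+\tfrac{n-1-\ell}{2}\cdot\tfrac{n-1+\ell}{2}$. Equating this with $e(\T[n-1][r])+(\ell+1)$ and inserting the exact value $e(\T[m][r])=\tfrac{r-1}{2r}m^2-\tfrac{(m\bmod r)(r-(m\bmod r))}{2r}$ from \cref{prop:turanedgecount} (valid for $4\le r\le 7$), the quadratic-in-$n$ terms cancel and one is left with the identity
\[
(j-1)\big[(r-3)(j-1)-2(\ell+2)\big]=s_1(r-s_1)-s_2(r-s_2),
\]
where $j=(n-1-\ell)/2$, $s_1=(n-1)\bmod r$, and $s_2=\big((n+1+\ell)/2\big)\bmod r$. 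The hypothesis $n\ge 3+\ell+\tfrac{4(\ell+2)}{r-3}$ says exactly that $(r-3)(j-1)-2(\ell+2)\ge 0$, so the left side is a product of two nonnegative integers, while the right side is at most $\lfloor r^2/4\rfloor$. If the second factor is $0$ then $n$ equals its minimum value, which one checks directly obeys the claimed inequality; if it is $\ge 1$ then $j-1\le\lfloor r^2/4\rfloor$, and, using that the second factor has fixed parity, a short finite check over the possible residues $s_1,s_2$ and the small values of $\ell$ forces $n\le 10\ell+10$ (resp.\ $6\ell+6$). When $\ell=-1$ and $r\ge 5$ the identity forces $j-1\le 1$, hence $n\le 5$; that regime is in fact vacuous once the parity condition $n\not\equiv\ell\pmod 2$ and the hypothesis are imposed.

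The step I expect to be the main obstacle is this last finite check: carrying $s_1$ and $s_2$ through the cancellation cleanly and then verifying that the (somewhat stronger, $r$-dependent) bound on $j-1$ really sits below the deliberately uniform thresholds $10\ell+10$ and $6\ell+6$ in every residue class and for every small $\ell$. A lighter alternative avoids the two-way count: the $j=(n-1-\ell)/2$ case of \cref{thm:kr+1general} produced a chain $e(G)\le\lfloor B_1\rfloor\le\lfloor B_2\rfloor=e(\T[n-1][r])+(\ell+1)$ with $B_2-B_1=\tfrac{(j-1)(r-3)}{4r}\big(n-3-\ell-\tfrac{4(\ell+2)}{r-3}\big)\ge 0$; equality throughout forces $\lfloor B_1\rfloor=\lfloor B_2\rfloor$ and hence $B_2-B_1<1$, once more a quadratic inequality in $n$ whose solution set, intersected with the parity condition, yields the stated ranges. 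I would write out whichever of the two is shorter.
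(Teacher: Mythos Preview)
Your reduction to $G\in\Hl$ via \cref{thm:kr+1general} and the consequent bound $n\le 4r-\ell-3$ from \cref{Hfamsize}, together with the observation that the stated lower bounds are just the hypothesis, matches the paper exactly. (The parenthetical ruling out $\Gell$ is correct but unnecessary: once you invoke \cref{thm:kr+1general} with the specific witness $j=(n-1-\ell)/2>1$, its equality clause already lands you in $\Hl$.)

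For the three sharper upper bounds ($n\le 5$ when $\ell=-1$, $n\le 10\ell+10$ when $r=5$, $n\le 6\ell+6$ when $r\ge 6$) your two proposals both differ from the paper. Your exact two-way count and the resulting identity $(j-1)[(r-3)(j-1)-2(\ell+2)]=s_1(r-s_1)-s_2(r-s_2)$ is correct, and so is your floor argument $B_2-B_1<1$; either can be pushed through, and the second is probably cleaner than you fear. But the paper sidesteps the residue bookkeeping entirely by \emph{weakening} the count on $H$: instead of $e(H)=e\big(\T[(n+1+\ell)/2][r]\big)$ it uses the crude $e(H)\le e\big(K_{(n+1+\ell)/2}\big)=\binom{(n+1+\ell)/2}{2}$, obtaining
\[
e(G)\le \frac{n-1-\ell}{2}\cdot\frac{n-1+\ell}{2}+\binom{(n+1+\ell)/2}{2}=\frac{3n^2-4n+2\ell n+1-\ell^2}{8},
\]
an $r$-free quantity. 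Comparing this to $\frac{r-1}{2r}n^2-\frac{r-1}{r}n+\frac{2\ell r+3r-1}{2r}-1$ is then a single clean rational inequality in $n$: one checks directly that it holds for $n\ge 6$ when $\ell=-1$ and $r\ge 5$, for $n\ge 10\ell+11$ when $r\ge 5$, and for $n\ge 6\ell+7$ when $r\ge 6$, each time producing the strict inequality $e(G)<e(\T[n-1][r])+(\ell+1)$ and hence the contrapositive bound. Your approaches would recover slightly sharper information (the actual equality constraint, not just an inequality), at the cost of the finite check you flagged; the paper trades that precision for a residue-free computation. If you do pursue your first route, note that your claim ``for $\ell=-1$ the identity forces $j-1\le 1$'' is too quick---the right-hand side can be as large as $\lfloor r^2/4\rfloor$, so you must actually feed in the specific residues rather than just bound them.
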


\begin{proof}
    Suppose $G \notin \Gell$ and $e(G) = e(\T[n-1][r]) + (\ell+1)$. By \cref{thm:kr+1general}, $G \in \Hl$ so $G$ satisfies $d_j \le j+\ell$ for $j=(n-1-\ell)/2$. By \cref{Hfamsize}, $n \le 4r-3-\ell$ and so for all $r$ we have $3+ \ell + \frac{4(\ell+2)}{r-3} \le n \le 4r-3-\ell$. Substituting $r=4$ and $r=5$ yields the bounds $5\ell+11 \le n \le 13-\ell$ and $3\ell+7\le n \le 17-\ell$, respectively.     

    For $r \ge 5$ we can further restrict these bounds in order to obtain an upper bound not dependent on $r$. By \cref{Hfamsize}, $G$ consists of an independent set $V(J)$ of $(n-1-\ell)/2$ vertices of degree $(n-1+\ell)/2$ and $H \cong \T[(n+1+\ell)/2][r]$.  
    Notice \begin{align*}
e(G) &= \abs{\partial J}+ e(H)\le \frac{n-1-\ell}{2}\cdot \frac{n-1+\ell}{2} + e(K_{\frac{n+1+\ell}{2}})\\
&= \frac{n^2-2n+1-\ell^2}{4}+\frac{1}{2}\Big( \frac{n+1+\ell}{2}\Big)\Big(\frac{n-1+\ell}{2}\Big)= \frac{3n^2-4n+2\ell n+1-\ell^2}{8}.
\end{align*}
We show that, under certain conditions, we have
\begin{equation}\label{eq:lem}
\frac{3n^2-4n+2\ell n+1-\ell^2}{8} \le \frac{r-1}{2r}n^2-\frac{r-1}{r}n+\frac{2\ell r+3r-1}{2r}-1,
\end{equation}
and therefore
\begin{align*}
e(G) &\le \frac{3n^2-4n+2\ell n+1-\ell^2}{8}\le \frac{r-1}{2r}n^2-\frac{r-1}{r}n+\frac{2\ell r+3r-1}{2r}-1\\
&< \Big\lfloor{\frac{r-1}{2r}n^2-\frac{r-1}{r}n+\frac{2\ell r+3r-1}{2r}}\Big\rfloor= e(\T[n-1][r])+(\ell+1),
\end{align*}
contradicting $e(G) = e(\T[n-1][r])+(\ell+1)$. 

When $\ell=-1$ and $n \ge 6$ for $r \ge 5$, we have \begin{align*}
    \frac{3n^2-6n}{8} \le \frac{2n^2-4n-3}{5} \le \frac{r-1}{2r}n^2-\frac{r-1}{r}n+\frac{2\ell r+3r-1}{2r}-1,
\end{align*} as the last expression is a increasing function of $r$ for every fixed $n \ge 2$, so \cref{eq:lem} holds for $\ell=-1$. We now consider when $\ell \ge 0$.

First, when $n \ge 10\ell+11$ for $r \ge 5$,
\begin{align*}
    10\ell &\le n-11\\
    10\ell n-5\ell^2-40\ell \le 10\ell(n-1) &\le (n-11)(n-1) = n^2-12n+11\\
    8n+8\ell n -4\ell^2-32\ell-8 &\le n^2-4n-2\ell n+\ell^2+8\ell+3\\
    \frac{8n+8\ell n -4\ell^2-32\ell-8}{n^2-4n-2\ell n+\ell^2+8\ell+3} &\le 1\\
    4+\frac{8n+8\ell n -4\ell^2-32\ell-8}{n^2-4n-2\ell n+\ell^2+8\ell+3} &\le 5 \le r.
\end{align*} Similarly, when $n \ge 6\ell+7$ for $r \ge 6$,
\begin{align*}
    6\ell &\le n-7\\
    6\ell n-3\ell^2-24\ell \le 6\ell(n-1) &\le (n-7)(n-1) = n^2-8n+7\\
    4n+4\ell n -2\ell^2-16\ell-4 &\le n^2-4n-2\ell n+\ell^2+8\ell+3\\
    \frac{8n+8\ell n -4\ell^2-32\ell-8}{n^2-4n-2\ell n+\ell^2+8\ell+3} &\le 2\\
    4+\frac{8n+8\ell n -4\ell^2-32\ell-8}{n^2-4n-2\ell n+\ell^2+8\ell+3} &\le 6 \le r.
\end{align*} Therefore, when $n \ge 10\ell+11$ for $r \ge 5$, or $n \ge 6\ell+7$ for $r \ge 6$, we have
\begin{align*}
\frac{4n^2-8n+4}{n^2-4n-2\ell n+\ell^2+8\ell+3} &= 4+\frac{8n+8\ell n -4\ell^2-32\ell-8}{n^2-4n-2\ell n+\ell^2+8\ell+3} \le r\\
4n^2-8n+4 &\le rn^2-4rn-2\ell rn+\ell^2 r+8\ell r+3r\\
3rn^2-4rn+2\ell rn+r-\ell^2 r &\le 4rn^2-4n^2-8rn+8n+8\ell r+4r-4 \\
r(3n^2-4n+2\ell n+1-\ell^2) &\le 4(r-1)n^2-8(r-1)n+4(2\ell r+r-1)\\
\frac{3n^2-4n+2\ell n+1-\ell^2}{8} &\le \frac{r-1}{2r}n^2-\frac{r-1}{r}n+\frac{2\ell r+3r-1}{2r}-1,
\end{align*}
so Inequality (\ref{eq:lem}) holds, and we have a contradiction. Therefore, for $\ell=-1$ when $r \ge 5$ we have $n \le 5$. For $\ell \ge 0$, when $r\ge5$ we have $n \le 10\ell+10$, and when $r\ge 6$ we have $n \le 6\ell+6$.
\end{proof}

The following lemma uses \cref{lem:nrange} to simplify our characterization of the exceptional extremal graphs across several properties. We use the notation $\Hlhat$ in Sections \ref{subsec:omnibus} and \ref{subsec:kPH}.

\begin{defn}
    We write $\Hlhat$ for the set of graphs listed in \cref{lem:exceptions}.
\end{defn}

\begin{lem}\label{lem:exceptions}
    If $G \in \Hl$ for $4 \le r \le 7$, $n \ge r+2$, $n \geq \max\{3+ \ell + \frac{4(\ell+2)}{r-3},5+\ell+\frac{r+2\ell+7}{2r-2}\}$, and $e(G) = e(\T[n-1][r])+\ell+1$, then either 
    \begin{itemize}
        \item $\ell=-1$ and $(G,r,n) \in \set{(K_{4,1,1}, 4, 6), (K_{5,1,1,1}, 4, 8)}$,
        \item $\ell=0$ and $(G,r,n) \in \set{(K_{6,2,2,1},4,11), (K_{4,1,1,1},5,7), (K_{5,1,1,1,1},5,9)}$,
        \item $\ell=1$ and $(G,r,n) \in \set{(K_{4,1,1,1,1},6,8), (K_{5,2,1,1,1},5,10), (K_{5,1,1,1,1,1},6,10)}$,
        \item $\ell=2$ and $(G, r, n) \in \set{(K_{6,2,2,2,1}, 5, 13), (K_{5,2,1,1,1,1}, 6, 11), (K_{5,1,1,1,1,1,1}, 7, 11), (K_{4,1,1,1,1,1}, 7, 9)}$,
        \item $\ell=3$ and $(G, r, n) = (K_{5,2,1,1,1,1,1}, 7, 12)$, 
        \item $\ell=4$ and $(G, r, n) \in \set{(K_{6,2,2,2,2,1}, 6, 15), (K_{5,2,2,1,1,1,1}, 7, 13)}$, or
        \item $\ell=6$ and $(G, r, n) = (K_{6,2,2,2,2,2,1}, 7, 17)$.
    \end{itemize} 
    The converse holds too: if $(G,\ell,r,n)$ is in the list above then $G \in \Hl$, $4 \le r \le 7$, $n \ge r+2$, $n \geq \max\{3+ \ell + \frac{4(\ell+2)}{r-3},5+\ell+\frac{r+2\ell+7}{2r-2}\}$, and $e(G) = e(\T[n-1][r])+\ell+1$.
\end{lem}

\begin{proof}As $G \in \Hl$, for $j=(n-1-\ell)/2$, $G$ satisfies the hypotheses of \cref{thm:kr+1general}, so has the maximum number of edges. By \cref{eq:4} with $j=(n-1-\ell)/2$, we have 
        \begin{equation}\label{eq:Hedges}
            e(G) = \Big\lfloor{\frac{r-1}{2r}n^2-\Big(\frac{r-1}{r}n -\ell\Big)\frac{n-1-\ell}{2}+\frac{3r-1}{2r}\cdot\frac{(n-1-\ell)^2}{4}}\Big\rfloor.
        \end{equation}
    \begin{case} $r=4$
    
        By \cref{lem:nrange}, we have $5\ell+11 \le n \le 13-\ell$, which implies $\ell \in \set{-1,0}$. If $\ell=-1$ then, by \cref{lem:nrange}, we have $6 \le n \le 14$, so $n \in \{6,8,10,12,14\}$ (because $\Hl$ is empty if $n\equiv\ell \pmod{2}$). 
        If  $\ell=-1$ and $n \ge 10$, by  \cref{eq:Hedges}, we have \begin{align*}
        e(\T[n-1][4])-e(G)&= \Big\lfloor{\frac{3}{8}(n-1)^2}\Big\rfloor - \Big\lfloor{\frac{3}{8}n^2-\left(\frac{3n}{4}+1\right)\frac{n}{2}+\frac{11n^2}{32}}\Big\rfloor\\
        &> \frac{3n^2}{8}-\frac{3n}{4}+\frac{3}{8}-1 -\left(\frac{11n^2}{32}-\frac{n}{2}\right)
        =\frac{n^2}{32}-\frac{n}{4}-\frac{5}{8}\ge 0,
    \end{align*} contradicting $e(G) = e(\T[n-1][r])$. If $\ell=0$ then $n \in \set{11,13}$. For each of the pairs $(\ell,n)$ identified above, we use \cref{eq:Hedges} to find $e(G)$ and compare it to $e(\T[n-1][4])+\ell+1$. When these numbers are equal, we find the unique extremal graph $G \in \Hl[4][n][\ell]$ using \cref{Hfam} and the extremality of $G$. (In fact for each pair $(\ell,n)$ we can find a unique potential extremal graph $G \in \Hl[4][n][\ell]$ using \cref{Hfam} and the extremality of $G$, but we include this graph in the table below only when it is extremal.) 

        \begin{center}
        \begin{tabular}{lllll}
        $(\ell,n)$ & $e(G)$ & $e(\T[n-1][4])+\ell+1$ & Extremal graph $G \in \Hl[4][n][\ell]$\\\hline
        $(-1,6)$ & 9 & 9 & $K_{4,1,1}$\\
        $(-1,8)$ & 18 & 18 & $K_{5,1,1,1}$ \\
        $(0,11)$ & 38 & 38 & $K_{6,2,2,1}$\\
        $(0,13)$ & 54 & 55 & -
        \end{tabular}
        \end{center}
    \end{case}

    \begin{case} $r=5$

        By \cref{lem:nrange}, if $\ell=-1$, then we have $n \le 5$, but we have assumed that $n \ge r+2 = 7$. Otherwise $\ell \ge 0$, and we have $3\ell+7 \le n \le \min\set{17-\ell, 10\ell+10}$, so $0 \le \ell \le 2$ where $n \not\equiv \ell \pmod 2$. In each case, we 
        check the number of edges in $G$ by \cref{eq:Hedges} and $e(\T[n-1][5])+\ell+1$, and when they are equal determine a unique graph using the extremality of $G$ and \cref{Hfam}. The results are summarized in the following table.

        \begin{center}
        \begin{tabular}{lllll}
        $(\ell,n)$ & $e(G)$ & $e(\T[n-1][5])+\ell+1$ & Extremal graph $G \in \Hl[5][n][\ell]$\\\hline
        $(0,7)$   & 15 & 15 & $K_{4,1,1,1}$\\
        $(0,9)$   & 26 & 26 & $K_{5,1,1,1,1}$\\
        $(1,10)$   & 34 & 34 & $K_{5,2,1,1,1}$\\
        $(1,12)$   & 49 & 50 & -\\
        $(1,14)$   & 67 & 69 & -\\
        $(1,16)$   & 88 & 92 & -\\
        $(2,13)$   & 60 & 60 & $K_{6,2,2,2,1}$\\
        $(2,15)$   & 80 & 81 & -
        \end{tabular}
        \end{center}
        
    \end{case}

    \begin{case} $r=6$
    
        First note $n\ge r+2 =8$. By \cref{lem:nrange}, if $\ell=-1$, we have $n \le 5$, a contradiction. Otherwise we have $\ell \ge 0$ and $\max\set{(7\ell+17)/3,(12\ell+63)/10} \le n \le \min\set{21-\ell,6\ell+6}$. From $(7\ell+17)/3 \le 21-\ell$ we have $\ell\le 4$. If $\ell=0$, we have $n \le 6\ell+6=6$, a contradiction. Thus $1 \le \ell \le 4$.

        By substituting $r=6$ into \cref{eq:Hedges}, and using the expression for $e(\T[n-1][r])+\ell+1$ from \cref{thm:kr+1general}, we have
        \begin{align}\label{eq:twobounds}
            e(G) &= \Big\lfloor{\frac{5}{12}n^2-\Big(\frac{5}{6}n -\ell\Big)\frac{n-1-\ell}{2}+\frac{17}{12}\cdot\frac{(n-1-\ell)^2}{4}}\Big\rfloor\nonumber\\
            &=  \floor{(17n^2 + (10\ell-14)n+(-7\ell^2+10\ell+17))/48}\nonumber\\
            &= \floor[\Big]{\frac{5}{12}n^2 - \frac{5}{6}n+\frac{12\ell+17}{12}} =  \floor{(20n^2-40n+48\ell+68)/48},
        \end{align}
        and for each of the values $\ell \in \set{1,2,3,4}$ one can find a minimum value of $n$ for which $(17n^2 + (10\ell-14)n+(-7\ell^2+10\ell+17))/48 + 1 \le (20n^2-40n+48\ell+68)/48$, contradicting \cref{eq:twobounds} by \cref{floorprop}. In this way we eliminate the cases where $\ell=1$ and $n \ge 11$, $\ell=2$ and $n\ge 13$, $\ell=3$ and $n \ge 15$, and $\ell=4$ and $n \ge 17$. 
        Also using the facts that $n \ge \max\set{(7\ell+17)/3,(12\ell+63)/10}$ and $n \not\equiv \ell \pmod{2}$, the remaining cases are $(\ell,n) \in \set{(1,8), (1,10), (2,11), (3,14), (4,15)}$. Similarly to the $r=4$ and $r=5$ cases, we determine $e(G)$ by \cref{eq:Hedges}, $e(\T[n-1][6])+\ell+1$, and where applicable the unique extremal graph in $\Hl[6][n][\ell]$ using the extremality of $G$ and \cref{Hfam}.

        \begin{center}
        \begin{tabular}{lllll}
        $(\ell,n)$ & $e(G)$ & $e(\T[n-1][6])+\ell+1$ & Extremal graph $G \in \Hl[6][n][\ell]$\\\hline
        $(1,8)$   & 22 & 22 & $K_{4,1,1,1,1}$\\
        $(1,10)$   & 35 & 35& $K_{5,1,1,1,1,1}$\\
        $(2,11)$   & 44 & 44& $K_{5,2,1,1,1,1}$\\
        $(3,14)$   & 73 & 74& -\\
        $(4,15)$   & 86 & 86& $K_{6,2,2,2,2,1}$
        \end{tabular}
        \end{center}
    \end{case}

    \begin{case} $r=7$

        First note $n \ge r+2 = 9$. By \cref{lem:nrange}, if $\ell=-1$, we have $n \le 5$, a contradiction. Otherwise we have $\ell \ge 0$ and $\max\{2\ell+5,\frac{7\ell+37}{6}\} \le n \le \min\{25-\ell,6\ell+6\}$. From $2\ell+6 \le n \le 25-\ell$ we have $\ell \le 6$. If $\ell=0$, then $n \le 6\ell+6=6$, a contradiction. Thus $1 \le \ell \le 6$.

        By substituting $r=7$ into \cref{eq:Hedges}, and using the expression for $e(\T[n-1][r])+\ell+1$ from \cref{thm:kr+1general}, we have
        \begin{align}\label{eq:twobounds7}
            e(G) &= \Big\lfloor{\frac{6}{14}n^2-\Big(\frac{6}{7}n -\ell\Big)\frac{n-1-\ell}{2}+\frac{20}{14}\cdot\frac{(n-1-\ell)^2}{4}}\Big\rfloor\nonumber\\
            &=  \floor{(5n^2 + (3\ell-4)n+(-2\ell^2+3\ell+5))/14}\nonumber\\
            &= \floor[\Big]{\frac{6}{14}n^2 - \frac{6}{7}n+\frac{14\ell+20}{14}} =  \floor{(6n^2-12n+14\ell+20)/14},
        \end{align}
        and for each of the values $\ell \in \set{1,2,3,4,5,6}$ one can find a minimum value of $n$ for which $(5n^2 + (3\ell-4)n+(-2\ell^2+3\ell+5))/14 + 1 \le (6n^2-12n+14\ell+20)/14$, contradicting \cref{eq:twobounds7} by \cref{floorprop}. Thus we eliminate the cases where $\ell=1$ and $n \ge 10$, $\ell=2$ and $n\ge 12$, $\ell=3$ and $n \ge 13$, $\ell=4$ and $n \ge 15$, $\ell=5$ and $n\ge 17$, and $\ell=6$ and $n \ge 19$. Also using $n\ge 2\ell+5$ and $n \not\equiv \ell \pmod{2}$, the remaining cases are the pairs $(\ell,n)$ shown in the leftmost column table in the below.

        \begin{center}
        \begin{tabular}{lllll}
        $(\ell,n)$ & $e(G)$ & $e(\T[n-1][7])+\ell+1$&Extremal graph $G \in \Hl[7][n][\ell]$\\\hline
        $(1,8)$   & 22 & 23 & - \\
        $(2,9)$   & 30 & 30 & $K_{4,1,1,1,1,1}$\\
        $(2,11)$ & 45 & 45 & $K_{5,1,1,1,1,1,1}$ \\
        $(3,12)$ & 55 & 55 & $K_{5,2,1,1,1,1,1}$ \\
        $(4,13)$  & 66 & 66 & $K_{5,2,2,1,1,1,1}$\\
        $(5,16)$ & 101 & 102 & - \\
        $(6,17)$ & 116 & 116 & $K_{6,2,2,2,2,2,1}$ \qedhere
        \end{tabular}
        \end{center}
    \end{case}
\end{proof}

    \subsection{An omnibus theorem}\label{subsec:omnibus}

The following theorem gives the maximum number of edges in $n$-vertex, $K_{r+1}$-free graphs that avoid one of a list of forbidden properties and characterizes the extremal graphs.

\begin{thm}\label{theorem:kr+1all}
Let $G$ be an $n$-vertex, $K_{r+1}$-free graph where $r\ge 3$.

\begin{enumerate}[(a)]
\item\label{part:trace} If $G$ is not traceable and
\[n \ge \begin{cases}20 & \text{if }r=3\\ 
1 & \text{if } r \ge 4
\end{cases},\quad\text{ then } e(G) \leq e(\T[n-1][r]).\] Equality holds if and only if $G$ is $\T[n-1][r]$ plus an isolated vertex, i.e., $G \in \Gell[r][n][-1]$, or $G$ is $K_{3,1}$, $K_{4,1,1}$, or $K_{5,1,1,1}$, 
with the exceptional graphs occurring in the cases where $r \ge 4$ and $n=4$, or $(r,n)$ is $(4,6)$ or $(4,8)$, respectively.

\item \label{part:Ham}If $G$ is not Hamiltonian and \[n \ge \begin{cases}26 & \text{if }r=3\\ 11 & \text{if } r=4\\
2 & \text{if } r \ge 5\end{cases},\quad\text{ then }e(G) \leq e(\T[n-1][r]) + 1.\] Equality holds if and only if $G \in \Gell[r][n][0]$ or $G$ is $K_{3,1,1}$, $K_{6,2,2,1}$, $K_{4,1,1,1}$, or $K_{5,1,1,1,1}$, with the exceptional graphs occurring in the cases where $r \ge 5$ and $n =5$, or $(r,n)$ is $(4,11)$, $(5,7)$, or $(5,9)$, respectively.

\item If $G$ is not Hamiltonian-connected and \[n \ge \begin{cases} 32 & \text{if } r=3\\ 16 & \text{if } r=4\\ 10 & \text{if } r=5\\
4 & \text{if } r \ge 6\end{cases},\quad\text{ then }e(G) \leq e(\T[n-1][r]) + 2.\]  Equality holds if and only if $G \in \Gell[r][n][1]$ or $G$ is $K_{3,1,1,1}$, $K_{4,1,1,1,1}$, $K_{5,2,1,1,1}$, or $K_{5,1,1,1,1,1}$ with the exceptional graphs occurring in the cases where $r \ge 6$ and $n=6$, or $(r,n)$ is $(6,8)$, $(5,10)$ or $(6,10)$, respectively.

\item For every $k \ge 0$, if $G$ is not $k$-Hamiltonian and \[n \ge \begin{cases} 6k+26 & \text{if } r=3\\ 5k+11 & \text{if } 4 \le r \le 7\\ k+5+4(k+4)/(r-4) & \text{if } r \ge 8\end{cases},\quad\text{ then }
 e(G) \leq e(\T[n-1][r]) + (k+1). \] 
Equality holds if and only if $G \in \Gell[r][n][k]$ or $G \in \Hlhat[r][n][k]$.

\item For every $k \ge 1$, if $G$ is not $k$-Hamiltonian-connected and \[n \ge \begin{cases} 6k+26 & \text{if } r=3\\ 5k+11 & \text{if } r=4\\ 3k+7 & \text{if } 5 \le r \le 7\\ k+5+4(k+4)/(r-4) & \text{if } r \ge 8\end{cases},\quad\text{ then }
 e(G) \leq e(\T[n-1][r]) + (k+1). \] 
Equality holds if and only if $G \in \Gell[r][n][k]$ or $G \in \Hlhat[r][n][k]$.

\item\label{part:kconn} For every $k \ge 1$, if $G$ is not $k$-connected and \[n \ge \begin{cases} 6k+14 & \text{if } r=3\\ 5k+1 & \text{if } 4 \le r \le 7\\ 2k+5 & \text{if } r \ge 8\end{cases},\quad\text{ then }
 e(G) \leq e(\T[n-1][r]) + (k-1). \] 
Equality holds if and only if $G \in \Gell[r][n][k-2]$.
\end{enumerate}
In all cases 
$\Gell$ is nonempty, and the bounds are tight.
\end{thm}

\begin{proof}
    The proof of each part (a)--(f) involves selecting an appropriate value of $\ell$ depending on the part, as shown in the following table (and corresponding to the forbidden property being $(n+\ell)$-stable), and then substituting that value for $\ell$ throughout the rest of the proof below. 
    \begin{center}    
    \begin{tabular}{|c|c|c|c|c|c|c|}\hline
        Part & (a) & (b) & (c) & (d) & (e) & (f) \\\hline
        $\ell$ & $-1$ & $0$ & $1$ & $k$ & $k$ & $k-2$\\\hline
    \end{tabular}
    \end{center}

    Using the appropriate value of $\ell$ in each part, equality is achieved for all properties if $G \in \Gell$ by \cref{prop:gell}.

    In each part, for the appropriate value of $\ell$, there is an integer $j$ in $1 \le j \le (n-1-\ell)/2$ such that $d_j \le j+\ell$, using \cref{thm:kstable} and \cref{table}.

    Notice that for all properties, when $r=3$, we have assumed that $n \ge 6\ell+26$. Therefore \cref{thm:k4general} implies that $e(G) \le e(\T[n-1][r]) + (\ell+1)$, that $\Gell$ is nonempty, and that equality holds if and only if $G \in \Gell[r][n][\ell]$ for $r=3$.
    
    For parts (a)--(c) of the theorem (corresponding to $\ell \in \set{-1, 0, 1}$), we address the $n-1 \le r$ case separately. If $n-1 \le r$ then the graph consisting of $K_{n-1}$ and a vertex of degree $\ell+1$ both is $K_{r+1}$-free and has the maximum number of edges among all $n$-vertex graphs avoiding the forbidden property by \cref{thm:ore}. Therefore this graph also is extremal among such graphs that are $K_{r+1}$-free, and $e(G) \le e(K_{n-1})+(\ell+1) = e(\T[n-1][r])+(\ell+1)$, with equality if and only if $G$ is a $K_{n-1} = \T[n-1][r]$ plus a vertex of degree $\ell+1$, with one exceptional graph for each property: $K_{3,1}$, $K_{3,1,1}$, and $K_{3,1,1,1}$, respectively. Now we assume $n \ge r+2$ for parts (a)--(c).
    
    When $r\ge 8$, notice that for all properties we have assumed $n \ge \ell+5 + 4(\ell+4)/(r-4)$ (using the previous paragraph for properties (a)--(c) to assume $n \ge r+2 \ge 10$, and for $k$-connectedness using $\ell+5+4(\ell+4)/(r-4) \le 2\ell+9 = 2(k-2)+9$) with one exception, $(\ell,r,n)=(1,8,10)$. In all cases \cref{thm:kr+1general8} implies that $e(G) \le e(\T[n-1][r])+(\ell+1)$, that $\Gell$ is nonempty, and that equality holds if and only if $G \in \Gell$ for all $r \ge 8$.

    For $4 \le r \le 7$ we note:
    \begin{enumerate}[(a)]
        \item For traceability, $n \ge r+2 \ge 6 \ge \max \{2+\frac{4}{r-3}, 4+\frac{r+5}{2r-2}\} = \max\{3+ \ell + \frac{4(\ell+2)}{r-3},5+\ell+\frac{r+2\ell+7}{2r-2}\}$.
        \item For Hamiltonicity, \[n \ge \begin{cases} 11 & \text{if } r=4\\ 7 & \text{if } 5 \le r \le 7\end{cases} = \max\{3+\frac{8}{r-3}, 7\} \ge \max\{3+ 0 + \frac{4(0+2)}{r-3},5+0+\frac{r+2 \cdot 0+7}{2r-2}\}.\]
        \item For Hamiltonian-connectedness, since $n \ge r+2$, 
            \[
            n \ge \begin{cases} 16 & \text{if } r=4\\
            10 & \text{if } r=5\\
            8 & \text{if } 6\le r \le 7\\\end{cases} = 
            \ceil[\Big]{\max\{3+ 1 + \frac{4(1+2)}{r-3},5+1+\frac{r+2+7}{2r-2}\}}.
            \]
    \end{enumerate}
    For parts (d)--(e) of the theorem we use $n \ge 5k+11 \ge \max\{3+ k + \frac{4(k+2)}{r-3},5+k+\frac{r+2k+7}{2r-2}\}$ and $5k+11 \ge 11 \ge r+2$. For part (e) since $k\ge 1$ we additionally use that for $5 \le r \le 7$, $n \ge 3k+7 \ge 10 \ge \max\{r+2,3+ k + \frac{4(k+2)}{r-3},5+k+\frac{r+2k+7}{2r-2}\}$. 
    
    Therefore for $4 \le r \le 7$ and parts (a)--(f), \cref{thm:kr+1general} with the appropriate value of $\ell$ implies that $e(G) \le e(\T[n-1][r])+(\ell+1)$, that $\Gell$ is nonempty, that if $G \in \Gell$ then equality holds, and that if we have equality then $G \in \Gell[r][n][\ell]$ or $G \in \Hl[r][n][\ell]$. By \cref{lem:exceptions}, in the latter case we have $G \in \Hlhat[r][n][\ell]$. By Propositions \ref{prop:partitetraceable} (part (a)), \ref{prop:partitekHam} (parts (b) and (d)), and \ref{prop:partitekHamconn} (parts (c) and (e)), the graphs in $\Hlhat$ avoid the forbidden properties as required. However, for part (f), recall from \cref{Hfam} that the graphs in $\Hl[r][n][k-2]$ are $k$-connected, so there are no extremal graphs in this family.
\end{proof}

\subsection{$k$-Path Hamiltonicity}\label{subsec:kPH}

In this subsection we prove two theorems, the first proving the best possible bound on the number of edges in a $K_{r+1}$-free graph that is not $k$-path Hamiltonian for $n$ sufficiently large. In the second theorem, with more restrictive lower bounds on $n$, we give a complete description of all extremal graphs.

\begin{thm}\label{theorem:kr+1kpath1}
Let $G$ be an $n$-vertex, $K_{r+1}$-free graph where $r \ge 3$. For every $k \ge 0$, if $G$ is not $k$-path Hamiltonian and \[n \ge \begin{cases} 6k+26 & \text{if } r=3\\ 5k+11 & \text{if } 4 \le r \le 7\\  k+5+ 4(k+4)/(r-4) & \text{if } r \ge 8\end{cases},\quad\text{then }e(G) \leq e(\T[n-1][r]) + (k+1).\] 
The set $\mathcal{J}^k_{n,r}$ is nonempty, and equality holds if $G \in \mathcal{J}^k_{n,r}$.
\end{thm}

\begin{proof}
If $G \in \mathcal{J}^k_{n,r}$ then $G$ is not $k$-path Hamiltonian and $e(G) = e(\T[n-1][r]) + (k+1)$ by \cref{prop:gell} as $\mathcal{J}^k_{n,r} \subseteq \Gell[r][n][k]$. Otherwise the proof follows similarly to that of \cref{theorem:kr+1all}, using $\ell=k$, \cref{thm:kstable}, and \cref{table} to establish the existence of $j$ such that $d_j \le j+\ell$.
\end{proof}

Increasing the lower bounds on $n$ allows us to use \cref{kpathtrace} to characterize the extremal graphs.

\begin{thm}\label{theorem:kr+1kpath2}
Let $G$ be an $n$-vertex, $K_{r+1}$-free graph where $r \ge 3$. For every $k \ge 0$, if $G$ is not $k$-path Hamiltonian and \[n \ge \begin{cases} 6k+26 & \text{if } r=3\\ 6k+2r+3 & \text{if } 4 \le r \le 7\\ k+4(k+4)/(r-4)+2r & \text{if } r \ge 8\end{cases},\quad\text{ then } e(G) \leq e(\T[n-1][r]) + (k+1). \] 
The set $\mathcal{J}^k_{n,r}$ is nonempty, equality holds if and only if $G \in \mathcal{J}^k_{n,r}$ or $G \in \Hlhat$.
\end{thm}

\begin{proof}
The upper bound $e(G) \leq e(\T[n-1][r]) + (k+1)$ holds, with equality if $G \in \mathcal{J}^k_{n,r}$, by \cref{theorem:kr+1kpath1}, and equality holds if $G \in \Hlhat$ by the latter statement of \cref{lem:exceptions} (though some graphs in $\Hlhat$ have fewer vertices than required in this theorem).

For $r \ge 8$ and $n \ge  k+4(k+4)/(r-4)+2r >  k+5+ 4(k+4)/(r-4)$, \cref{thm:kr+1general8} with $\ell=k$ implies that equality holds if and only if $G \in \Gell[r][n][k]$. For $r=3$ and $n \ge 6k+26$, \cref{thm:k4general} with $\ell=k$ implies that equality holds if and only if $G \in \Gell[r][n][k]$.

For $4 \le r \le 7$ and $n \ge 6k+2r+3 \ge \max\{3+ k + \frac{4(k+2)}{r-3},5+k+\frac{r+2k+7}{2r-2},r+2\}$, \cref{thm:kr+1general} with $\ell=k$ implies that if we have equality then $G \in \Gell[r][n][k]$ or $G \in \Hl[r][n][k]$. In the latter case, \cref{lem:exceptions} implies $G \in \Hlhat$. By \cref{prop:kpathham}, these graphs are not $k$-path Hamiltonian.

To complete the characterization of the extremal graphs, suppose that $G \in \Gell[r][n][k]$, and in the neighborhood of the vertex $v$ of degree $k+1$ each part has size at most $(k+2)/2$; that is, $G \in \mathcal{J}^k_{n,r}$. Then $e(G) \leq e(\T[n-1][r]) + (k+1)$ and $G$ is $n$-vertex and $K_{r+1}$-free by definition. 
By \cref{kpathtrace}, 
for a non-$k$-path Hamiltonian graph $G \in \Gell[r][n][k]$ we have $G \in \mathcal{J}^k_{n,r}$. We show that $G$ is not $k$-path Hamiltonian. Let $N$ be the subgraph of $G$ induced by the neighborhood $N(v)$. By \cref{prop:partitetraceable}, $N$ is traceable, i.e. the neighborhood of $v$ contains a path of length $k$ (on $k+1$ vertices). If a cycle $C$ contains both this path and $v$, then it does not contain any other vertices. As $G$ is $K_{r+1}$-free, $N$ lies in $r-1$ parts of $G$, so there is a vertex of the $\T[n-1]$ which is not in $N(v)$ so is not $C$. Hence this path is not contained in a Hamiltonian cycle, and $G$ is not $k$-path Hamiltonian.
\end{proof}

\begin{rem}All of the graphs in $\Hlhat$ for $0 \le \ell \le n-3$ are known to be extremal (and not $\ell$-path Hamiltonian) by \cref{prop:kpathham}, \cref{thm:kr+1general}, and \cref{lem:exceptions}, although some of them have smaller values of $n$ than are included in \cref{theorem:kr+1kpath2}. 
\end{rem}

\subsection{Chorded pancyclicity}\label{subsec:chordedpancyclicity}

A graph on $n\ge 4$ vertices is \emph{chorded pancyclic} if for every $4 \le k\le n$ the graph contains a cycle of length $k$ that has a chord. Sufficiently dense Hamiltonian graphs are chorded pancyclic.

\begin{thm}[Chen, Gould, Gu and Saito \cite{CGGS18}]\label{thm:chordedpancyclic} Let $G$ be a Hamiltonian graph of order $n\ge 4$. If $G$ has at least $n^2/4$ edges, then $G$ is chorded pancyclic, unless $n$ is even and $G = K_{n/2,n/2}$ or $G = K_3\square K_2$, the cartesian product of $K_3$ and $K_2$.\end{thm}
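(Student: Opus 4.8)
The plan is to argue in contrapositive form: assume $G$ is Hamiltonian with $e(G)\ge n^2/4$ and that $G$ is neither $K_{n/2,n/2}$ nor $K_3\square K_2$, and then produce, for every $k$ with $4\le k\le n$, a cycle of length $k$ carrying a chord. Fix once and for all a Hamilton cycle $C=v_1v_2\cdots v_nv_1$. The case $k=n$ is immediate: a chorded $n$-cycle is just $C$ together with any one of its chords, and $C$ has a chord because $e(G)\ge n^2/4>n$ as soon as $n\ge 5$, while for $n=4$ the unique chordless Hamilton cycle is $C_4=K_{2,2}$, which is excluded. So the work is for $4\le k\le n-1$. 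For these lengths I would lean on Bondy's classical pancyclicity theorem, which already guarantees that $G$ contains a cycle of every length in $[3,n]$ (the alternative $G=K_{n/2,n/2}$ being excluded); the only new content is to supply a chord.

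The basic mechanism is a \emph{splicing} principle: if $G$ contains a cycle $D$ of length $k-1$ and a vertex $u\notin V(D)$ adjacent to two consecutive vertices $w,w'$ of $D$, then inserting $u$ into the edge $ww'$ yields a $k$-cycle for which $ww'$ is a chord; a small variant gives the same conclusion if $u$ instead has two neighbours on $D$ at distance $2$. So it suffices, for each $k$ in the range, to find a $(k-1)$-cycle $D$ (by Bondy) and such a vertex $u$ outside it. For $k$ in the bulk of the range one expects a direct counting argument to work: choose $D$ to be, say, a chordless cycle of length $k-1$ when one exists (so that $e(G[V(D)])=k-1$ is as small as possible), bound the number of edges between $V(D)$ and its complement from below using $e(G)\ge n^2/4$ together with the trivial bound $e(G-V(D))\le\binom{n-k+1}{2}$, and apply pigeonhole over the $k-1$ edge-slots of $D$. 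The delicate regimes are $k$ close to $n$, where the counting has too few vertices outside $D$ to bite, and $k$ small, where $\binom{n-k+1}{2}$ swamps $n^2/4$; both require a more hands-on analysis of the chords of $C$, and it is there that the exceptional graphs must be recognized.

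For small $k$ I would use the structural dichotomy forced by the edge bound. Since $e(G)\ge n^2/4\ge\lfloor n^2/4\rfloor=\ex(n,K_3)=e(\T[n][2])$, Tur\'an's theorem (\cref{thm:turan} with $r=2$) gives that either $G$ contains a triangle or $G\cong K_{\lceil n/2\rceil,\lfloor n/2\rfloor}$ with $n$ even; the latter is excluded, so $G$ has a triangle. Since a chorded $4$-cycle is exactly a copy of $K_4-e$ (two triangles sharing an edge), the crux at $k=4$ is to show that a Hamiltonian graph with a triangle, at least $n^2/4$ edges, and no $K_4-e$ must be $K_3\square K_2$; this should follow from the extremal estimate $\ex(n,K_4-e)=\lfloor n^2/4\rfloor+1$ (so such a $G$ is extremal or nearly extremal for the diamond) together with Hamiltonicity and the elementary fact that $K_{n/2,n/2}+e$ always contains a diamond, which pins down the structure. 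Once a $K_4-e$ is in hand, re-running the splicing argument around it settles the remaining small lengths $k=5,6$.

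The step I expect to be the main obstacle is the boundary analysis: proving that the splicing and chord-chasing procedure fails for a length $k$ in $[4,n-1]$ only when $G$ is one of the two named graphs, rather than merely that it succeeds for $k$ away from the extremes. A secondary nuisance is the finite list of small values of $n$ that must be handled by direct inspection, since the quadratic counting only dominates once $n$ is moderately large; in particular $K_3\square K_2$ arises there as a genuine ($n=6$) exception, while $K_{n/2,n/2}$ is an exception for every even $n\ge 4$.
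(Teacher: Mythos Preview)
The paper does not prove \cref{thm:chordedpancyclic}; it is quoted from Chen, Gould, Gu and Saito \cite{CGGS18} and used as a black box in the subsequent corollary on chorded pancyclicity in $K_{r+1}$-free graphs. So there is no ``paper's own proof'' to compare against.

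As for your sketch on its own merits: it is a plan rather than a proof, and you correctly flag the two places where it is incomplete, namely the boundary regimes $k$ near $4$ and $k$ near $n$. A couple of specific concerns. First, your claimed value $\ex(n,K_4-e)=\lfloor n^2/4\rfloor+1$ is not right; the extremal number for the diamond is $\lfloor n^2/4\rfloor$ (the Tur\'an graph $K_{\lceil n/2\rceil,\lfloor n/2\rfloor}$ is diamond-free and any triangle-containing graph with $\ge\lfloor n^2/4\rfloor+1$ edges already has two triangles sharing an edge by a standard supersaturation/stability argument, but the extremal \emph{equality} case is the balanced complete bipartite graph itself). This actually helps you at $k=4$, but the deduction that the only Hamiltonian, triangle-containing, diamond-free graph with $e(G)\ge n^2/4$ is $K_3\square K_2$ still needs a genuine structural argument, not just the extremal number. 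Second, the ``splicing'' counting you outline for generic $k$ requires not merely two neighbours of $u$ on $D$ but two that are consecutive (or at distance~$2$), and bounding the number of cross-edges from below does not by itself force such a pair; you would need a more careful pigeonhole on the positions of neighbours around $D$. The actual proof in \cite{CGGS18} is considerably more intricate than this outline, so if you want a self-contained argument you should consult that paper rather than attempt to reconstruct it.
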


We use \cref{thm:chordedpancyclic} to extend the edge extremal theorem for Hamiltonicity in $K_{r+1}$-free graphs (\cref{theorem:kr+1all}\ref{part:Ham}) to the following analogous statement for chorded pancyclicity.

\begin{cor}\label{cor:chord}
Let $G$ be an $n$-vertex, $K_{r+1}$-free graph where $r \ge 3$. If $G$ is not chorded pancyclic and 
\[n \ge \begin{cases}26 & \text{if } r=3\\ 11 & \text{if } r=4\\ 4 & \text{if } r \ge 5\end{cases},\quad\text{then }e(G) \le e(\T[n-1][r])+1
.\] Equality holds if $G$ is a $\T[n-1][r]$ plus a vertex of degree $1$.
\end{cor}

\begin{proof}
Suppose that 
$e(G) > e(\T[n-1][r])+1$, and we will prove that $G$ is chorded pancyclic. 
By \cref{theorem:kr+1all}\ref{part:Ham}, $G$ is Hamiltonian. If $4 \le n \le r$, we have 
    \[
        e(\T[n-1][r])+1 = e(K_{n-1})+1 = \frac{(n-1)(n-2)}{2}+1 \ge \frac{n^2}{4}.
    \]
    Otherwise, we have $n \ge r$, so $n \ge 5$, and 
\begin{align*}
    n &\ge \frac{9}{2}-\frac{1}{n} \ge \frac{9}{2}-\frac{1}{r}=\frac{4r}{r-2}\left(\frac{r-1}{r}+\frac{r^2-12r+4}{8r^2}\right)\\
    \frac{r-2}{4r}n &\ge \frac{r-1}{r}+\frac{r^2-12r+4}{8r^2},\quad\text{so,}\\
    \left(\frac{r-1}{2r}-\frac{1}{4}\right)n^2 = \frac{r-2}{4r}n^2 &\ge \frac{r-1}{r}n+\frac{r^2-12r+4}{8r^2}n \ge \frac{r-1}{r}n+\frac{r^2-12r+4}{8r}\\
    &= \frac{r-1}{r}n-\frac{r-1}{2r}+\frac{r}{8}-1,
\end{align*}
and by \cref{prop:turanedgecount} and rearranging terms of the previous line, we get
\[
    e(\T[n-1][r])+1 \ge \frac{r-1}{2r}(n-1)^2-\frac{r}{8}+1 = \frac{r-1}{2r}n^2 -\frac{r-1}{r}n + \frac{r-1}{2r}-\frac{r}{8}+1 \ge \frac{n^2}{4}.
\]
In both cases we have $e(G) > e(\T[n-1][r])+1 \ge n^2/4$, so $G$ satisfies the hypotheses of \cref{thm:chordedpancyclic} and is not $K_{n/2,n/2}$.

When $n=6$, 
we have $r \ge 5$ and 
$e(G) > e(\T[5][r])+1 = e(K_5)+1 = 11>9=e(K_3 \square K_2)$. 
Therefore $G\neq K_{n/2,n/2}$ and $G \ne K_3 \square K_2$, so by \cref{thm:chordedpancyclic} $G$ is chorded pancyclic.

If $G$ is a $\T[n-1][r]$ plus a vertex of degree $1$, then $G$ is $K_{r+1}$-free, not chorded pancyclic, and has the required number of edges.
\end{proof}

\section{Edge Density Conditions for $r=2$}\label{sec:r2}

We have seen that, for $r \ge 3$ and sufficiently large $n$, each extremal graph consists of the Tur\'an graph on $n-1$ vertices (a balanced complete $r$-partite graph) plus one low-degree vertex. In this section, we show that the $r=2$ ($K_3$-free) case is quite different. Here the extremal graphs are complete bipartite graphs that are balanced or almost balanced, except for the property of $k$-connectedness. (In bipartite graphs, unlike $r$-partite graphs for $r \ge 3$, paths and cycles must alternate between partite sets.)

\begin{thm}
    For $n \ge 6$, let $G$ be an $n$-vertex, $K_3$-free graph that is not traceable. Then \[e(G) \le e(K_{\floor{n/2}-1,\ceil{n/2}+1}) = \begin{cases}n^2/4-1&\text{if $n$ is even}\\
    (n^2-9)/4&\text{if $n$ is odd}\end{cases},\]
    and this bound is tight as $K_{\floor{n/2}-1,\ceil{n/2}+1}$ is an $n$-vertex, $K_3$-free graph that is not traceable. For $n \ge 8$, $K_{\floor{n/2}-1,\ceil{n/2}+1}$ is the unique extremal graph.
\end{thm}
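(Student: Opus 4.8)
The tightness is immediate from \Cref{prop:partitetraceable}: the graph $K_{\floor{n/2}-1,\ceil{n/2}+1}$ is $K_3$-free, has exactly $(\floor{n/2}-1)(\ceil{n/2}+1)$ edges, which is the displayed value, and is not traceable since its largest part has size $\ceil{n/2}+1>(n+1)/2$.

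For the upper bound, let $G$ be an $n$-vertex, $K_3$-free graph that is not traceable, and let $v$ be a vertex of minimum degree. As in the proof of \Cref{prop:partitetraceable}, any graph $H$ with $\delta(H)\ge(|V(H)|-1)/2$ is traceable (apply Dirac's theorem \cite{Dirac} to $H*K_1$), so $d_G(v)\le\ceil{(n-1)/2}-1$, which equals $n/2-1$ for $n$ even and $(n-3)/2$ for $n$ odd. Since $G-v$ is $K_3$-free on $n-1$ vertices, \Cref{thm:turan} with $r=2$ gives $e(G-v)\le\floor{(n-1)^2/4}$, and hence
\[
 e(G)=e(G-v)+d_G(v)\le\floor{(n-1)^2/4}+\ceil{(n-1)/2}-1=\floor{n^2/4}-1 .
\]
For even $n$ this is exactly $n^2/4-1$, as required. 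For odd $n$ it equals $(n^2-5)/4$, one too many, so I must rule out $e(G)=(n^2-5)/4$. In that case both inequalities are equalities, so $d_G(v)=(n-3)/2$ and, by the uniqueness part of \Cref{thm:turan}, $G-v=K_{(n-1)/2,(n-1)/2}$. As $G$ is $K_3$-free, $N(v)$ is an independent set of $G-v$, hence lies in a single part; since $|N(v)|=(n-3)/2$ it omits exactly one vertex $y_0$ of that part. Then $G$ is bipartite on parts of sizes $(n+1)/2$ and $(n-1)/2$ and equals $K_{(n+1)/2,(n-1)/2}$ with the single edge at $y_0$ removed, which is traceable (it is routine to exhibit a Hamiltonian path of $K_{a,a+1}$, $a\ge2$, avoiding a prescribed edge), a contradiction. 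Hence $e(G)\le(n^2-9)/4$ when $n$ is odd.

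For the uniqueness statement ($n\ge8$), assume $e(G)$ equals the maximum. Revisiting the inequality chain, equality restricts $\delta(G)$ to one or two values and forces $e(G-v)$ to be either $\floor{(n-1)^2/4}$ or $\floor{(n-1)^2/4}-1$. When $e(G-v)=\floor{(n-1)^2/4}$, \Cref{thm:turan} identifies $G-v$ as the balanced complete bipartite graph on $n-1$ vertices. When $e(G-v)=\floor{(n-1)^2/4}-1$ (which arises only for odd $n$), I first prove a stability fact: a $K_3$-free graph on $m\ge6$ vertices with $\floor{m^2/4}-1$ edges is bipartite. This follows because a non-bipartite $K_3$-free graph has a chordless shortest odd cycle, of length at least $5$, and bounding the edges between this cycle and the remaining vertices yields at most $\floor{(m-1)^2/4}+1<\floor{m^2/4}-1$ edges for $m\ge6$; a bipartite graph with $\floor{m^2/4}-1$ edges must then be the balanced complete bipartite graph minus an edge or $K_{\floor{m/2}-1,\ceil{m/2}+1}$. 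In each resulting case $G$ is obtained from this explicitly known $G-v$ by joining $v$ to an independent set; using that $K_3$-freeness forces $N(v)$ to lie within one part of $G-v$ (or, in the ``minus an edge'' cases, to be the two ends of the missing edge), a short check shows $G$ is either $K_{\floor{n/2}-1,\ceil{n/2}+1}$ or a complete bipartite graph on nearly balanced parts with at most two edges deleted, and the latter graphs are traceable via explicit Hamiltonian paths. Hence $G=K_{\floor{n/2}-1,\ceil{n/2}+1}$.

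The main obstacle is the case analysis in the last step: several configurations for $G-v$ and for the placement of $N(v)$ must be handled, and in each ``near-balanced complete bipartite minus at most two edges'' configuration one must produce a Hamiltonian path, which requires the parts to be large enough; this is where the hypothesis $n\ge8$ enters. A minor technical point is the stability lemma on the second-largest edge count of $K_3$-free graphs, which is standard and is needed only for the odd-$n$ uniqueness.
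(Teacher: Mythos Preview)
Your argument is correct, but it is organized differently from the paper's. The paper splits immediately on the global structure of $G$: Case~1 handles non-bipartite $G$ via the standard bound $f(n)\le\lfloor(n-1)^2/4\rfloor+1$ for non-bipartite triangle-free graphs; Case~2 handles subgraphs of $K_{\lfloor n/2\rfloor,\lceil n/2\rceil}$ by explicitly exhibiting Hamiltonian paths after deleting one or two edges; Case~3 handles bipartite $G$ with unbalanced parts by a direct product bound $a(n-a)$. Uniqueness then falls out of Case~3 together with the strictness in Cases~1--2.

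Your route instead deletes a minimum-degree vertex $v$, bounds $e(G-v)$ by Tur\'an, and for odd $n$ kills the one surplus value by identifying $G-v$ and checking traceability. This is closer in spirit to the paper's Section~4 technique for $r\ge 3$ than to its Section~6 argument. The trade-off is visible in the uniqueness step: your method generates more subcases (two possible values of $\delta(G)$ for odd $n$, and for each a structural identification of $G-v$), and you end up needing exactly the two ingredients the paper isolates as Cases~1 and~2, namely the non-bipartite triangle-free bound (your ``stability lemma'' is equivalent to it) and the traceability of $K_{\lfloor n/2\rfloor,\lceil n/2\rceil}$ minus one or two edges. So the substance is the same; the paper's three-way split just packages it more directly and makes the role of $n\ge 8$ (needed for the two-edge-deleted traceability and to exclude the $N(v)=\{a,b\}$ non-bipartite possibility when $|N(v)|=2$) more transparent.
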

\begin{proof}
    Let $G$ be such a graph. By Tur\'an's theorem, every $n$-vertex, $K_3$-free graph $G$ has $e(G) \le \floor{n^2/4}$, with equality if and only if $G \cong K_{\floor{n/2},\ceil{n/2}}$, which is traceable. As $G$ is not traceable, we have $e(G) \le \floor{n^2/4}-1$. For even $n$, the graph $K_{\floor{n/2}-1,\ceil{n/2}+1} = K_{n/2-1,n/2+1}$ is not traceable because it is bipartite with part sizes differing by two. This graph has $e(K_{\floor{n/2}-1,\ceil{n/2}+1}) = n^2/4-1$ edges, completing the proof of the tight upper bound for every even $n \ge 2$.
    
    In the remainder of the proof, we prove the upper bound for odd $n \ge 7$ and the uniqueness of the extremal graphs for all $n\ge 8$. We consider three cases: $G$ is not bipartite, $G$ is a subgraph of $K_{\floor{n/2},\ceil{n/2}}$, and $G$ is bipartite with part sizes differing by more than one.
    
    \begin{case} $G$ is not bipartite

    Let $f(n)$ be the maximum number of edges in an $n$-vertex, non-bipartite, $K_3$-free graph. Note that 
    \[
    e(K_{\floor{n/2}-1,\ceil{n/2}+1}) = 
    \begin{cases}
    (n^2-9)/4 = (n-1)^2/4+1   &\text{for } n=7\\
    (n^2-9)/4 > (n-1)^2/4+1  &\text{for odd } n \ge 9\\
    n^2/4-1 > (n-1)^2/4+1 &\text{for even } n \ge 6,\\
    \end{cases}
    \]
    so it is enough to show that $f(n) \le (n-1)^2/4+1$, which is Exercise 1.1.5 in \cite{Zhao23}.
    \end{case}
    
    \begin{case} $G$ is a subgraph of $K_{\floor{n/2},\ceil{n/2}}$

    If $n$ is even, then $K_{\floor{n/2},\ceil{n/2}}$ is Hamiltonian. Deleting any edge of a Hamiltonian graph yields a traceable graph. Therefore $G$ is obtained by deleting at least two edges of $K_{\floor{n/2},\ceil{n/2}}$, so $e(G) \le e(K_{\floor{n/2},\ceil{n/2}}) - 2 < n^2/4 - 1$, and no extremal graphs exist in this case with even $n$.

Now suppose $n$ is odd. The graph $K_{\floor{n/2},\ceil{n/2}}=K_{(n-1)/2,(n+1)/2}$ is traceable, so $G$ is a proper subgraph of $K_{(n-1)/2,(n+1)/2}$. We show that for $n \ge 5$ deleting any edge from $K_{(n-1)/2,(n+1)/2}$ yields a traceable graph, so it is necessary to delete at least two edges to obtain a non-traceable graph, and $e(G) \le e(K_{(n-1)/2,(n+1)/2}) - 2 = (n^2-9)/4$. Then we show that for $n \ge 7$ this inequality is strict, so no extremal graphs exist in this case.

Let $G$ be the graph obtained by deleting one edge from $K_{(n-1)/2,(n+1)/2}$, which is independent of the choice of edge. Without loss of generality, label the endpoints of the deleted edge with $1$ and $n-1$, where $1$ is the vertex in the larger part, and $n-1$ is the vertex in the smaller part. Label the other vertices in the larger part with the odd numbers from $3$ to $n$ and the other vertices in the smaller part with the even numbers from $2$ to $n-3$. Then $(1,2,\ldots, n-1, n)$ is a Hamiltonian path of $G$, using the fact that $n \ge 4$ (so the pairs $\set{1,2}$ and $\set{n-2,n-1}$ are edges and not $\set{1,n-1}$). Thus $G$ is traceable.

For $n \ge 7$, we show that deleting any two edges from $K_{(n-1)/2,(n+1)/2}$ yields a traceable graph, so $e(G) \le e(K_{(n-1)/2,(n+1)/2}) - 3 < (n^2-9)/4$. If the two deleted edges are not incident, then without loss of generality label them $\set{1, n-3}$ and $\set{3,n-1}$, where $1$ and $3$ are in the larger part and $n-3$ and $n-1$ are in the smaller part. Label the other vertices in the larger part with the odd numbers from $5$ to $n$ and the other vertices in the smaller part with the even numbers from $2$ to $n-5$. Using the fact that $n \ge 6$, the pairs $\set{1,2}$, $\set{n-4,n-3}$, $\set{3,4}$, and $\set{n-2,n-1}$ all are not $\set{1,n-3}$ or $\set{3,n-1}$ so are edges, so $(1,2,\ldots,n-1,n)$ is a Hamiltonian path of $G$. 

Otherwise, the two deleted edges are incident. If the deleted edges are incident at a vertex in the larger part, then without loss of generality label the deleted edges $\set{1,n-3}$ and $\set{1,n-1}$. Label the other vertices in the larger part with the odd numbers from $3$ to $n$ and the other vertices in the smaller part with the even numbers from $2$ to $n-5$. Then $(1,2,\ldots,n-1,n)$ is a Hamiltonian path of $G$. If the deleted edges are incident at a vertex in the smaller part, then without loss of generality label the deleted edges $\set{2,n-2}$ and $\set{2,n}$. Label the other vertices in the larger part with the odd numbers from $1$ to $n-4$ and the other vertices in the smaller part with the even numbers from $4$ to $n-1$. Again $(1,2,\ldots, n-1,n)$ is a Hamiltonian path of $G$.
    \end{case}
    
    \begin{case} $G$ is bipartite with part sizes differing by more than one

    Now $G$ is a subgraph of $K_{a,n-a}$ for some integer $a$ in $1 \le a \le \frac{n-2}{2}$. Thus \[e(G) \le e(K_{a,n-a}) = a(n-a) \le 
    \begin{cases}
        \frac{n-3}{2}\cdot \frac{n+3}{2} = (n^2-9)/4 &\text{for odd }n\\
        \frac{n-2}{2}\cdot \frac{n+2}{2} = n^2/4 -1 &\text{for even }n.\\
    \end{cases}\]
    The graph $K_{\floor{n/2}-1,\ceil{n/2}+1}$ is not traceable and achieves this upper bound in both the even and odd cases. Equality in the bound holds if and only if $G = K_{a,n-a}$ and $a$ is $(n-3)/2$ or $(n-2)/2$, so $K_{\floor{n/2}-1,\ceil{n/2}+1}$ is the unique extremal graph in this case. \qedhere
    \end{case}
\end{proof}

\begin{thm}\label{thm:r2Ham} 
    Let $n \ge 3$. If $G$ is an $n$-vertex, $K_3$-free graph that is not Hamiltonian, then \[e(G) \le e(K_{\ceil{n/2}-1,\floor{n/2}+1}),\]
    and this bound is tight as $K_{\ceil{n/2}-1,\floor{n/2}+1}$ is an $n$-vertex, $K_3$-free graph that is not Hamiltonian.
\end{thm}
\begin{proof}
    Let $G$ be such a graph. By Tur\'an's theorem, every $n$-vertex, $K_3$-free graph $G$ has $e(G) \le \floor{n^2/4}$, with equality if and only if $G \cong K_{\floor{n/2},\ceil{n/2}}$. 
    
    If $n$ is odd then $K_{\floor{n/2},\ceil{n/2}}$ is not Hamiltonian so gives the maximum number of edges. 
    
    If $n \ge 4$ is even then $K_{\floor{n/2},\ceil{n/2}}$ is Hamiltonian, so $e(G)=n^2/4$ is not possible. The graph $K_{n/2-1,n/2+1}$ is not Hamiltonian, so $e(K_{n/2-1,n/2+1}) = n^2/4-1$ gives the maximum number of edges.
\end{proof}

\begin{thm}
    Let $n \ge 3$. If $G$ is an $n$-vertex, $K_3$-free graph that is not Hamiltonian-connected, then \[e(G) \le e(K_{\floor{n/2},\ceil{n/2}}),\] and 
    equality holds if and only if $G = K_{\floor{n/2},\ceil{n/2}}$.
\end{thm}
\begin{proof}
    Let $G$ be such a graph. By Tur\'an's theorem, every $n$-vertex, $K_3$-free graph $G$ has $e(G) \le \floor{n^2/4}$, with equality if and only if $G \cong K_{\floor{n/2},\ceil{n/2}}$.
    
    If $n$ is odd then $K_{\floor{n/2},\ceil{n/2}}$ is not Hamiltonian and so not Hamiltonian-connected so gives the maximum number of edges.
    
    If $n \ge 4$ is even then $K_{n/2,n/2}$ is not Hamiltonian-connected, as there is no Hamiltonian path between any two vertices in the same partite set, so gives the maximum number of edges.
\end{proof}

\begin{thm}
    Let $0 \le k \le n-3$. Let $G$ be a $n$-vertex, $K_3$-free graph. If $G$ is not $k$-path Hamiltonian then \[e(G) \le e(K_{\ceil{n/2}-1,\floor{n/2}+1}),\]
    and this bound is tight as $K_{\ceil{n/2}-1,\floor{n/2}+1}$ is an $n$-vertex, $K_3$-free graph that is not $k$-path Hamiltonian.
\end{thm}
\begin{proof}
    Let $G$ be such a graph. By Tur\'an's theorem, every $n$-vertex, $K_3$-free graph $G$ has $e(G) \le \floor{n^2/4}$, with equality if and only if $G \cong K_{\floor{n/2},\ceil{n/2}}$.
    
    If $n$ is odd then $K_{\floor{n/2},\ceil{n/2}}$ is not Hamiltonian and so not $k$-path Hamiltonian so gives the maximum number of edges.

    If $n$ is even then $K_{n/2,n/2}$ is $(n-2)$-path Hamiltonian (see \cite{CK68, Kronk}) so is $k$-path Hamiltonian for every $k \le n-3$, 
    so $e(G)=n^2/4$ is not possible. As $K_{n/2-1,n/2+1}$ is not Hamiltonian and so not $k$-path Hamiltonian, $e(K_{n/2-1,n/2+1})=n^2/4-1$ gives the maximum number of edges.
\end{proof}

\begin{thm}\label{thm:r2kham}
    Let $1 \le k \le n-3$. Let $G$ be an $n$-vertex, $K_3$-free graph. If $G$ is not $k$-Hamiltonian, then 
    \[
        e(G) \le e(K_{\floor{n/2},\ceil{n/2}}),
    \]
    and this bound is tight as $K_{\floor{n/2},\ceil{n/2}}$ is an $n$-vertex, $K_3$-free graph that is not $k$-Hamiltonian.
\end{thm}

Note that $0$-Hamiltonicity is equivalent to Hamiltonicity, so the case $k=0$ is addressed in \cref{thm:r2Ham} instead of \cref{thm:r2kham}. When $n$ is even, the answers are different for $k=0$ and $k \ge 1$.

\begin{proof}
    Let $G$ be such a graph. By Tur\'an's theorem, every $n$-vertex, $K_3$-free graph $G$ has $e(G) \le \floor{n^2/4}$, with equality if and only if $G \cong K_{\floor{n/2},\ceil{n/2}}$.

    If $n$ is odd then $K_{\floor{n/2},\ceil{n/2}}$ is not $k$-Hamiltonian, as the removal of a vertex from the partite set of size $\floor{n/2}$ results in a non-Hamiltonian graph, so gives the maximum number of edges. If $n$ is even then $K_{n/2,n/2}$ is not $k$-Hamiltonian, as the removal of any vertex results in a non-Hamiltonian graph, so gives the maximum number of edges. 
\end{proof}

\begin{thm}
    Let $1 \le k \le n-2$. If $G$ is an $n$-vertex, $K_3$-free graph that is not $k$-Hamiltonian-connected, then \[e(G) \le e(K_{\floor{n/2},\ceil{n/2}}),\] and equality holds if and only if $G = K_{\floor{n/2},\ceil{n/2}}$.
\end{thm}
\begin{proof}
    Let $G$ be such a graph. By Tur\'an's theorem, every $n$-vertex, $K_3$-free graph $G$ has $e(G) \le \floor{n^2/4}$, with equality if and only if $G \cong K_{\floor{n/2},\ceil{n/2}}$. Let $0 \le s < k$. 
    
    If $n$ is odd then $K_{\floor{n/2},\ceil{n/2}}$ is not Hamiltonian and so not Hamiltonian-connected by definition. If a graph is $k$-Hamiltonian-connected, then it is also Hamiltonian-connected, using $S = \emptyset$ in the definition. 
    For $1 \le k \le n-2$, 
    the graph $K_{\floor{n/2},\ceil{n/2}}$ is not $k$-Hamiltonian-connected and gives the maximum number of edges.
    
    If $n \ge 4$ is even then $K_{n/2,n/2}$ is not Hamiltonian-connected, as there is no Hamiltonian path between any two vertices in the same partite set. For $1 \le k \le n-2$, 
    the graph $K_{\floor{n/2},\ceil{n/2}}$ is not $k$-Hamiltonian-connected and gives the maximum number of edges.
\end{proof}

\begin{lem}[Lemma 7.2.7 in \cite{West20}]\label{minconnectivity} Deletion of an edge reduces connectivity by at most 1. If a graph $G$ is $b$-connected, and $G'$ is obtained by deleting at most $c$ edges from $G$, then $G'$ is $(b-c)$-connected.
\end{lem}

\begin{thm}
    Let $1 \le k \le n-2$. If $G$ is an $n$-vertex, $K_3$-free graph that is not $k$-connected, then 
    \begin{itemize}
        \item for $k > \floor{n/2}$ we have $e(G) \le e(K_{\floor{n/2},\ceil{n/2}})$, and
        \item for $1 \le k \le \floor{n/2}$ we have $e(G) \le e(K_{\floor{(n-1)/2},\ceil{(n-1)/2}})+k-1$,
    \end{itemize}
and these bounds are tight as $K_{\floor{n/2},\ceil{n/2}}$ and $K_{\floor{(n-1)/2},\ceil{(n-1)/2}}$ plus a vertex of degree $k-1$ are both $n$-vertex, $K_3$-free graphs that are not $k$-connected for $k > \floor{n/2}$ and $1 \le k \le \floor{n/2}$, respectively.
\end{thm}

\begin{proof}
    For some $1 \le k \le n-2$, let $G$ be such an $n$-vertex graph. Notice that the removal of any part of a bipartite graph results in a disconnected graph and so when $k > \floor{n/2}$ the graph $K_{\floor{n/2},\ceil{n/2}}$ is not $k$-connected. By Tur\'an's theorem, $K_{\floor{n/2},\ceil{n/2}}$ is the unique edge-maximal graph among all $K_3$-free graphs. 
    
    We now consider when $1 \le k \le \floor{n/2}$. In this case, we show that the maximum number of edges is achieved by $K_{\floor{(n-1)/2},\ceil{(n-1)/2}}$ plus a vertex of degree $k-1$. Notice that this graph is not $k$-connected as the removal of all $k-1$ neighbors from the special vertex of degree $k-1$ results in a disconnected graph. We show that all other $n$-vertex graphs that are $K_3$-free and not $k$-connected have at most $e(K_{\floor{(n-1)/2},\ceil{(n-1)/2}})+k-1$ edges.

    \begin{case} $G$ is not bipartite

    Let $f(n)$ be the maximum number of edges in an $n$-vertex, non-bipartite, $K_3$-free graph. Note that 
    for $k\ge 3$ we have \[
    e(K_{\floor{(n-1)/2},\ceil{(n-1)/2}})+k-1 = 
    \begin{cases}
    (n-1)^2/4 +k-1 > (n-1)^2/4+1  &\text{for odd } n \\
    (n^2-2n)/4 +k-1> (n-1)^2/4+1 &\text{for even } n,\\
    \end{cases}
    \]
    and $f(n) \le (n-1)^2/4+1$ by Exercise 1.1.5 in \cite{Zhao23}, so all extremal graphs are bipartite for $k \ge 3$.
    
    For $k=2$, we show that the graphs achieving $f(n)$ edges are $k$-connected, and so we must have $e(G)<\floor{(n-1)^2/4}+1 \le e(K_{\floor{(n-1)/2},\ceil{(n-1)/2}})+k-1$. Since $G$ is not bipartite and is $K_3$-free, $G$ contains an odd cycle of length $\ell \ge 5$ (so $n \ge 5$). As shown in \cite{hardmath}, $$e(G)=\floor{(n-\ell)^2/4}+2(n-\ell)+\ell=(n-1)^2/4+1$$ only when $G$ consists of a $K_{\floor{(n-\ell)/2},\ceil{(n-\ell)/2}}$ plus an $\ell$-cycle with boundary of size $2(n-\ell)$ where $\ell=5$.
    
    Since $G$ is $K_3$-free, each vertex in the parts of $K_{\floor{(n-5)/2},\ceil{(n-5)/2}}$ has exactly two neighbors in the $5$-cycle. The removal of any vertex from the parts of $K_{\floor{(n-5)/2},\ceil{(n-5)/2}}$ or the $5$-cycle results in a connected graph. Then $G$ is $2$-connected, a contradiction, so we must have $e(G)<\floor{(n-1)^2/4}+1 \le e(K_{\floor{(n-1)/2},\ceil{(n-1)/2}})+1$.

    In the case that $k=1$, the graph $G$ is disconnected. Let $C_1, \dots, C_i$ be the connected components of $G$ with sizes $c_1 \ge  c_2 \ge \cdots \ge c_i$, respectively. Then $i\ge 2$, $\sum_{j=1}^i c_j =n$, and $c_j \le n-1$ for all $j\le i$. By Tur\'an's theorem, the number of edges in $G$ is maximized when each component $C_j$ is a $\T[c_j][2]$. Among such graphs $G$ where every component is a $T_2(c_j)$, notice that moving two vertices from $C_j$ to $C_1$ results in the deletion of $\floor{c_j/2}+\floor{(c_j-1)/2}$ edges and the addition of $\ceil{c_1/2}+\ceil{(c_1+1)/2}$ edges for a net increase in the number of edges since $c_1 \ge c_j$, so the extremal graphs have $c_i \le \cdots \le c_2 \le 1$. If $C_j$ consists of a single isolated vertex, then the removal of the vertex from $C_j$ and the addition of the vertex to $C_1$ strictly increases the number of edges in $G$. The number of edges in $G$ is then maximized when $c_1=n-1$ and $c_2=1$. Then $G$ consists of a $K_{\floor{(n-1)/2},\ceil{(n-1)/2}}$ plus an isolated vertex and $e(G) = e(K_{\floor{(n-1)/2},\ceil{(n-1)/2}})\le (n-1)^2/4$.
    \end{case}

    \begin{case} $G$ is a subgraph of  $K_{\floor{n/2},\ceil{n/2}}$

    The graph $K_{\floor{n/2},\ceil{n/2}}$ is $k$-connected for $1 \le k \le \floor{n/2}$ as the removal of any $k-1$ vertices would result in a complete bipartite graph with nonempty parts. By \cref{minconnectivity}, deleting any $\floor{n/2}-k$ edges from $K_{\floor{n/2},\ceil{n/2}}$ results in a $k$-connected bipartite graph, thus $G$ is obtained by deleting at least $\floor{n/2}-k+1$ edges of $K_{\floor{n/2},\ceil{n/2}}$, so \begin{align*}
        e(G) &\le e(K_{\floor{n/2},\ceil{n/2}})-\floor{n/2}+k-1 \\
        &= e(K_{\floor{(n-1)/2},\ceil{(n-1)/2}})+k-1. 
    \end{align*}    
    \end{case}

    \begin{case} $G$ is bipartite with part sizes differing by more than one
    
    In this case, $G$ is a subgraph of $K_{a,n-a}$ for some integer $a$ in $1 \le a \le \frac{n-2}{2}$. Thus $e(G) \le e(K_{a,n-a}) = a(n-a)$. We show $e(G) \le e(K_{\floor{(n-1)/2},\ceil{(n-1)/2}})+k-1$ by considering two cases depending on how $k$ compares to $a$. 
    
    For $k>a$, notice that the removal of any part results in a disconnected graph and so we have $K_{a,n-a}$ is not $k$-connected. For $a < (n-2)/2$, we have $2 < n-2a$, so \begin{align*}
        2n-4a= 2(n-2a) &< (n-2a)^2 =n^2-4an+4a^2\\
        4an-4a^2 &< n^2-2n+4a\\
        a(n-a) = an-a^2 &< (n^2-2n)/4+a
    \end{align*} and so, since $a \le k-1$, 
    \begin{align*}
        e(K_{a,n-a})=a(n-a) &< \begin{cases}
            (n-1)^2/4 +k-1  &\text{for odd } n \\
            (n^2-2n)/4 +k-1 &\text{for even } n \\
        \end{cases}\\
        &= e(K_{\floor{(n-1)/2},\ceil{(n-1)/2}})+k-1.
    \end{align*} For $a=(n-2)/2$, we have $n$ is even and $k=n/2$. Then $e(G) = (n^2-4)/4 = e(K_{\floor{(n-1)/2},\ceil{(n-1)/2}})+k-1$. 

    For $1 \le k \le a$, the graph $K_{a,n-a}$ is $a$-connected. By \cref{minconnectivity} at least $a-k+1$ edges need to be removed from $G$ to yield a graph that is not $k$-connected and so \begin{align*}
        e(G) &\le a(n-a)-(a-k+1) = -a^2+(n-1)a+k-1\\
        &< e(K_{\floor{(n-1)/2},\ceil{(n-1)/2}})+k-1
    \end{align*} edges, where the last inequality follows from maximizing the concave-down quadratic function of $a$ by substituting in $a=\floor{(n-2)/2}$.\qedhere
    \end{case}
\end{proof}

\section{Clique Density Conditions}\label{sec:clique}

In this section we extend our edge density conditions to clique density conditions via an observation about the extremal graphs. These results are proven in \cref{subsec:cliquekr+1}. In \cref{subsec:clique}, we use a similar observation to obtain new results on clique density conditions for Hamiltonicity and related properties in not necessarily $K_{r+1}$-free graphs, as immediate consequences of known edge density conditions.

A \emph{$t$-clique} is a set of $t$ vertices all pairs of which are adjacent. We write $\k(G)$ for the number of $t$-cliques in a graph $G$.

\subsection{Clique density conditions in graphs}\label{subsec:clique}

We determine clique density conditions sufficient to imply Hamiltonicity-like properties in graphs (which are not necessarily $K_{r+1}$-free). First we need the following preliminary definitions.

\begin{defn}
	The \emph{colex} (or \emph{colexicographic}) \emph{order} on finite subsets of $\N$, denoted by $<_C$, is defined by $A <_C B$ if and only if $\max(A\symd B) \in B$.

    The \emph{colex graph on $m$ edges}, denoted by $\cC(m)$, has edge set consisting of the first $m$ pairs of natural numbers in colex order. Its vertex set is the union of all of the edges, i.e., the subset of $\N$ needed to support the edges and avoid isolated vertices.
\end{defn}

The colex graph equivalently is the graph on $m$ edges consisting of the largest complete graph $K_p$ that can fit on $m$ edges together with one additional vertex whose degree is the remaining number of edges, $m-\binom{p}{2}$. It follows from the Kruskal-Katona Theorem that the colex graph on $m$ edges has the maximum number of $t$-cliques among all graphs on at most $m$ edges. For a proof of this implication, see \cite{KR23}.

\begin{thm}[Corollary of Kruskal-Katona Theorem \cite{Katona, Kruskal}]\label{thm:KK}
    For every $t \ge 2$, if $G$ is a graph on at most $m$ edges, then $\k(G) \le \k(C(m))$.
\end{thm}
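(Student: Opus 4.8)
The plan is to derive the statement from the Kruskal--Katona theorem in its shadow-minimization form, via the standard translation between cliques and set-system shadows. Throughout, identify the edges of a graph with $2$-element subsets of its vertex set, and for a family $\mathcal{F}$ of $j$-element sets write $\partial\mathcal{F}$ for the lower shadow (the $(j-1)$-sets contained in some member of $\mathcal{F}$), iterated to $\partial^{i}\mathcal{F}$.

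First I would let $\mathcal{K}$ be the family of vertex sets of $t$-cliques of $G$, so $|\mathcal{K}|=\k(G)$. The key structural observation is that every $(t-1)$-subset of a $t$-clique is a $(t-1)$-clique, so $\partial\mathcal{K}$ consists of $(t-1)$-cliques; iterating, $\partial^{t-2}\mathcal{K}$ is a family of edges of $G$, so $|\partial^{t-2}\mathcal{K}|\le m$. Next I would pass to the colex-optimal competitor: let $\mathcal{C}$ be the initial segment of colex order on $t$-subsets of $\N$ with $|\mathcal{C}|=|\mathcal{K}|$. Two facts about colex order are needed: (i) the Kruskal--Katona theorem, that an initial colex segment has the smallest shadow among families of its size; and (ii) the shadow of an initial colex segment is again an initial colex segment. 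Combining (i), (ii), and the monotonicity of $\partial$ under inclusion, an induction on $i$ yields both $|\partial^{i}\mathcal{C}|\le|\partial^{i}\mathcal{K}|$ and that $\partial^{i}\mathcal{C}$ is an initial colex segment; in particular $\partial^{t-2}\mathcal{C}$ is an initial colex segment of $2$-subsets of size at most $m$, hence is contained in the first $m$ pairs of colex order, which is exactly $E(\cC(m))$.

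To finish: every $2$-subset of every member of $\mathcal{C}$ lies in $\partial^{t-2}\mathcal{C}\subseteq E(\cC(m))$, so each member of $\mathcal{C}$ induces a complete graph in $\cC(m)$, i.e.\ is a $t$-clique of $\cC(m)$. Therefore $\k(G)=|\mathcal{K}|=|\mathcal{C}|\le\k(\cC(m))$, and taking $G=\cC(m)$ shows the bound is attained.

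The main obstacle is fact (ii), that the lower shadow of an initial colex segment is again an initial colex segment, together with the bookkeeping needed to iterate Kruskal--Katona down from $t$-sets to $2$-sets while controlling sizes; this is the one place requiring real care (or a precise citation to the cascade / combinatorial-number-system formulation of Kruskal--Katona), while the rest is elementary. An alternative that sidesteps fact (ii) uses the Lov\'asz form ($|\mathcal{F}|=\binom{x}{t}$ forces $|\partial\mathcal{F}|\ge\binom{x}{t-1}$) to obtain $\binom{x}{2}\le m$ and hence $\k(G)=\binom{x}{t}$ bounded by a quantity depending only on $m$; but then matching that quantity to $\k(\cC(m))=\binom{p}{t}+\binom{q}{t-1}$, where $\binom{p}{2}+q=m$, needs an extra convexity estimate, so I would favor the initial-segment argument above.
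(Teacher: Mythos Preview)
Your argument is correct and is the standard derivation of this fact from the Kruskal--Katona theorem. The paper, however, does not give its own proof of this statement: it is stated as a known corollary of the Kruskal--Katona theorem, with the sentence ``For a proof of this implication, see \cite{KR23}'' pointing the reader elsewhere. So there is no paper proof to compare against; your write-up supplies precisely the kind of argument the paper outsources. The one point you flag as the main obstacle, fact (ii) that the shadow of an initial colex segment is again an initial colex segment, is indeed the crux, and together with the one-step Kruskal--Katona bound it yields the iterated bound $|\partial^{t-2}\mathcal{C}|\le|\partial^{t-2}\mathcal{K}|$ exactly as you describe.
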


One immediate consequence of \cref{thm:KK} is as follows.
\begin{lem}\label{thm:generalcolex}
    Let $\mathcal{A}$ be a set of $n$-vertex graphs. Let $m = \max\set{e(G):G\in\mathcal{A}}$. Let $G \in \mathcal{A}$. For every $t \ge 2$,
    \[
        \k(G) \le \k(\cC(m)),
    \]
    and if $\cC(m) \in \mathcal{A}$ then this upper bound is tight as equality holds if $G \cong \cC(m)$.
\end{lem}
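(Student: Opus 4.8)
The plan is to obtain this as essentially a one-line consequence of the Kruskal--Katona corollary \cref{thm:KK}. The key observation is that every graph $G \in \mathcal{A}$ has at most $m$ edges, since $m$ is defined to be the maximum of $e(G)$ over all $G \in \mathcal{A}$.

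First I would fix an arbitrary $G \in \mathcal{A}$ and an integer $t \ge 2$, and record that $e(G) \le m$ by the definition of $m$. Then I would apply \cref{thm:KK} with this value of $m$ to the graph $G$, which has at most $m$ edges, and conclude $\k(G) \le \k(\cC(m))$. This is precisely the asserted inequality, and since $t \ge 2$ was arbitrary it holds for every $t \ge 2$.

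For the tightness claim, I would invoke the hypothesis that $\cC(m) \in \mathcal{A}$: this says that $\cC(m)$ is itself one of the graphs over which the maximum is being considered, so it is a legitimate choice of $G$, and trivially $\k(\cC(m)) = \k(\cC(m))$. Hence the bound is attained whenever $G \cong \cC(m)$.

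There is no real obstacle here; the entire content sits in \cref{thm:KK}, and the sole purpose of the extra hypothesis $\cC(m) \in \mathcal{A}$ is to guarantee that the (unconditionally valid) upper bound $\k(\cC(m))$ is actually realized within the family $\mathcal{A}$. I would add a brief remark that $\cC(m)$ may have fewer than $n$ vertices, in which case one pads it with the appropriate number of isolated vertices; this changes neither the number of edges nor, for $t \ge 2$, the number of $t$-cliques, and for the non-Hamiltonicity-type properties considered in this paper it keeps the graph in $\mathcal{A}$ --- but for the abstract statement this is already subsumed by assuming $\cC(m) \in \mathcal{A}$ outright.
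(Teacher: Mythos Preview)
Your proposal is correct and matches the paper's own proof essentially line for line: note $e(G)\le m$ from the definition of $m$, apply \cref{thm:KK}, and use the hypothesis $\cC(m)\in\mathcal{A}$ for tightness. The extra remark about padding with isolated vertices is a reasonable aside but is not present (and not needed) in the paper's version.
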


\begin{proof}
    Let $G \in \mathcal{A}$. Then $e(G) \le m$ by the definition of $m$. By \cref{thm:KK}, for all $t \ge 2$, we have $\k(G) \le \k(C(m))$. The condition $\cC(m) \in \mathcal{A}$ ensures that this upper bound is achieved.
\end{proof}

Recall that the extremal graphs in Theorem \ref{thm:ore} and the analogous theorems for the other properties implied by \cref{thm:kstable} all consist of a complete graph together with one more vertex of a given degree, so they are colex graphs. Thus, as a consequence of these theorems and \cref{thm:generalcolex}, we obtain clique conditions sufficient to imply the same properties.

\begin{cor}\label{cor:ore}
    Let $G$ be a graph on $n$ vertices. Let $t \ge 2$.
    \begin{enumerate}[(a)]
        \item If $G$ is not traceable, then $\k(G) \le \k(\cC(\binom{n-1}{2}))$. Equality holds if $G$ is a $\cC(\binom{n-1}{2})\cong K_{n-1}$ plus an isolated vertex.
        \item\label{part:cororeham} If $G$ is not Hamiltonian, then $\k(G) \le \k(\cC(\binom{n-1}{2}+1))$. For $n \ge 2$, equality holds if $G \cong \cC(\binom{n-1}{2}+1)$, which is a $K_{n-1}$ plus a pendant edge.
        \item If $G$ is not Hamiltonian-connected, then $\k(G) \le \k(\cC(\binom{n-1}{2}+2))$. For $n \ge 3$, equality holds if $G \cong \cC(\binom{n-1}{2}+2)$, which is a $K_{n-1}$ plus a vertex of degree $2$.
        \item If $G$ is not $k$-path Hamiltonian for some $0 \le k \le n-3$, then $\k(G) \le \k(C(\binom{n-1}{2}+k+1))$. 
        Equality holds if $G \cong \cC(\binom{n-1}{2}+k+1)$.
        \item If $G$ is not $k$-Hamiltonian for some $0 \le k \le n-3$, then $\k(G) \le \k(C(\binom{n-1}{2}+k+1))$. 
        Equality holds if $G \cong \cC(\binom{n-1}{2}+k+1)$.
        \item If $G$ is not $k$-Hamiltonian-connected for some $1 \le k \le n-3$, then $\k(G) \le \k(C(\binom{n-1}{2}+k+1))$. 
        Equality holds if $G \cong \cC(\binom{n-1}{2}+k+1)$.
        \item If $G$ is not $k$-connected for some $1 \le k \le n-1$, then $\k(G) \le \k(C(\binom{n-1}{2}+k-1))$. 
        Equality holds if $G \cong \cC(\binom{n-1}{2}+k-1)$.
    \end{enumerate}
\end{cor}

\begin{proof}
    Let $\A$ be the set of $n$-vertex graphs which are not traceable, Hamiltonian, Hamiltonian-connected, $k$-path Hamiltonian, $k$-Hamiltonian, $k$-Hamiltonian-connected, or $k$-connected for parts (a) through (g), respectively. By \cref{table}, \cref{thm:kstableedge}, and \cref{rem:colexproperties}, the values of $m$ (as defined in \cref{thm:generalcolex}) are $\binom{n-1}{2}$, $\binom{n-1}{2}+1$, $\binom{n-1}{2}+2$, $\binom{n-1}{2}+k+1$, $\binom{n-1}{2}+k+1$, $\binom{n-1}{2}+k+1$, and $\binom{n-1}{2}+k-1$, respectively. In each case we have $\cC(m) \in \mathcal{A}$. Then \cref{thm:generalcolex} implies that $\k(G) \le \k(\cC(m))$ with equality if $G \cong \cC(m)$.
\end{proof}

Parts (a) and (b) of \cref{cor:ore} also were special cases of Theorems 1.6 and 1.7 of Chakraborti and Chen \cite{ChakrabortiChen}. Part (d) is closely related to a corollary of a theorem of F{\"u}redi, Kostochka, and Luo \cite{FKL19} on the maximum number of $t$-cliques in graphs of minimum degree at least $d > \ell$ which do not have the property that every linear forest on $\ell$ edges is contained in a Hamiltonian cycle.

\subsection{Clique density conditions in $K_{r+1}$-free graphs}\label{subsec:cliquekr+1}

Now we determine clique conditions in $K_{r+1}$-free graphs. We first state preliminary definitions and a theorem of Frohmader determining the maximum number of $t$-cliques in $m$-edge, $K_{r+1}$-free graphs.

\begin{defn}\label{def:colexturan}
	The \emph{$r$-partite colex order} is the restriction of the colex order to the pairs of integers $\set{\set{i,j} \in \binom{\N}{2} : i \not\equiv j \mod{r}}$. 

    The \emph{$r$-partite colex Tur\'an graph on $m$ edges}, denoted by $\CT$, has edge set consisting of the first $m$ pairs of natural numbers $i \not\equiv j \mod{r}$ in the $r$-partite colex order. Its vertex set is the union of all of the edges, i.e., the subset of $\N$ needed to support the edges and avoid isolated vertices.
\end{defn}

If we define $n$ by $e(\T[n][r])\le m < e(\T[n+1][r])$, then the $r$-partite colex Tur\'an graph on $m$ edges, $\CT$, has the property that $\T[n][r] \subseteq \CT \subset \T[n+1][r]$. The graph $\CT$ consists of the largest Tur\'an graph possible on $m$ edges together with one more vertex whose degree is determined by $m$ and whose neighbors lie in the appropriate partite sets. Radcliffe and Uzzell \cite{RU18} gave the following graph-theoretic form of Frohmader's theorem, which also relies on the rainbow Kruskal-Katona theorem of Frankl, F\"{u}redi, and Kalai \cite{FFK88}.

\begin{thm}[Frohmader \cite{Frohmader}] \label{thm:EdgeZykov}
	If $G$ is a $K_{r+1}$-free graph with at most $m$ edges and $t \ge 2$, then $\k(G) \leq \k\bigl(\CT\bigr)$.
\end{thm}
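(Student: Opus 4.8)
The plan is to deduce this from Frohmader's theorem on the face numbers of flag complexes, together with the colored Kruskal--Katona theorem of Frankl, F\"uredi, and Kalai. To $G$ we attach its clique complex $\Delta(G)$, the flag complex whose faces are the cliques of $G$; its $f$-vector records $f_1(\Delta(G)) = e(G)$ and $f_{t-1}(\Delta(G)) = \k(G)$, and because $G$ is $K_{r+1}$-free we have $\dim \Delta(G) \le r-1$.

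By Frohmader's theorem, there is a completely balanced simplicial complex $\Gamma$ with $f(\Gamma) = f(\Delta(G))$. Since the $f$-vector pins down the dimension, $\dim \Gamma \le r-1$, and a completely balanced complex of dimension at most $r-1$ carries a proper vertex coloring with at most $r$ colors; thus $\Gamma$ is $r$-colorable, with $f_1(\Gamma) = e(G) \le m$ and $f_{t-1}(\Gamma) = \k(G)$.

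Now I would invoke the characterization, due to Frankl, F\"uredi, and Kalai, of the $f$-vectors of $r$-colorable complexes --- the colored analogue of Kruskal--Katona. It implies that among all $r$-colorable complexes with a prescribed number of edges, the number of faces in every dimension is simultaneously maximized by a unique ``color-compressed'' complex on that many edges. The $r$-partite colex order of \cref{def:colexturan} is set up precisely so that the color-compressed complex on $m'$ edges is the clique complex of $\CT[m']$: its $1$-skeleton is the edge set of $\CT[m']$ --- the first $m'$ pairs $\{i,j\}$ with $i \not\equiv j \pmod{r}$ in colex order --- and the compressed complex turns out to be flag. Since $\CT[e(G)]$ is a subgraph of $\CT$, we conclude $\k(G) = f_{t-1}(\Gamma) \le f_{t-1}\bigl(\text{clique complex of } \CT[e(G)]\bigr) \le f_{t-1}\bigl(\text{clique complex of } \CT\bigr) = \k(\CT)$.

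The substance of the proof is this last identification. One must unwind the color-compression (shifting) operation of Frankl, F\"uredi, and Kalai, verify that the resulting extremal $r$-colorable complex on a given number of edges has edge set exactly the corresponding initial segment of the $r$-partite colex order, and check that this complex is flag, so that its number of $(t-1)$-faces genuinely equals the number of $t$-cliques of a graph. The $r$-partite colex order was introduced so that these steps are essentially bookkeeping, but carrying them out is the real work. An alternative that avoids both Frohmader and Frankl--F\"uredi--Kalai is to argue directly with graphs: repeatedly replace an edge of $G$ by an earlier pair in the $r$-partite colex order whenever doing so keeps the graph $K_{r+1}$-free and does not decrease the number of $t$-cliques, and show the process terminates at $\CT$; this is self-contained but amounts to re-proving the special case of the colored Kruskal--Katona theorem that is needed.
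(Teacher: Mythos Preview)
The paper does not prove this theorem at all: it is quoted as a preliminary result attributed to Frohmader and used as a black box in \cref{thm:generalcolexturan}. So there is no ``paper's proof'' to compare against; the question is only whether your sketch constitutes a valid derivation.

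Your high-level route is the standard one. Passing from $G$ to its clique complex, invoking Frohmader's theorem to replace it by a balanced complex with the same $f$-vector, and then applying the Frankl--F\"uredi--Kalai inequalities to bound $f_{t-1}$ in terms of $f_1$ is exactly how one deduces the graph-theoretic statement from Frohmader's result on flag $f$-vectors. You are also right that the crux is the identification of the FFK extremal $r$-colorable complex on $m$ edges with the clique complex of $\CT$: one must check that the color-compressed complex has edge set equal to the initial segment of the $r$-partite colex order and that it is flag, so that its $(t-1)$-faces really are the $t$-cliques of $\CT$. You flag this honestly as ``the real work'' but do not carry it out, so what you have is a correct plan rather than a proof. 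One small additional point you glide over: FFK gives Macaulay-type inequalities between consecutive $f_i$'s, so to pass from $f_1$ to $f_{t-1}$ for $t>3$ you must iterate and check that the successive bounds are simultaneously realized by the same compressed complex; this is routine once the identification is in hand, but it is part of the bookkeeping. Your alternative direct-shifting approach on graphs would also work and is indeed essentially a specialization of the colored Kruskal--Katona argument to $1$-skeleta.
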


\cref{thm:EdgeZykov} immediately implies that when an $r$-partite colex Tur\'{a}n graph maximizes the number of edges in a family $\mathcal{A}$ of $n$-vertex, $K_{r+1}$-free graphs, the same colex Tur\'{a}n graph maximizes the number of $t$-cliques for all $t \ge 2$. \cref{thm:generalcolexturan} gives a general form of this observation.

\begin{thm}\label{thm:generalcolexturan}
    Let $\mathcal{A}$ be a set of $n$-vertex, $K_{r+1}$-free graphs. Let $m = \max\set{e(G):G\in \mathcal{A}}$. Let $G \in \mathcal{A}$. For every $t \ge 2$,
    \[
        \k(G) \le \k(\CT),
    \]
    and if $\CT\in\mathcal{A}$ then this upper bound is tight as equality holds if $G \cong \CT$.
\end{thm}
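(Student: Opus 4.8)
The plan is to obtain \cref{thm:generalcolexturan} as an immediate corollary of Frohmader's theorem (\cref{thm:EdgeZykov}), in complete analogy with the way \cref{thm:generalcolex} follows from the Kruskal--Katona corollary \cref{thm:KK}. First I would use the definition of $m = \max\{e(G) : G \in \mathcal{A}\}$: since $G \in \mathcal{A}$, this maximum is an upper bound on $e(G)$, so $G$ is a $K_{r+1}$-free graph on at most $m$ edges. Applying \cref{thm:EdgeZykov} to $G$ with the given value of $t$ then yields $\k(G) \le \k(\CT)$, which is exactly the asserted inequality.

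For tightness I would simply invoke the hypothesis $\CT \in \mathcal{A}$: it guarantees that the $r$-partite colex Tur\'an graph on $m$ edges is itself an $n$-vertex, $K_{r+1}$-free graph lacking property $A$, hence a legitimate member of the family $\mathcal{A}$, so equality in the bound is attained whenever $G \cong \CT$. This completes the argument.

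I do not expect any genuine obstacle here: all of the difficulty is absorbed into \cref{thm:EdgeZykov}. The only points requiring a little care are bookkeeping ones --- ensuring that $\mathcal{A}$ is nonempty so that $m$ is well-defined, and keeping straight that $\CT$ in the statement denotes the colex Tur\'an graph on exactly $m$ edges as in \cref{def:colexturan} (which in the intended applications has $n$ vertices, as is in fact forced by the requirement $\CT \in \mathcal{A}$). Once these conventions are in place the proof is two lines.
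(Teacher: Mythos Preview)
Your proposal is correct and matches the paper's proof essentially line for line: use $G\in\mathcal{A}$ to get $e(G)\le m$ and $K_{r+1}$-freeness, apply Frohmader's theorem (\cref{thm:EdgeZykov}) to obtain $\k(G)\le\k(\CT)$, and invoke the hypothesis $\CT\in\mathcal{A}$ for tightness. The paper's argument is exactly this two-line deduction, mirroring the proof of \cref{thm:generalcolex}.
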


\begin{proof}
	Let $G \in \mathcal{A}$. Then $G$ is $K_{r+1}$-free by the definition of $\mathcal{A}$, and $e(G) \le m$ by the definition of $m$. By \cref{thm:EdgeZykov}, for all $t \ge 2$, we have $\k(G) \le \k(\CT)$. The condition $\CT\in\mathcal{A}$ ensures that this upper bound is achieved.
\end{proof}

To apply \cref{thm:generalcolexturan} to the families $\mathcal{A}$ of $n$-vertex, $K_{r+1}$-free graphs which are not traceable, Hamiltonian, Hamiltonian-connected, $k$-path Hamiltonian, $k$-Hamiltonian, $k$-Hamiltonian-connected, $k$-connected, or chorded pancyclic, we use the following observation.

\begin{obs}\label{obs:colexTuran} For every $r$ and $m$, the $r$-partite colex Tur\'{a}n graph $\CT$ on $m$ edges is $\Gs[r][n][\ell] \in \mathcal{J}^\ell_{n,r} \subseteq \Gell$, where $n = \abs{V(\CT)}$ and $\ell = \delta(\CT)-1$.
\end{obs}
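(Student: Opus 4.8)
The plan is to read off the structure of $\CT$ directly from the $r$-partite colex order and match it against the defining description of $\Gs[r][n][\ell]$. First I would set $n = \abs{V(\CT)}$ and recall (from the discussion following \cref{def:colexturan}) that, choosing $N$ with $e(\T[N][r]) \le m < e(\T[N+1][r])$, one has $\T[N][r] \subseteq \CT \subset \T[N+1][r]$. If $m = e(\T[N][r])$ then $\CT = \T[N][r]$, so $n = N$, and writing $\T[N][r]$ as $\T[N-1][r]$ together with its highest-index vertex — which is adjacent to every vertex of $\T[N-1][r]$ outside one smallest part, hence to a copy of $\T[\ell+1][r-1]$ — already displays $\CT = \Gs[r][n][\ell]$. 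So the main case is $e(\T[N][r]) < m < e(\T[N+1][r])$, in which $n = N+1$.

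In this case let $x$ be the largest-indexed vertex of $\CT$. The first key step is that the first $e(\T[n-1][r])$ admissible pairs — those $\set{i,j}$ with $i \not\equiv j \pmod r$ — in the $r$-partite colex order are exactly the edges of $\T[n-1][r]$ on $\nset{n-1}$: colex order sorts pairs by their larger element, so every pair inside $\nset{n-1}$ precedes every admissible pair meeting $\set{n,n+1,\dots}$, and the admissible pairs inside $\nset{n-1}$ number $\binom{n-1}{2} - \sum_{c} \binom{|c|}{2} = e(\T[n-1][r])$, where the residue classes $c$ of $\nset{n-1}$ mod $r$ are precisely the parts of $\T[n-1][r]$ (being balanced, of sizes $q$ or $q+1$ with $n-1 = qr+s$). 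Hence $\CT - x = \T[n-1][r]$ and the remaining edges of $\CT$ are all incident to $x$; setting $\ell := m - e(\T[n-1][r]) - 1 \ge 0$ gives $m = e(\T[n-1][r]) + \ell+1$, which is the edge count in the definition of $\Gs[r][n][\ell]$. The second key step is to identify $N_{\CT}(x)$: since admissible pairs with larger element $n$ are ordered by their smaller element, $N_{\CT}(x)$ is the set of the $\ell+1$ smallest elements of $\nset{n-1}$ whose residue mod $r$ is not that of $n$. Thus $N_{\CT}(x)$ misses the entire residue class of $n$, which a short count shows has size $q$, hence is a smallest part of $\T[n-1][r]$; and since each block $\set{kr+1,\dots,kr+r}$ contributes one element to each of the $r-1$ admissible residue classes, the $\ell+1$ chosen elements are distributed among those $r-1$ classes with part sizes differing by at most one, so the subgraph of $\CT$ induced on $N_{\CT}(x)$ is complete $(r-1)$-partite with parts as balanced as possible, i.e.\ isomorphic to $\T[\ell+1][r-1]$. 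Since $\CT$ is $K_{r+1}$-free (being a subgraph of the $r$-partite graph $\T[n][r]$) and has $n$ vertices, this matches the defining description of $\Gs[r][n][\ell]$, so $\CT = \Gs[r][n][\ell] \in \Gell[r][n][\ell]$. Finally $d_{\CT}(x) = \ell+1 \le n-1-\ceil{(n-1)/r} = \delta(\T[n-1][r])$, so $x$ is a vertex of minimum degree of $\CT$; this is how $\ell$ is recovered from $\delta(\CT)$.

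I do not expect a genuine obstacle here: the whole argument is bookkeeping once one records that the parts of the underlying Tur\'an graph are the residue classes mod $r$ and that $\N$ breaks into consecutive blocks of length $r$, each meeting every class once. The points requiring care are the boundary cases — $m$ equal to a Tur\'an number (handled above; in particular $m = e(\T[n-1][r])$ gives $\CT = \T[n-1][r]$, identified with $\Gs[r][n][-1]$ under the usual convention of allowing an isolated vertex) — and the residue bookkeeping when $n-1 \equiv r-1 \pmod r$, where the residue of $n$ is $0$ rather than $s+1$; in every case one must line up the computed $N_{\CT}(x)$ exactly with the clause that it be disjoint from a smallest part of $\T[n-1][r]$.
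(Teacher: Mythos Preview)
The paper does not prove this statement: it is recorded as an observation with no argument, to be read off directly from comparing the definitions of $\CT$ (\cref{def:colexturan}) and $\Gs[r][n][\ell]$. Your proposal is a correct and careful unpacking of exactly that comparison --- identifying the parts of the underlying Tur\'an graph with the residue classes mod $r$, checking that the first $e(\T[n-1][r])$ admissible pairs give $\T[n-1][r]$, and verifying that the neighborhood of the top vertex is a balanced $(r-1)$-partite graph missing a smallest part. So your approach is the paper's approach, just written out in full.

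Two minor remarks. First, the statement as printed says $\ell = \delta(\CT)$, but as your own computation shows, the added vertex has degree $\ell+1$, so one should read this as $\ell = \delta(\CT)-1$; you handle this correctly. Second, your parenthetical about the boundary case ``$m = e(\T[n-1][r])$ gives $\CT = \T[n-1][r]$, identified with $\Gs[r][n][-1]$'' is slightly off: by definition $\CT$ has no isolated vertices, so in that case $\abs{V(\CT)} = n-1$, not $n$, and the observation identifies it as $\Gs[r][n-1][\ell']$ for the appropriate $\ell'$ --- which is exactly what you do correctly in your first paragraph when you set $n=N$ and peel off the top vertex of $\T[N][r]$.
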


We now deduce that some of the $n$-vertex, $K_{r+1}$-free graphs which maximize the numbers of edges also maximize the numbers of $t$-cliques for all $t \ge 2$.

\begin{cor}\label{cor:cliques}
    Let $G$ be an $n$-vertex, $K_{r+1}$-free graph where $r \ge 3$. Let $t \ge 2$.
\begin{enumerate}[(a)]
\item If $G$ is not traceable and \[n \ge \begin{cases}20 & \text{if }r=3\\ 1 & \text{if } r\ge4 
\end{cases},\quad\text{then } \k(G) \leq \k(\Gs[r][n][-1]).\] Equality holds if $G \cong \Gs[r][n][-1]$, which is $\T[n-1][r]$ plus an isolated vertex.

\item\label{part:Hamclique} If $G$ is not Hamiltonian (or, for $n\ge 4$, if $G$ is not chorded pancyclic) and \[n \ge \begin{cases}26 & \text{if }r=3\\ 11 & \text{if } r=4\\ 
2 & \text{if } r \ge 5\end{cases},\quad\text{then } \k(G) \leq \k(\Gs[r][n][0]).\]
Equality holds if $G \cong \Gs[r][n][0]$, which is $\T[n-1][r]$ plus a vertex of degree $1$.

\item If $G$ is not Hamiltonian-connected and \[n \ge \begin{cases} 32 & \text{if } r=3\\ 16 & \text{if } r=4\\ 10 & \text{if } r=5\\
4 & \text{if } r \ge 6\end{cases},\quad\text{then }\k(G) \leq \k(\Gs[r][n][1]).\] 
Equality holds if $G \cong \Gs[r][n][1]$, which is $\T[n-1][r]$ plus a vertex of degree $2$ whose neighbors are adjacent.

\item If $G$ is not $k$-path Hamiltonian and \[n \ge \begin{cases} 6k+26 & \text{if } r=3\\ 5k+11 & \text{if } 4 \le r \le 7\\  k+5+ 4(k+4)/(r-4) & \text{if } r \ge 8\end{cases},\quad\text{then } \k(G) \leq \k(\Gs[r][n][k]).\] 
Equality holds if $G \cong \Gs[r][n][k]$.

\item If $G$ is not $k$-Hamiltonian and \[n \ge \begin{cases} 6k+26 & \text{if } r=3\\ 5k+11 & \text{if } 4 \le r \le 7\\  k+5+ 4(k+4)/(r-4) & \text{if } r \ge 8\end{cases},\quad\text{then }\k(G) \leq \k(\Gs[r][n][k]).\]
Equality holds if $G \cong \Gs[r][n][k]$.

\item If $G$ is not $k$-Hamiltonian-connected and \[n \ge \begin{cases} 6k+26 & \text{if } r=3\\ 5k+11 & \text{if } r=4\\ 3k+7 & \text{if } 5 \le r \le 7\\ k+5+4(k+4)/(r-4) & \text{if } r \ge 8\end{cases},\quad\text{then }\k(G) \leq \k(\Gs[r][n][k]).\] 
Equality holds if $G \cong \Gs[r][n][k]$.

\item If $G$ is not $k$-connected and \[n \ge \begin{cases} 6k+14 & \text{if } r=3\\ 5k+1 & \text{if } 4 \le r \le 7\\ 2k+5 & \text{if } r \ge 8\end{cases},\quad\text{then }\k(G) \leq \k(\Gs[r][n][k-2]).\] 
Equality holds if $G \cong \Gs[r][n][k-2]$.
\end{enumerate}
\end{cor}

\begin{proof}
    Apply \cref{thm:generalcolexturan} with $\A$ being the set of $n$-vertex, $K_{r+1}$-free graphs that do not have the relevant property (such as traceability), as required. The values of $m$ for each property were determined by \cref{theorem:kr+1all}\ref{part:trace}--\ref{part:kconn}, \cref{theorem:kr+1kpath1}, and \cref{cor:chord}.  
    In each case $\CT\in\mathcal{A}$ by \cref{obs:colexTuran} and Propositions \ref{prop:gell is extremal} and \ref{prop:gell}.
\end{proof}

We similarly maximize $t$-cliques in $K_{r+1}$-free graphs satisfying a P\'{o}sa-like degree condition.
\begin{cor}\label{cor:posaclique}
Let $\ell \ge -1$, $r \ge 3$, and \[n \ge \begin{cases} 6\ell+26 & \text{if } r=3\\ \max\{3+ \ell + \frac{4(\ell+2)}{r-3},5+\ell+\frac{r+2\ell+7}{2r-2}\} & \text{if } 4 \le r \le 7\\ \ell+5+\frac{4(\ell+4)}{r-4} & \text{if } r \ge 8\end{cases}\] be integers, or $(\ell,r,n)=(1,8,10)$. Let $G$ be an $n$-vertex, $K_{r+1}$-free graph with degrees $d_1 \le \cdots \le d_n$. If there is an integer $j$ in $1 \le j \le (n-1-\ell)/2$ such that $d_j \le j+\ell$, then, for all $t \ge 2$, $\k(G) \le \k(\Gs)$. Equality holds if $G \cong \Gs$.
\end{cor}

\begin{proof}
    Apply \cref{thm:generalcolexturan} with $\A$ being the set of $n$-vertex, $K_{r+1}$-free graphs with degrees $d_1 \le \cdots \le d_n$ having some integer $j$ in $1 \le j \le (n-1-\ell)/2$ such that $d_j \le j+\ell$. The value of $m$ was determined by \cref{thm:degcondsummary}. We have $\CT\in\mathcal{A}$ by \cref{obs:colexTuran} and \cref{prop:gell is extremal}.
\end{proof}

\section{Open Problems}\label{sec:open}

We suggest several directions for future research. 

Letting $\ex_{\Ham}(n,F)$ denote the maximum number of edges in a non-Hamiltonian, $n$-vertex, $F$-free graph, and letting $\ex_{\Ham}(n,H,F)$ denote the maximum number of copies of a graph $H$ in a non-Hamiltonian, $n$-vertex, $F$-free graph, suggests an abundance of open problems, as we have addressed only the case $F=K_{r+1}$ and $H = K_t$. One interesting variant is as follows.

\begin{qu}What is the maximum number of $t$-cliques among the $m$-edge, $K_{r+1}$-free, non-Hamiltonian graphs?\end{qu}

Similarly, relaxing the non-Hamiltonicity condition to an upper bound on the circumference, or relaxing the non-traceability condition to a $P_k$-free condition, yields the following two open questions.

\begin{qu}\label{q:cycle}What is the maximum number of edges (or $t$-cliques) in $n$-vertex, $K_{r+1}$-free graphs of circumference at most $k \le n-1$?\end{qu} 

\begin{qu}\label{q:path}What is the maximum number of edges (or $t$-cliques) in $n$-vertex, $\set{K_{r+1},P_k}$-free graphs for $k \le n-1$?\end{qu}

\cref{q:cycle} may be quite challenging, based on the difficulty of the weaker problem studied in \cite{Araujo}. If we remove the $K_{r+1}$-free condition from Questions \ref{q:cycle} and \ref{q:path}, then the resulting questions have been answered, first asymptotically by Luo \cite{Luo} and then exactly by Chakraborti and Chen \cite{ChakrabortiChen}. Moreover, Luo \cite{Luo} determined asymptotically the maximum number of $t$-cliques among $n$-vertex graphs which are $2$-connected and do not contain long cycles or long paths. Adding a $2$-connectedness condition to our problem would force the extremal graphs to be non-Hamiltonian without having a pendant edge.

\begin{qu}\label{qu:2conn}What is the maximum number of edges in $n$-vertex, $K_{r+1}$-free, non-Hamiltonian graphs which are $2$-connected?\end{qu}

Kang and Pikhurko \cite{KangPikhurko} determined the maximum number of edges in $n$-vertex, $K_{r+1}$-free graphs which are not $r$-partite. The extremal graphs all are Hamiltonian as a consequence of \cref{theorem:kr+1all}\ref{part:Ham}.

\begin{qu}What is the maximum number of edges in $n$-vertex, non-Hamiltonian, $K_{r+1}$-free graphs which are not $r$-partite?\end{qu}

As discussed in the introduction (see \cref{fig:tofl}), the extremal graphs for the problem of maximizing the number of edges in $n$-vertex, $K_{r+1}$-free, non-Hamiltonian graphs can be viewed as the result of attaching a pendant edge to the extremal graph for the problem of maximizing the number of edges in $(n-1)$-vertex, $K_{r+1}$-free graphs. Similarly in Ore's theorem (part 2 of \cref{thm:ore}) the extremal graph among $n$-vertex non-Hamiltonian graphs is obtained by attaching a pendant edge to the extremal graph among $(n-1)$-vertex graphs. One might wonder how general this phenomenon is. The $r=2$ case in \cref{sec:r2} is an example where it does not hold. Recent work in \cite{MNNRW23} shows that for every graph $H$ and sufficiently large $n$ and $r$, $\T$ maximizes the number of copies of $H$ among $n$-vertex, $K_{r+1}$-free graphs. Motivated by this result, we pose a similar question when an additional property such as non-Hamiltonicity is required.
\begin{qu}
    For which graphs $H \ne K_t$ does the graph $\G[r]$ maximize the number of copies of $H$ among $n$-vertex, $K_{r+1}$-free, non-Hamiltonian graphs for sufficiently large $n$ and $r$?
\end{qu}

Finally, we did not attempt to extend Theorems \ref{theorem:kr+1all} and \ref{theorem:kr+1kpath1} to maximize the number of edges for smaller values of $n$. The graph $\Gs$ is not always extremal for these values. \cref{table}, \cref{thm:kstableedge}, and \cref{rem:colexproperties} address $n \le r+1$, as the extremal graphs all are $K_{r+1}$-free. It would also be interesting to characterize the extremal graphs for the problem of maximizing the number of $t$-cliques.

\begin{qu}
    Which graphs not in $\Gell$, if any, achieve the maximum number of $t$-cliques among $n$-vertex, $K_{r+1}$-free, non-Hamiltonian graphs? Among graphs which are not traceable, Hamiltonian, $k$-path Hamiltonian, $k$-Hamiltonian, $k$-Hamiltonian-connected, or $k$-connected?
\end{qu}

\section*{Acknowledgments}

The authors thank V\'{a}clav Chv\'{a}tal and Linda Lesniak for clarification of the discussion of stable properties in \cite{BC76}, Jamie Radcliffe for comments which improved the presentation of the paper, and Zhanar Berikkyzy, Kirsten Hogenson, and Jessica McDonald for helpful discussions of stable properties. The first author is supported in part by the NSF Grant DMS-2402312. The second author is supported in part by Simons Foundation Grant MP-TSM-00002688.

\bibliographystyle{plainurl}
\bibliography{references}
\end{document}